\documentclass[reqno]{amsart}
\usepackage{amsfonts}
\usepackage{amssymb}
\usepackage{amsmath}
\usepackage{amsthm}
\usepackage{amscd}
\usepackage{mathrsfs}
\usepackage{graphicx, color}
\usepackage{url}
\usepackage[stable]{footmisc}

\theoremstyle{plain}
\newtheorem{theorem}{Theorem}[section]
\newtheorem{lemma}[theorem]{Lemma}
\newtheorem{corollary}[theorem]{Corollary}
\newtheorem{proposition}[theorem]{Proposition}

\theoremstyle{definition}

\newtheorem{remark}[theorem]{Remark}

\allowdisplaybreaks[1]

\newcommand{\RotE}{\mathrm{Rot} \hspace{0.1em} \mathbb{E}^{2}}

\begin{document}

\title[R-equivalence classes of $\RotE$-colorings of torus knots]{R-equivalence classes of $\RotE$-colorings \\ of torus knots}
\author{Mai Sato}
\address{Department of Mathematics, Tsuda University, 2-1-1 Tsuda-machi, Kodaira-shi, Tokyo 187-8577, Japan}
\email{md2461sm@gm.tsuda.ac.jp}

\subjclass[2020]{57K12, 57K10}
\keywords{quandle, coloring, equivalence class}

\begin{abstract}
We introduce a new equivalence relation, named R-equivalence relation, on the set of colorings of an oriented knot diagram by a quandle.
We determine the R-equivalence classes of colorings of a diagram of a torus knot by a quandle, called $\RotE$, under a certain condition.
\end{abstract}

\maketitle

\section{Introduction}
\label{sec:introduction}

A quandle, introduced by Joyce \cite{J1982}, is an algebraic system whose axioms have close relationships with Reidemeister moves for oriented knot diagrams.
Although it is called a distributive groupoid instead of a quandle, the same notion was also introduced by Matveev \cite{M1984}.
For each quandle $X$, we may consider $X$-colorings of an oriented knot diagram.
It is well-known that a Reidemeister move naturally relates the $X$-colorings of the original diagram to the ones of the diagram obtained from the original one by the Reidemeister move, one-to-one onto.
Therefore, the number of $X$-colorings gives us an invariant of oriented knots.
We call this number the $X$-coloring number.

$X$-coloring numbers are enhanced by Cater et al.\ \cite{CJKLS2003} as follows.
They defined (co)homology groups of a quandle and an weight of each $X$-coloring associated with a $2$-cocycle of $X$.
They showed that the weights of $X$-colorings, which are related to each other by a finite sequence of Reidemeister moves, associated with a $2$-cocycle are the same.
Therefore, associated with each $2$-cocycle of $X$, the multiset consisting of weights of the $X$-colorings also gives us an invariant of oriented knots.
We call it a quandle cocycle invariant.

We note that we may deform an oriented knot diagram to itself by a finite sequence of Reidemeister moves.
On the other hand, the $X$-colorings of the diagram related to each other by the sequence are not always the same.
Therefore, we say that $X$-colorings of a given diagram to be R-equivalent if they are related to each other by a finite sequence of Reidemeister moves in this paper.
Obviously, R-equivalence is an equivalence relation on the set of $X$-colorings of a diagram.
Since the weights of $X$-colorings being R-equivalent to each other are the same, determining the R-equivalence classes is allowed to be considered as an enhancement of quandle cocycle invariants.

There are several studies on relationship between $X$-colorings of a given diagram.
For example, Ge et al.\ introduced the notion of an equivalence relation on the set of $m$-colorings ($m > 1$), which are a type of $X$-coloring, of a link diagram \cite{G2014}.
Cho and Nelson introduced the notion of a quandle coloring quiver on the set of $X$-colorings of a link diagram \cite{N2019}.
We note that R-equivalence is quite different from those notions.

In this paper, we mainly focus on the following quandle to study R-equivalence.
As mentioned in Section \ref{sec:equivalence_of_coloring}, the set consisting of the rotational transformations of the Euclidean plane $\mathbb{E}^{2}$ is equipped with a quandle structure.
We let $\RotE$ denote this quandle.
Inoue studied $\RotE$-colorings of a diagram of a torus knot \cite{I2015}.
In light of his work, we determine the R-equivalence classes of $\RotE$-colorings of a diagram of a torus knot under a certain condition (Theorem \ref{main_thm}).

This paper is organized as follows.
In Section \ref{sec:equivalence_of_coloring}, we quickly review notions of a quandle and a coloring, and introduce the notion of R-equivalence.
In Section \ref{sec:RotE-coloring_of_torus_knots}, we recall Inoue's work \cite{I2015} on $\RotE$-colorings of a diagram of a torus knot.
In Section \ref{sec:main_theorem}, we introduce two significant deformations of the diagram and state the main claim (Theorem \ref{main_thm}) of this paper.
To prove Theorem \ref{main_thm}, we see two necessary conditions for $\RotE$-colorings of the diagram to be R-equivalent (Theorems \ref{necessary_condition_1} and \ref{necessary_condition_2}) in Section \ref{sec:necessary_conditions_for_R-equivalence}.
Then, in Section \ref{sec:proof_of_main_theorem}, we prove Theorem \ref{main_thm} in light of those necessary conditions.

\section{R-equivalence for quandle colorings}
\label{sec:equivalence_of_coloring}

In this section, we will introduce an equivalence relation, named R-equivalence relation, on the set of quandle colorings of an oriented knot diagram.
To do it, we start with reviewing the definitions of a quandle and a quandle coloring briefly.
More details can be found in \cite{K2017}, for example.

A \emph{quandle} is a set $X$ equipped with a binary operation $\ast : X \times X \to X$ satisfying the following three axioms:
\begin{itemize}
 \item[Q1.] For any $x \in X$, $x \ast x = x$.
 \item[Q2.] There exists a binary operation ${\ast}^{-1} : X \times X \to X$ satisfying $\left( x \ast y \right) \, {\ast}^{-1} \, y = \left( x \, {\ast}^{-1} \, y \right) \ast y = x$ for any $x, y \in X$.
 \item[Q3.] For any $x ,y, z \in X$, $\left( x \ast y \right) \ast z = \left( x \ast z \right) \ast \left( y \ast z \right)$.
\end{itemize}
For example, the cyclic group $\mathbb{Z} / n \mathbb{Z}$ equipped with a binary operation $\ast$ given by 
\[
x \ast y = 2y - x
\]
becomes a quandle for each positive integer $n \geq 3$.
We call it the \emph{dihedral quandle} of order $n$.

In this paper, we are mainly interested in the following quandle.
Define a binary operation $\ast$ on $\mathbb{C} \times \mathrm{U}(1)$ by
\[
 \left( z, e^{\theta \sqrt{-1}} \right) \ast \left( w, e^{\eta \sqrt{-1}} \right)
 = \left( \left( z - w \right)e^{\eta \sqrt{-1}} + w, e^{\theta \sqrt{-1}} \right).
\]
Then it is routine to check that $\ast$ satisfies the axioms of a quandle with a binary operation ${\ast}^{-1}$ on $\mathbb{C} \times \mathrm{U}(1)$ given by
\[
 \left( z, e^{\theta \sqrt{-1}} \right) \, {\ast}^{-1} \, \left( w, e^{\eta \sqrt{-1}} \right)
 =\left( \left( z - w \right)e^{- \eta \sqrt{-1}} + w, e^{\theta \sqrt{-1}} \right).
\]
We thus have a quandle $\left( \mathbb{C} \times \mathrm{U}(1), \ast \right)$.
We note that the rotational transformation about $w$ by angle $\eta$ maps $z$ to $\left( z - w \right)e^{\eta \sqrt{-1}} + w$.
Therefore, we can regard this quandle as a quandle consisting of the rotational transformations of $\mathbb{C}$, identifying $(z, e^{\theta \sqrt{-1}}) \in \mathbb{C} \times \mathrm{U}(1)$ with the rotational transformation about $z$ by angle $\theta$.
Furthermore, identifying $\mathbb{C}$ with the Euclidean plane $\mathbb{E}^{2}$, we refer to $\mathbb{C} \times \mathrm{U}(1)$ as $\RotE$ in the remaining.

Let $X$ be a quandle and $D$ an oriented knot diagram.
An \emph{$X$-coloring} of $D$ is a map $\mathscr{C}$ from the set of all arcs of $D$ to $X$ satisfying the condition depicted in Figure \ref{coloring_condition} at each crossing of $D$.
Here, $x$, $y$ and $x \ast y$ denote elements of $X$ assigned to correspondent arcs by $\mathscr{C}$ and we call them the \emph{colors} of the arcs.
We note that a constant map from the set of all arcs of $D$ to $X$ obviously satisfies the condition for an $X$-coloring.
We thus call it a \emph{trivial $X$-coloring} of $D$.
The colorings of a diagram of the trefoil knot by the dihedral quandle of order $3$ are depicted in Figure \ref{fox_coloring}, for example.
We note that $\mathscr{C}_{0}$, $\mathscr{C}_{1}$ and $\mathscr{C}_{2}$ are the trivial colorings in the figure.
\begin{figure}[htbp]
 \centering
 \includegraphics[scale=0.5]{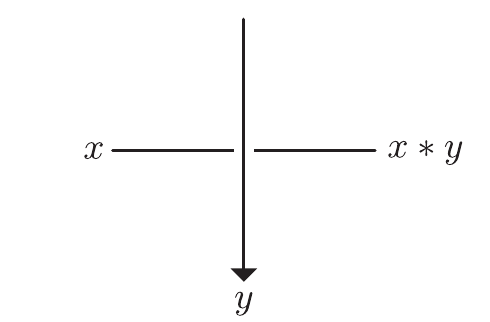}
 \caption{The condition for an $X$-coloring.}
 \label{coloring_condition}
\end{figure}
\begin{figure}[htbp]
 \centering
 \includegraphics[scale=0.5]{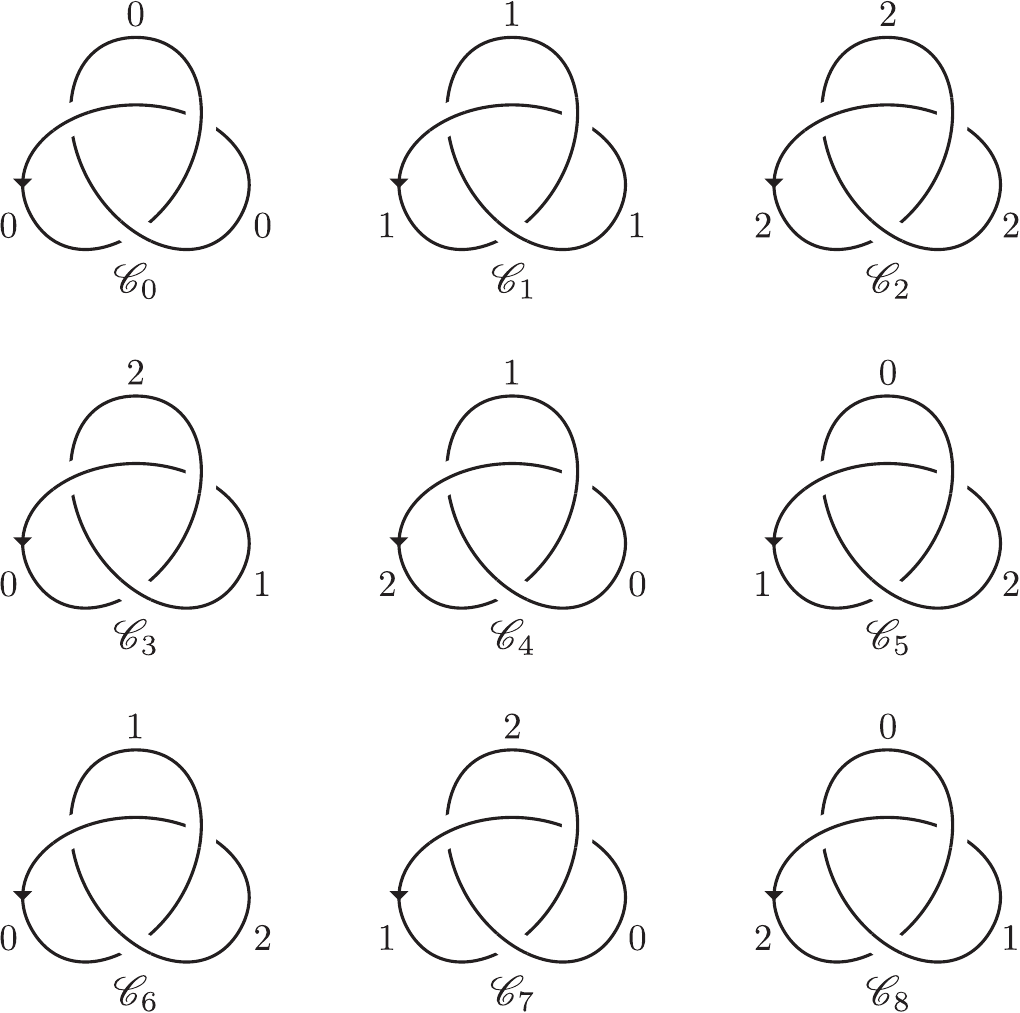}
 \caption{The colorings of a diagram of the trefoil knot by the dihedral quandle of order $3$.}
 \label{fox_coloring}
\end{figure}

It is well-known that, for each quandle $X$, a Reidemeister move yields a one-to-one correspondence between the set of $X$-colorings of a diagram $D$ and that of a diagram $D^{\prime}$ obtained from $D$ by the Reidemeister move.
Indeed, for each $X$-coloring $\mathscr{C}$ of $D$, we have a unique $X$-coloring $\mathscr{C}^{\prime}$ of $D^{\prime}$ which assigns the same colors with $\mathscr{C}$ for the arcs unrelated to the deformation and consistent colors for the others as depicted in Figure \ref{colored_Reidemeister_moves}.
We note that the axioms of a quandle guarantee the existence and uniqueness of $\mathscr{C}^{\prime}$.
A planar isotopy also yields a one-to-one correspondence between the set of $X$-colorings of a diagram $D$ and that of a diagram $D^{\prime}$ obtained from $D$ by the planar isotopy, assigning the same colors to the correspondent arcs.
In conclusion, if a diagram $D^{\prime}$ is obtained from a diagram $D$ by a finite sequence of Reidemeister moves and planar isotopies, we have a unique $X$-coloring $\mathscr{C}^{\prime}$ of $D^{\prime}$ corresponding to a $X$-coloring $\mathscr{C}$ of $D$.
We say in this situation that $\mathscr{C}^{\prime}$ is obtained from $\mathscr{C}$ by the sequence.
\begin{figure}[htbp]
 \centering
 \includegraphics[scale=0.5]{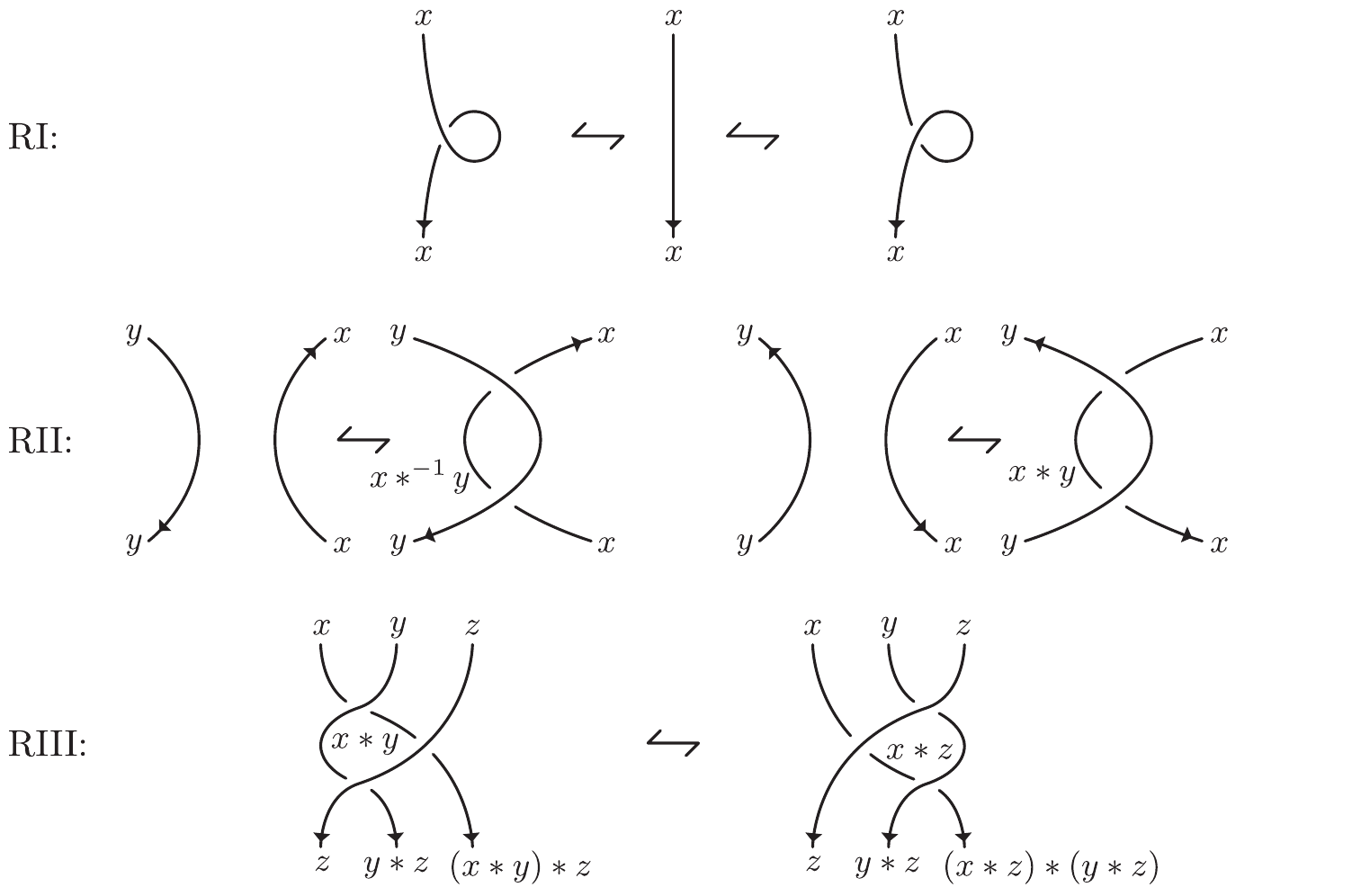}
 \caption{Reidemeister moves relate $X$-colorings of the diagrams uniquely.}
 \label{colored_Reidemeister_moves}
\end{figure}

Let $\mathscr{C}_{1}$ and $\mathscr{C}_{2}$ be $X$-colorings of a diagram.
We say that $\mathscr{C}_{2}$ is R-equivalent to $\mathscr{C}_{1}$, and write as $\mathscr{C}_{2} \sim \mathscr{C}_{1}$, if $\mathscr{C}_{2}$ is obtained from $\mathscr{C}_{1}$ by a finite sequence of Reidemeister moves and planar isotopies.
Obviously, R-equivalence yields an equivalence relation on the set of $X$-colorings of the diagram.
It is easy to see that, for each quandle $X$ and diagram $D$, a trivial $X$-coloring of $D$ is not R-equivalent to any other $X$-colorings of $D$.
Let us consider which colorings depicted in Figure \ref{fox_coloring} are R-equivalent to each other, for example.
As stated above, we know that $\mathscr{C}_{0}$, $\mathscr{C}_{1}$ and $\mathscr{C}_{2}$ are respectively not R-equivalent to any other colorings.
Since $\mathscr{C}_{4}$ and $\mathscr{C}_{5}$ are respectively obtained from $\mathscr{C}_{3}$ by the $\frac{2 \pi}{3}$- and $\frac{4 \pi}{3}$-rotations of the plane on which the diagram written, we have $\mathscr{C}_{3} \sim \mathscr{C}_{4} \sim \mathscr{C}_{5}$.
Similarly, we have $\mathscr{C}_{6} \sim \mathscr{C}_{7} \sim \mathscr{C}_{8}$.
Furthermore, since we obtain $\mathscr{C}_{7}$ from $\mathscr{C}_{3}$ by applying a switch, a shift and a switch in this order, in the sense of Section \ref{sec:main_theorem}, we have $\mathscr{C}_{3} \sim \mathscr{C}_{7}$.
Therefore, all non-trivial colorings depicted in Figure \ref{fox_coloring} are R-equivalent to each other.

\begin{remark}
 \label{remark2_for_equivalence_class}
 For some quandle $X$, let $\mathscr{C}_{1}$ and $\mathscr{C}_{2}$ be $X$-colorings of a diagram $D$.
 Furthermore, let $\mathscr{C}_{1}^{\prime}$ and $\mathscr{C}_{2}^{\prime}$ respectively be $X$-colorings of $D^{\prime}$ obtained from $\mathscr{C}_{1}$ and $\mathscr{C}_{2}$ by a finite sequence of Reidemeister moves and planar isotopies.
 Then $\mathscr{C}_{1}^{\prime}$ and $\mathscr{C}_{2}^{\prime}$ are R-equivalent to each other if and only if $\mathscr{C}_{1}$ and $\mathscr{C}_{2}$ are.
 Therefore, if we determine the R-equivalence class of $\mathscr{C}_{1}$, we also do the R-equivalence class of $\mathscr{C}_{1}^{\prime}$.
\end{remark}

\section{$\RotE$-colorings of a diagram of a torus knot}
\label{sec:RotE-coloring_of_torus_knots}

For given coprime integers $p$ and $q$ satisfying $|p|, |q| \geq 2$, let $D(p, q)$ be the diagram of the $(p, q)$-torus knot depicted in Figure \ref{a_diagram_of_(p,q)-torus_knot}.
In the remaining of this paper, we investigate R-equivalence classes of non-trivial $\RotE$-colorings of $D(p, q)$.
Since Inoue investigated the non-trivial $\RotE$-colorings of $D(p, q)$ in \cite{I2015}, we quickly review his work in this section.
\begin{figure}[htbp]
 \centering
 \includegraphics[scale=0.5]{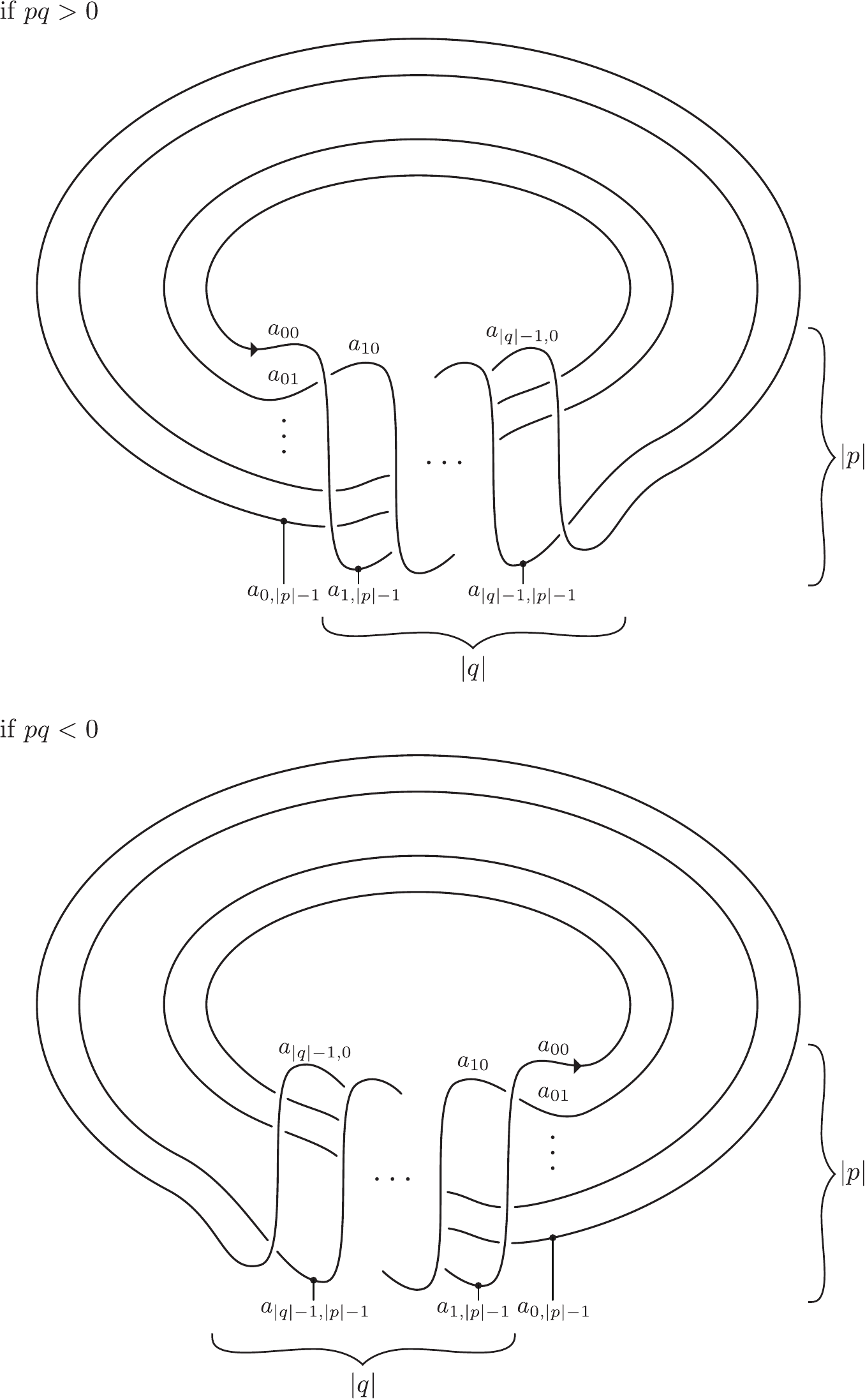}
 \caption{The diagram $D(p, q)$ of the $(p, q)$-torus knot.}
 \label{a_diagram_of_(p,q)-torus_knot}
\end{figure}

We first see that, for any $\RotE$-coloring of an oriented knot diagram $D$, the second components of the colors are the same (Lemma 3.2 of \cite{I2015}).
By the definition of $\ast$ of $\RotE$, for each crossing of $D$, the second components of the colors  of the under arcs are the same.
Since one can go around the arcs of $D$ passing under crossings, we have the claim.

We next see that we have a recipe to obtain a non-trivial $\RotE$-coloring of $D(p, q)$.
Consider a convex regular $m$-gon in $\mathbb{C}$ ($m \geq 2$) and let $v_{0}^{m}, v_{1}^{m}, \cdots , v_{m - 1}^{m}$ be the vertices of the $m$-gon in counterclockwise order.
For each integers $k$ and $i$ satisfying $1 \leq k \leq m - 1$ and $0 \leq i \leq m - 1$, we let $v_{i}^{m, k}$ denote the vertex $v_{[ik]_{m}}^{m}$.
Here, $[r]_{m}$ denotes the integer satisfying $0 \leq [r]_{m} \leq m - 1$ and $[r]_{m} \equiv r \bmod m$ for each $r \in \mathbb{Z}$.
Connect $v_{[i]_{m}}^{m, k}$ and $v_{[i + 1]_{m}}^{m, k}$ by a line segment for each $i$.
We then obtain a regular polygon.
We call it the \emph{regular polygon of type $(m, k)$} with vertices $(v_{0}^{m, k}, v_{1}^{m, k}, \cdots, v_{m-1}^{m, k})$ in this paper.
We remark that the regular polygons with vertices $(v_{0}^{m, k}, v_{1}^{m, k}, \cdots, v_{m - 1}^{m, k})$ and with vertices $(v_{[i]_{m}}^{m, k}, v_{[i + 1]_{m}}^{m, k}, \cdots, v_{[i + m - 1]_{m}}^{m, k})$ are the same for each $i \in \mathbb{Z}$.
We note that $v_{i}^{m, k} = v_{j}^{m, k}$ if $i \equiv j \bmod m^{\prime} = \frac{m}{\gcd (m, k)}$.
Thus, some vertices and edges of the polygon are overlapped if $\gcd (m, k) \neq 1$.
We furthermore note that a regular polygon of type $(m, m - k)$ is the mirror image of the regular polygon of type $(m, k)$.
We depict the cases of $m = 4, 5, 6$ in Figure \ref{regular_polygons}, for example.
It is easy to see that each side of a regular polygon of type $(m, k)$ has the same length and 
\[
 \angle v_{[i - 1]_{m}}^{m, k} v_{[i]_{m}}^{m, k} v_{[i + 1]_{m}}^{m, k} = \left| \frac{(m - 2k) \pi}{m} \right|
\]
for each $i$.
\begin{figure}[htbp]
 \centering
 \includegraphics[scale=0.5]{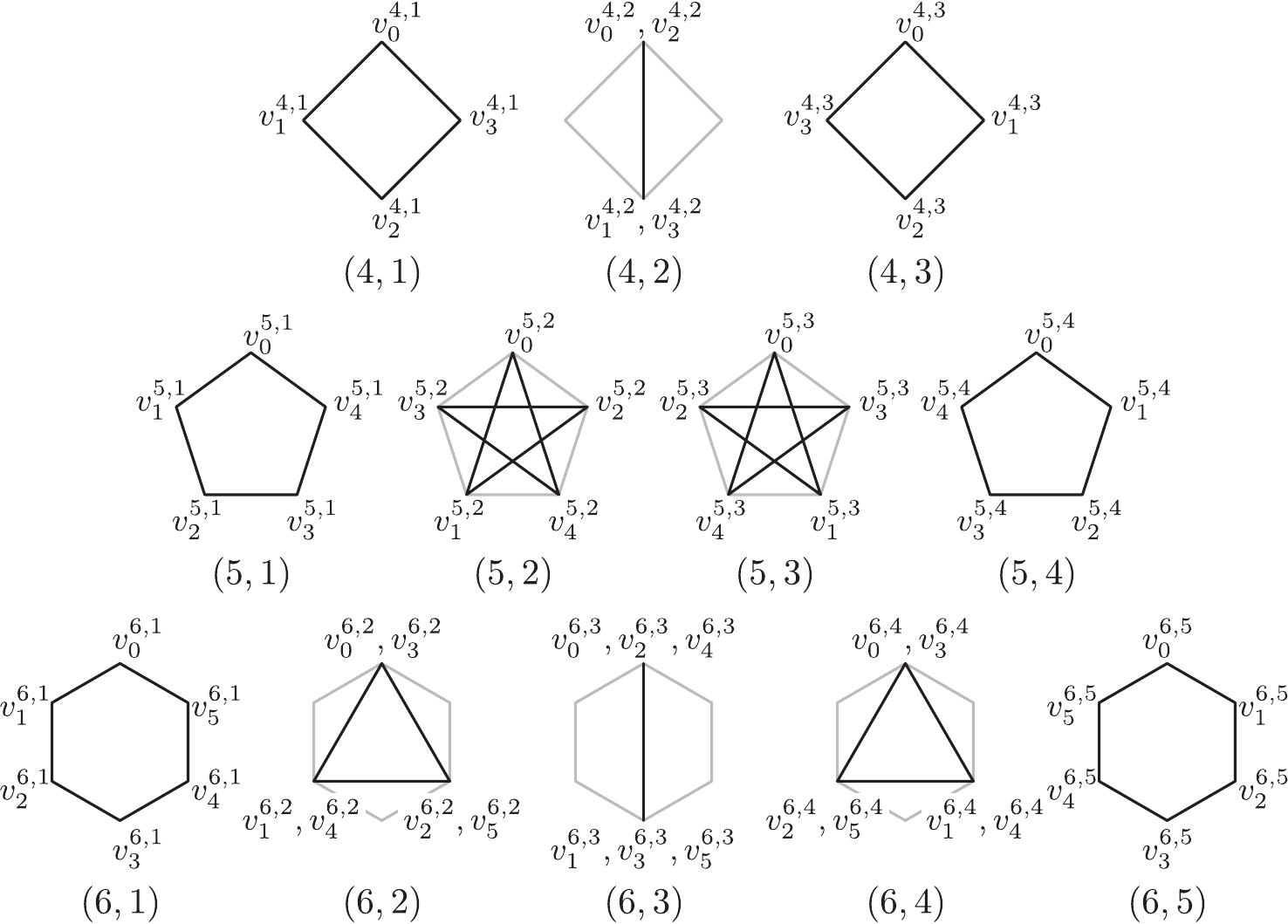}
 \caption{Regular polygons of type $(4, k)$, $(5, k)$ and $(6, k)$.}
 \label{regular_polygons}
\end{figure}

Let $Q$ be a regular polygon of type $(n, l)$ with vertices $(v_{0}, v_{1}, \cdots, v_{n - 1})$ and $P_{0}$ of type $(m, k)$ with vertices $(w_{00}, w_{01}, \cdots, w_{0, m - 1})$ ($m, n \geq 2, \ 1 \leq k \leq m - 1, \ 1 \leq l \leq n - 1$).
We assume that $w_{00} = v_{0}$ and $w_{01} = v_{1}$.
We first rotate $P_{0}$ about $w_{01} = v_{1}$ by angle 
\[
 \theta=\theta (m, k; n, l) = \left( \frac{m - 2k}{m}-\frac{n - 2l}{n} \right) \pi.
\]
Then we obtain the regular polygon $P_{1}$ of type $(m, k)$.
Let $(w_{10}, w_{11}, \cdots, w_{1, m - 1})$ be the vertices of $P_{1}$.
We may assume that $w_{11} = v_{1}$ and $w_{1, [2]_{m}} = v_{[2]_{n}}$.
We next rotate $P_{1}$ about $w_{1, [2]_{m}} = v_{[2]_{n}}$ by angle $\theta$.
Then we obtain the regular polygon $P_{2}$ of type $(m, k)$ with vertices $(w_{20}, w_{21}, \cdots, w_{2, m - 1})$. 
We may assume that $w_{2, [2]_{m}} = v_{[2]_{n}}$ and $w_{2, [3]_{m}} = v_{[3]_{n}}$.
We continue this process in the same way: the $i$-th step is the rotation of $P_{i - 1}$ about $w_{i - 1, [i]_{m}} = v_{[i]_{n}}$ by angle $\theta$ to obtain the regular polygon $P_{i}$ of type $(m, k)$ with vertices $(w_{i0}, w_{i1}, \cdots, w_{i, m - 1})$ satisfying $w_{i, [i]_{m}} = v_{[i]_{n}}$ and $w_{i, [i + 1]_{m}} = v_{[i + 1]_{n}}$. 
We call this procedure the \emph{$(m, k; n, l)$-trochoid} around $Q$ starting at $P_{0}$.
Figure \ref{4132_trochoid} illustrates a $(4, 1; 3, 2)$-trochoid, for example.
We often draw it all together as depicted in Figure \ref{diagram_of_4132_trochoid} and call it the \emph{diagram} of the trochoid.
\begin{figure}[htbp]
 \centering
 \includegraphics[scale=0.5]{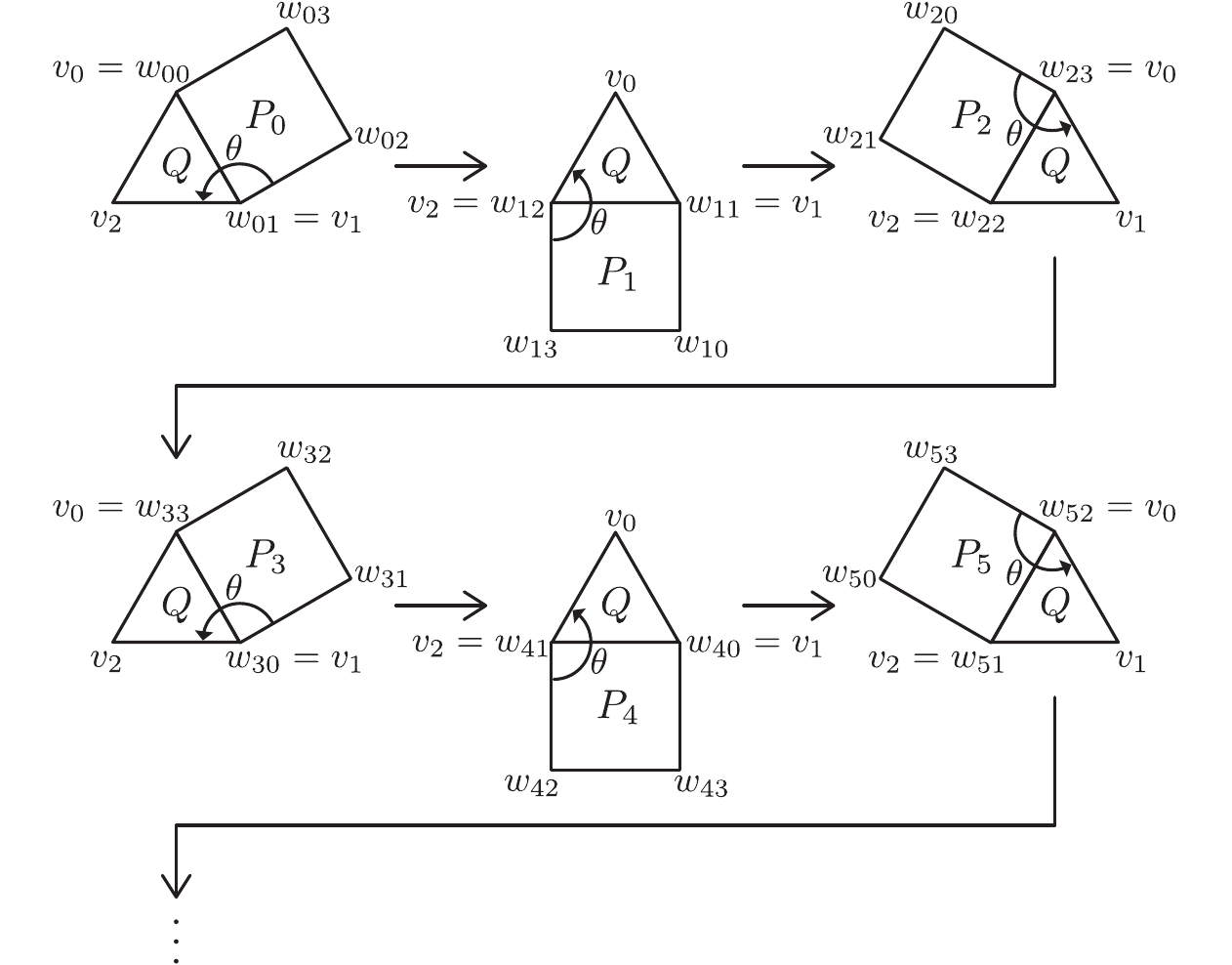}
 \caption{A $(4, 1; 3, 2)$-trochoid around $Q$ starting at $P_{0}$.}
 \label{4132_trochoid}
\end{figure}
\begin{figure}[htbp]
 \centering
 \includegraphics[scale=0.5]{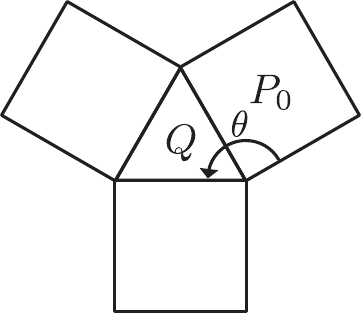}
 \caption{The diagram of the $(4, 1; 3, 2)$-trochoid depicted in Figure \ref{4132_trochoid}.}
 \label{diagram_of_4132_trochoid}
\end{figure}

\begin{theorem}[Theorem 4.1 of \cite{I2015}]
 \label{coloring_and_trochoid}
 Set $m = |p|$ and $n = |q|$ in the above.
 Let $a_{ij}$ denote the arcs of $D(p, q)$ as depicted in Figure \ref{a_diagram_of_(p,q)-torus_knot}, although $a_{i0}$ and $a_{[i + 1]_{|q|}, |p| - 1}$ mark the same arc for each $i$ \textup{(}$0 \leq i \leq |q| - 1$\textup{)}.
 Then the map $\mathscr{C}(Q, P_{0})$ from the set of all arcs of $D(p, q)$ to $\RotE$ given by 
 \[
  \mathscr{C}(Q, P_{0})(a_{ij})=\left( w_{i, [i + j + 1]_{|p|}}, e^{\theta (|p|, k; |q|, l) \sqrt{-1}} \right)
 \] 
 is well-defined and a non-trivial $\RotE$-coloring of $D(p,q)$.
 Here, we assume that names of the vertices of the polygons are given by their coordinates.
\end{theorem}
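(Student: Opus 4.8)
\emph{The plan is} to reduce everything to the first components of the colors. By the fact recalled at the beginning of this section (Lemma 3.2 of \cite{I2015}), all colors occurring in an $\RotE$-coloring share a common second component, which for $\mathscr{C}(Q,P_{0})$ is $e^{\theta\sqrt{-1}}$ with $\theta=\theta(m,k;n,l)$. Since $\left(z,e^{\theta\sqrt{-1}}\right)\ast\left(w,e^{\theta\sqrt{-1}}\right)=\left(\left(z-w\right)e^{\theta\sqrt{-1}}+w,\,e^{\theta\sqrt{-1}}\right)$, whose first entry is the image of $z$ under the rotation about $w$ through $\theta$, the coloring condition at each crossing collapses to a single geometric statement once the second components are known to agree: the center of the outgoing under-arc is the rotation of the center of the incoming under-arc about the center of the over-arc through angle $\theta$. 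The one fact I would record for repeated use is the defining property of the trochoid, namely that $P_{i}$ is the image of $P_{i-1}$ under the rotation about $v_{[i]_{n}}$ through $\theta$; equivalently $\left(w_{i-1,a},e^{\theta\sqrt{-1}}\right)\ast\left(v_{[i]_{n}},e^{\theta\sqrt{-1}}\right)=\left(w_{i,a},e^{\theta\sqrt{-1}}\right)$ for every vertex label $a$.

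First I would check well-definedness, the only issue being that $a_{i0}$ and $a_{[i+1]_{n},\,m-1}$ denote the same arc. The second components agree automatically, and the prescribed first components are $w_{i,[i+1]_{m}}$ and $w_{[i+1]_{n},\,[[i+1]_{n}]_{m}}$; both equal $v_{[i+1]_{n}}$ by the construction relations $w_{j,[j]_{m}}=v_{[j]_{n}}$ and $w_{j,[j+1]_{m}}=v_{[j+1]_{n}}$ (taking $j=i$ and $j=[i+1]_{n}$ respectively), so $\mathscr{C}(Q,P_{0})$ is well-defined.

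Next I would run through the crossings using the incidences read off from Figure \ref{a_diagram_of_(p,q)-torus_knot}. In the block of crossings taking row $i$ to row $i+1$, the arc $a_{i0}$, whose center $w_{i,[i+1]_{m}}=v_{[i+1]_{n}}$ is exactly the pivot of the $(i+1)$-st trochoid step, is the over-arc, and $a_{ij}$ passes under it to become $a_{i+1,\,j-1}$. The centers of $a_{ij}$ and $a_{i+1,\,j-1}$ carry the common label $[i+j+1]_{m}$, so the recorded trochoid relation at the pivot $v_{[i+1]_{n}}$ yields precisely $\text{center}(a_{ij})\ast\text{center}(a_{i0})=\text{center}(a_{i+1,\,j-1})$, settling every crossing whose outgoing arc lies in rows $1,\dots,n-1$. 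The hard part is the last block, which sends row $n-1$ back to row $0$: here the outgoing arcs $a_{0,\,j-1}$ force me to compare $P_{n-1}$ with $P_{0}$, i.e.\ to prove the trochoid closes up. I would do this by taking one more step to form $P_{n}$, the rotation of $P_{n-1}$ about $v_{0}=v_{[n]_{n}}$ through $\theta$; the step relations give $w_{n,[n]_{m}}=v_{0}$ and $w_{n,[n+1]_{m}}=v_{1}$, so $P_{n}$ (being a rotation of $P_{n-1}$) and $P_{0}$ are regular polygons of type $(m,k)$ that both realize the directed edge $v_{0}\to v_{1}$ as a label-increasing edge. As such a polygon is determined by one directed edge, its type, and its orientation, I get $P_{n}=P_{0}$ and, reading labels, the closure identity $w_{n,a}=w_{0,[a-n]_{m}}$. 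This turns the last block's condition into $\text{center}(a_{n-1,j})\ast\text{center}(a_{n-1,0})=w_{n,[n+j]_{m}}=w_{0,[j]_{m}}=\text{center}(a_{0,\,j-1})$.

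Non-triviality would then be immediate, since $a_{00}$ and $a_{0,\,m-1}$ receive the distinct centers $w_{0,1}=v_{1}$ and $w_{0,0}=v_{0}$ (adjacent vertices of $P_{0}$), so $\mathscr{C}(Q,P_{0})$ is not constant. I expect the closure of the trochoid to be the only step that is not pure bookkeeping: it is where one leaves the regime of a single rotation and must invoke the rigidity fact that a regular polygon of a given type and orientation is pinned down by one of its directed edges, so that $P_{n}$, sharing the edge $v_{0}\to v_{1}$ with $P_{0}$, must equal it. The remaining work is careful tracking of the residues $[\,\cdot\,]_{m}$, $[\,\cdot\,]_{n}$ and of the identification $a_{i0}=a_{[i+1]_{n},\,m-1}$.
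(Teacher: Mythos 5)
Your proposal is correct and takes essentially the same approach as the paper: the paper's proof (restating Inoue's Theorem 4.1) simply declares the verification ``routine to check,'' and what you have written is precisely that routine check carried out in detail --- well-definedness via the relations $w_{i,[i+1]_{m}} = v_{[i+1]_{n}}$ and $w_{j,[j]_{m}} = v_{[j]_{n}}$, the crossing condition via the trochoid step relation, and non-triviality from $v_{0} \neq v_{1}$. Your observation that the only substantive point is the closure of the trochoid, settled by the rigidity fact that a regular polygon of given type and orientation is determined by a single directed labeled edge (so $P_{n}$ and $P_{0}$ coincide up to the label shift $w_{n,a} = w_{0,[a-n]_{m}}$), is sound and fills in exactly what the paper leaves implicit.
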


\begin{proof}
 It is routine to check that $\mathscr{C}(Q, P_{0})$ is well-defined and satisfies the condition for a $\RotE$-coloring at each crossing.
\end{proof}

We call $\mathscr{C}(Q, P_{0})$ the non-trivial $\RotE$-coloring derived from the $(|p|, k; |q|, l)$-trochoid around $Q$ starting at $P_{0}$.
Inoue also showed that any non-trivial $\RotE$-coloring of $D(p, q)$ is derived from some $(|p|, k; |q|, l)$-trochoid (see the last paragraph of p.\ 42 of \cite{I2015}).
Therefore, we have a one-to-one correspondence between the set of non-trivial $\RotE$-colorings of $D(p, q)$ and the set of $(|p|, k; |q|, l)$-trochoids ($1 \leq k \leq |p| - 1, \ 1 \leq l \leq |q| - 1$).

\begin{remark}
 \label{how_to_know_all_colors}
 For a non-trivial $\RotE$-coloring $\mathscr{C}(Q, P_{0})$ of $D(p, q)$, the vertex $v_{i} = w_{i, [i]_{|p|}}$ of $Q$ is appeared as the first component of the color of the arc $a_{i, |p| - 1}$ ($0 \leq i \leq |q| - 1$).
 Similarly, the vertex $w_{0, [j + 1]_{|p|}}$ of $P_{0}$ is appeared as the first component of the color of the arc $a_{0j}$ ($0 \leq j \leq |p| - 1$).
 Therefore, if we know the first components of the colors of the arcs $a_{i, |p| - 1}$ or $a_{0j}$ for a non-trivial $\RotE$-coloring, we can determine the trochoid from which the coloring derived.
\end{remark}

\section{Main theorem}
\label{sec:main_theorem}

The aim of this section is to claim that we can determine the R-equivalence classes of non-trivial $\RotE$-colorings of $D(p, q)$ under a certain condition.
To do it, we first introduce the following two deformations of $D(p, q)$.

The first one is a \emph{shift} which deforms $D(p, q)$ to $D(p, q)$ by the planar isotopy sending the part surrounded by the rectangle in the left-hand side of Figure \ref{pic_shift} to the one in the right-hand side.
The second one is a \emph{switch} which deforms $D(p, q)$ to $D(q, p)$ as explained in Figures \ref{dual_1}--\ref{dual_3} considering diagrams as ``pictures'' of the torus knot which lies on a once punctured torus.
Although the figures illustrate the case $pq > 0$, we have a similar deformation even if $pq < 0$.
We note that a sequence of shifts and even times of switches deforms $D(p,q)$ to $D(p,q)$.
\begin{figure}[htbp]
 \centering
 \includegraphics[scale=0.5]{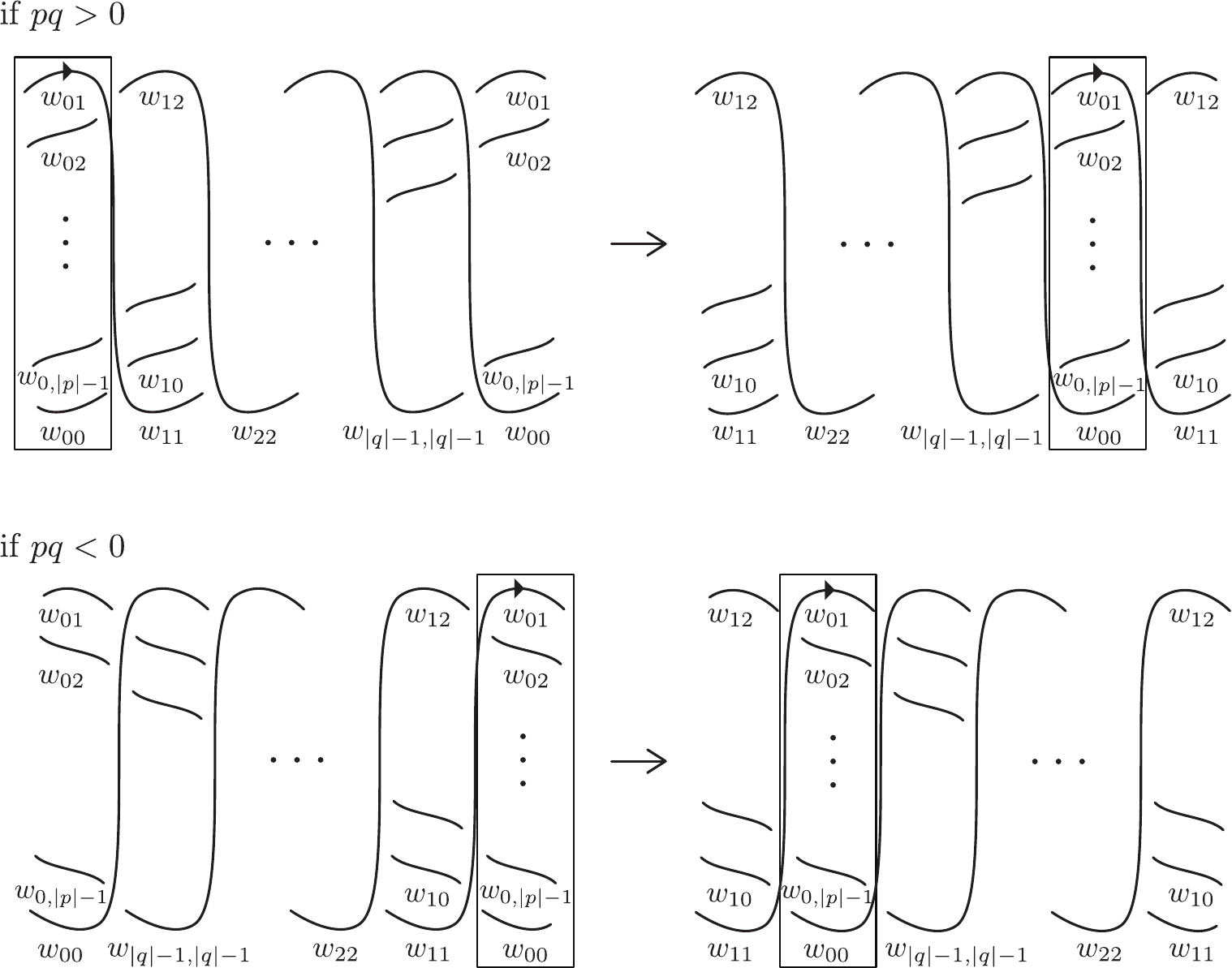}
 \caption{We obtain $\mathscr{C}(Q, P_{1})$ from $\mathscr{C}(Q, P_{0})$ by a planar isotopy.}
 \label{pic_shift}
\end{figure}
\begin{figure}[htbp]
 \centering
 \includegraphics[scale=0.5]{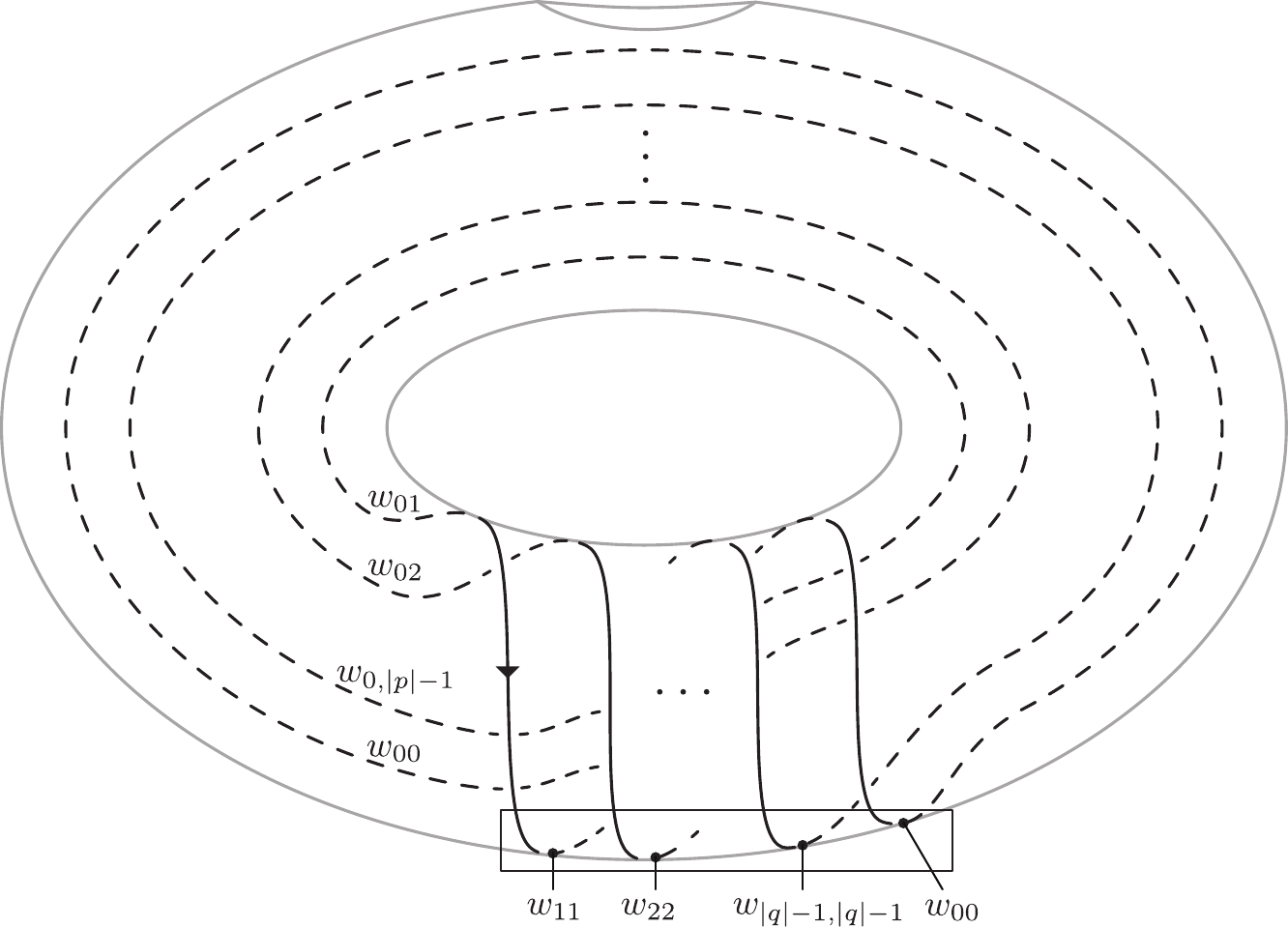}
 \caption{We lay the $(p, q)$-torus knot on a once punctured torus.}
 \label{dual_1}
\end{figure}
\begin{figure}[htbp]
 \centering
 \includegraphics[scale=0.5]{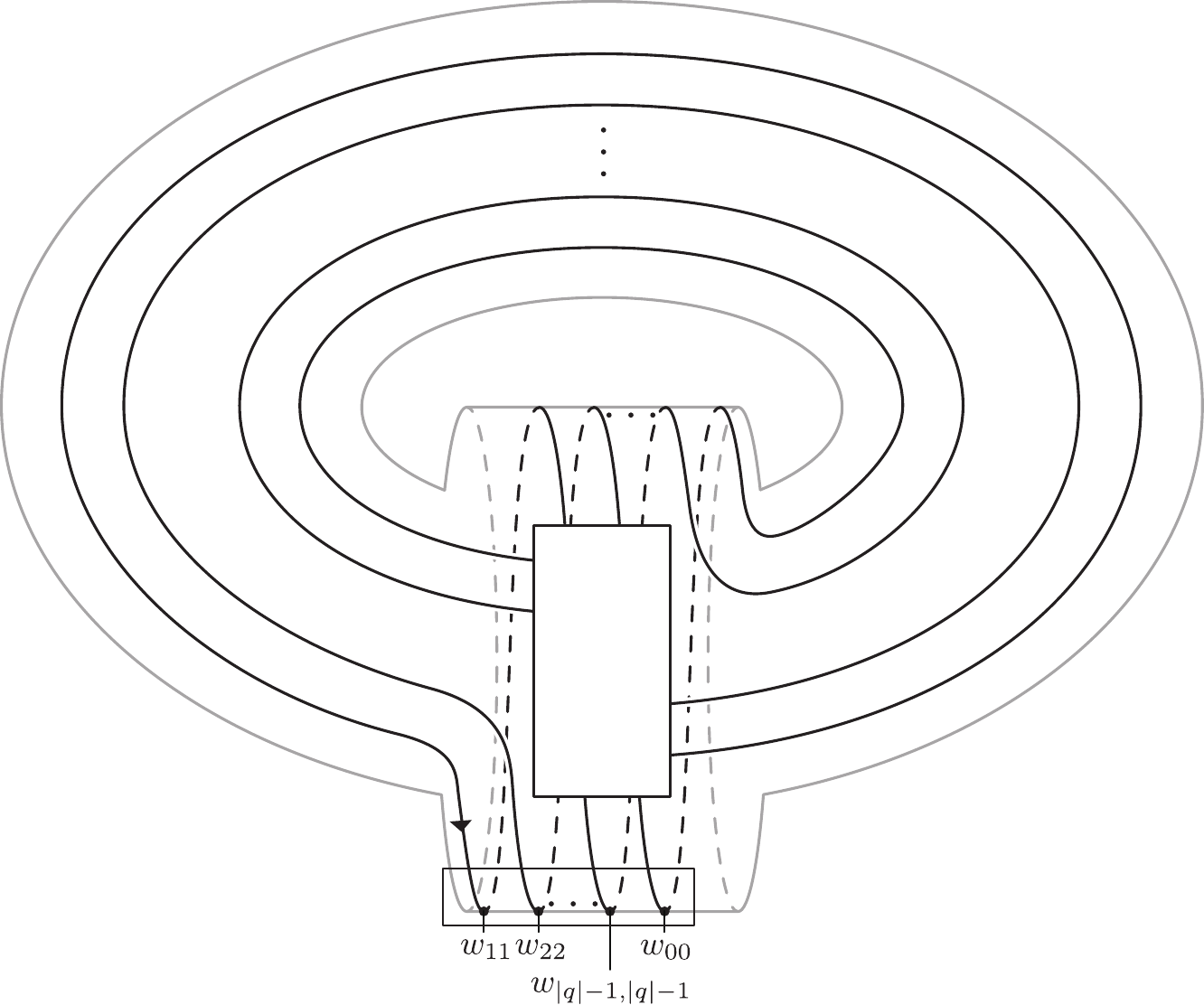}
 \caption{We widen the puncture of the torus depicted in Figure \ref{dual_1}.}
 \label{dual_2}
\end{figure}
\begin{figure}[htbp]
 \centering
 \includegraphics[scale=0.5]{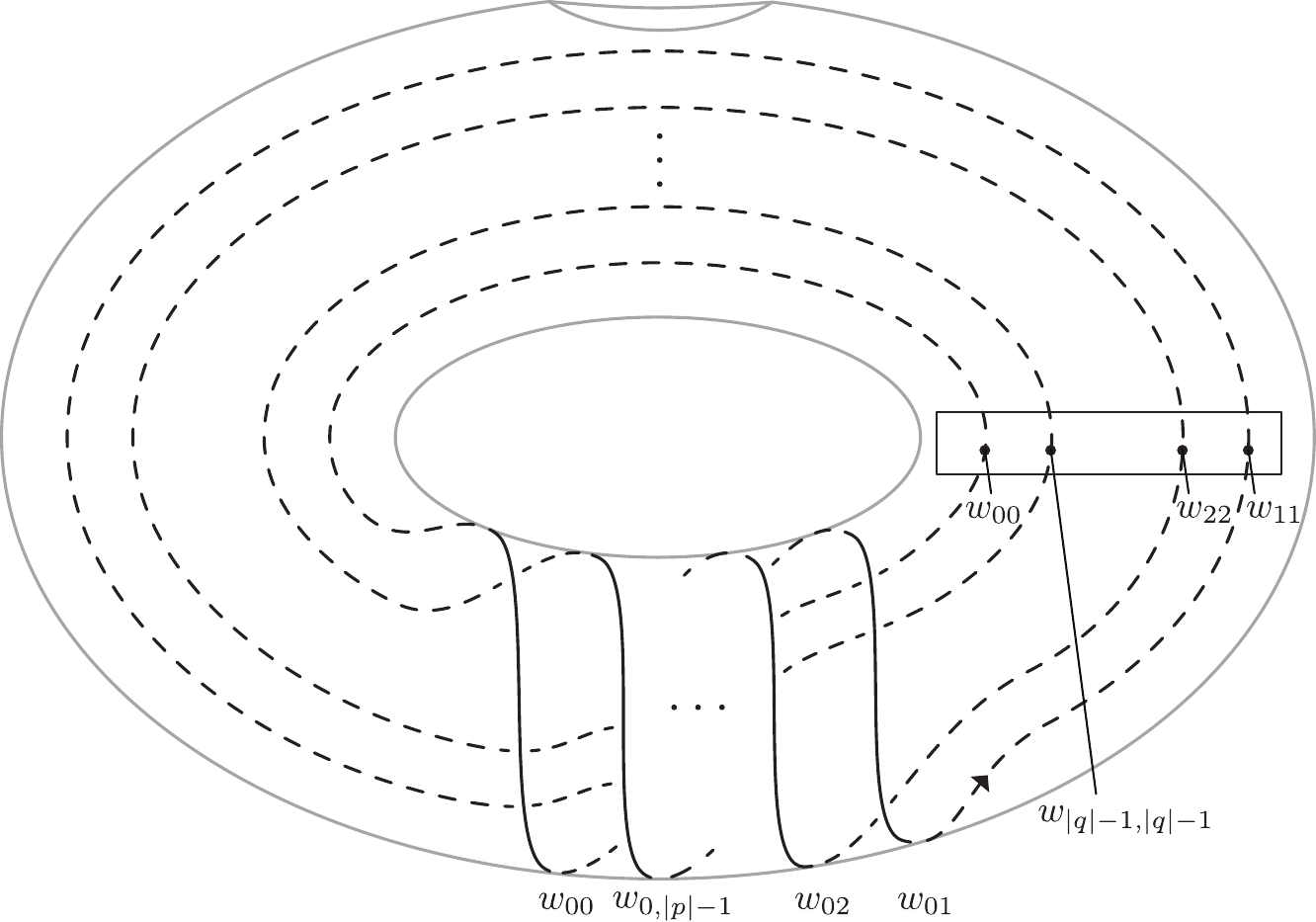}
 \caption{We narrow the puncture of the torus depicted in Figure \ref{dual_2} turning the torus inside out.}
 \label{dual_3}
\end{figure}

We next see the effect of a shift or a switch on a non-trivial $\RotE$-coloring of $D(p, q)$ and the trochoid from which the coloring is derived.
Suppose that $D(p, q)$ is equipped with a non-trivial $\RotE$-coloring $\mathscr{C}(Q, P_{0})$ derived from a $(|p|,k;|q|,l)$-trochoid as depicted in the left-hand side of Figure \ref{pic_shift} or Figure \ref{dual_1}.
Here, the second components of the colors are omitted in the figures for viewability.
Furthermore, although we should estimate the second index $j$ of each $w_{ij}$ by the function $[\, \cdot \,]_{|p|}$ strictly speaking, we omit it in the figures for simplicity.
We first consider the coloring of $D(p, q)$ obtained from $\mathscr{C}(Q, P_{0})$ by a shift, that is depicted in the right-hand side of Figure \ref{pic_shift}.
It is easy to see that this coloring is $\mathscr{C}(Q, P_{1})$, derived from the $(|p|, k; |q|, l)$-trochoid around $Q$ starting at $P_{1}$ instead of $P_{0}$.
We also call the deformation of the diagram of the trochoid from which $\mathscr{C}(Q, P_{0})$ is derived to the one from which $\mathscr{C}(Q, P_{1})$ is derived a \emph{shift}.
We next consider the coloring of $D(q, p)$ obtained from $\mathscr{C}(Q, P_{0})$ by a switch.
Since no arcs fly over the parts surrounded by rectangles in Figures \ref{dual_1}--\ref{dual_3} during a switch, the colors of the arcs in the interior of the rectangles do not change.
Thus, in light of Remark \ref{how_to_know_all_colors}, we know that the coloring of $D(q, p)$ obtained from $\mathscr{C}(Q, P_{0})$ by a switch is the one depicted in Figure \ref{dual_3}.
To describe the coloring, we let $\langle P_{0} \rangle$ denote the regular polygon of type $(|p|, |p| - k)$ with vertices $(w_{01}, w_{00}, w_{0, |p| - 1}, \cdots, w_{02})$ and $\langle Q \rangle _{0}$ of type $(|q|, |q| - l)$ with vertices $(v_{1}, v_{0}, v_{|q| - 1}, \cdots, v_{2}) = (w_{11}, w_{00}, w_{|q| - 1, [|q| - 1]_{|p|}}, \cdots, w_{2, [2]_{|p|}})$.
We note that $P_{0}$ and $\langle P_{0} \rangle$ (respectively $Q$ and $\langle Q \rangle _{0}$) have the same shape in $\mathbb{C}$.
Then it is routine to see that the coloring of $D(q, p)$ obtained from $\mathscr{C}(Q, P_{0})$ by a switch is $\mathscr{C}(\langle P_{0} \rangle, \langle Q \rangle _{0})$ derived from the $(|q|, |q| - l; |p|, |p| - k)$-trochoid around $\langle P_{0} \rangle$ starting at $\langle Q \rangle _{0}$.
We also call the deformation of the diagram of the trochoid from which $\mathscr{C}(Q, P_{0})$ is derived to the one from which $\mathscr{C}(\langle P_{0} \rangle, \langle Q \rangle _{0})$ is derived a \emph{switch}.

For non-trivial $\RotE$-colorings of $D(p, q)$, we have the following theorem.

\begin{theorem}
 \label{main_thm}
 Let $\mathscr{C}_{0}$ be a non-trivial $\RotE$-coloring of $D(p, q)$ derived from a $(|p|, k; |q|, l)$-trochoid.
 Furthermore, let $p^{\prime} = \frac{|p|}{\gcd(|p|, k)}$ and $q^{\prime} = \frac{|q|}{\gcd(|q|, l)}$.
 If $p^{\prime} q^{\prime}$ is even, then a non-trivial $\RotE$-coloring $\mathscr{C}$ of $D(p, q)$ is R-equivalent to $\mathscr{C}_{0}$ if and only if $\mathscr{C}$ is obtained from $\mathscr{C}_{0}$ by shifts and even times of switches.
\end{theorem}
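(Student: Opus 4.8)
The plan is to prove the two implications separately, since the statement is an equivalence. The \emph{if} direction is essentially formal, while the \emph{only if} direction carries all the content and rests on two invariants of R-equivalence.

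For the \emph{if} direction I would argue directly from the constructions of this section. A shift is realized by a planar isotopy of $D(p, q)$, and a switch by a finite sequence of Reidemeister moves deforming $D(p, q)$ to $D(q, p)$, as explained in Figures \ref{pic_shift}--\ref{dual_3}; moreover, under these deformations the coloring $\mathscr{C}(Q, P_{0})$ is carried to the colorings described above. An even number of switches returns the diagram from $D(p, q)$ through $D(q, p)$ back to $D(p, q)$. Composing, any coloring obtained from $\mathscr{C}_{0}$ by shifts and an even number of switches is obtained from $\mathscr{C}_{0}$ by a finite sequence of Reidemeister moves and planar isotopies, hence is R-equivalent to $\mathscr{C}_{0}$. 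This uses only the definitions and requires no parity hypothesis.

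For the \emph{only if} direction I would extract two necessary conditions that any $\RotE$-coloring R-equivalent to $\mathscr{C}_{0}$ must satisfy, to be established beforehand as Theorems \ref{necessary_condition_1} and \ref{necessary_condition_2}. The first condition should record the geometric data preserved by R-equivalence: since the colors are genuine rotations of $\mathbb{E}^{2}$ and the diagram is deformed only back to $D(p, q)$ (possibly through $D(q, p)$), the common second component $e^{\theta \sqrt{-1}}$ and the congruence type of the defining polygons are preserved. As $\theta(|p|, k; |q|, l)$ is invariant under a switch (a direct computation shows $\theta(|q|, |q| - l; |p|, |p| - k) = \theta(|p|, k; |q|, l)$), this confines $\mathscr{C}$ to the orbit of $\mathscr{C}_{0}$ under shifts and switches, that is, to trochoids of type $(|p|, k; |q|, l)$ up to the switch-duality $(k, l) \mapsto (|q| - l, |p| - k)$. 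The second condition should be a $\mathbb{Z} / 2$-valued invariant of R-equivalence that changes under a single switch; this is what forbids an odd number of switches and pins the class down to shifts and an even number of switches.

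The crux, and the step I expect to be the main obstacle, is the combinatorial matching that closes the loop. After both conditions are in hand, I would parametrize the set $S$ of colorings they cut out and compare it with the orbit $O$ of $\mathscr{C}_{0}$ under shifts and even switches. The \emph{if} direction gives that $O$ is contained in the R-equivalence class, the two conditions give that the R-equivalence class is contained in $S$, so it remains to prove $O = S$. This is exactly where the hypothesis that $p^{\prime} q^{\prime}$ be even enters: a double switch acts on type-$(|p|, k; |q|, l)$ trochoids as a concrete permutation, and one must show that, together with the shifts (a cyclic action whose period is governed by $p^{\prime} = \frac{|p|}{\gcd(|p|, k)}$ and $q^{\prime} = \frac{|q|}{\gcd(|q|, l)}$), it sweeps out precisely $S$. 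When $p^{\prime} q^{\prime}$ is even the parity invariant of the second condition is non-degenerate and separates the even-switch colorings from the odd-switch ones, so $O$ fills $S$; when $p^{\prime} q^{\prime}$ is odd a double switch collapses onto a shift and the separation fails, which is why that case is excluded. Verifying the non-degeneracy of the parity invariant in the even case, and tracking the double-switch permutation against the shift action explicitly, is the technical heart of the argument.
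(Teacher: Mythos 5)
Your skeleton for the ``only if'' direction --- $O \subseteq [\mathscr{C}_{0}] \subseteq S$ and then prove $O = S$ --- fails at exactly the step you identify as the crux, because the set $S$ cut out by your two necessary conditions is strictly larger than the orbit $O$. The conditions you propose (preservation of the common second component, which is the paper's Theorem \ref{necessary_condition_1}, and preservation of the congruence type of the polygons, which is Theorem \ref{necessary_condition_2}) say nothing about \emph{where} the trochoid sits in the plane: translating the entire trochoid of $\mathscr{C}_{0}$ by a generic vector produces a coloring in $S$ that is not in $O$ and, by the theorem itself, is not R-equivalent to $\mathscr{C}_{0}$. So $O = S$ is false and cannot be proved. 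The missing idea is a third, much finer necessary condition, which is the technical heart of the paper: any coloring R-equivalent to $\mathscr{C}_{0}$ can only use, as first components of colors, points of the lattice $\mathscr{W}_{1}$ generated over $\mathbb{Z}$ by the unit vectors $v_{\sigma}$ (Propositions \ref{W_1} and \ref{W_2}), and --- via the number-theoretic fact that a cyclotomic integer of absolute value $1$ is a root of unity (Proposition \ref{prop:main} in the appendix) --- the only points of this lattice at unit distance from a lattice point $w$ are the $w + v_{\sigma}$ (Corollary \ref{point_number}). This is what forces the polygon $Q$ of any R-equivalent coloring to have edges of the form $w,\, w + v_{\sigma}$ with $w$ in the lattice, after which the matching with the orbit becomes possible. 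Also, Theorem \ref{necessary_condition_2} is not ``direct'' as you suggest: the paper proves it with the signed-area quandle $2$-cocycle $\Phi_{o}$ and the weight computation of Proposition \ref{weight_of_Dpq}.

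Your proposed second condition --- a $\mathbb{Z}/2$-valued invariant that changes under a single switch and ``forbids an odd number of switches'' --- is a red herring, and it points the parity hypothesis at the wrong place. Since a switch turns $D(p, q)$ into $D(q, p)$ and $|p| \neq |q|$ (as $p$ and $q$ are coprime with $|p|, |q| \geq 2$), any sequence of shifts and switches that returns to $D(p, q)$ automatically contains an even number of switches; no invariant is needed to enforce this, and none appears in the paper. The hypothesis that $p^{\prime} q^{\prime}$ is even is used elsewhere: the fundamental deformation (switch, shift, switch, shift) acts on a trochoid as a $\theta$-rotation about a vertex, and evenness of $p^{\prime} q^{\prime}$ guarantees $\mathscr{V}_{\sigma} = \mathscr{V}_{\tau} = \mathscr{V}$ in Lemma \ref{fundamental_deformation_3}, i.e.\ that the two mirror-image polygons $Q^{+}$ and $Q^{-}$ sharing a given edge can both be reached inside the orbit. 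When $p^{\prime} q^{\prime}$ is odd one instead has $\mathscr{V} = \mathscr{V}_{\sigma} \sqcup \mathscr{V}_{\tau}$, the two chiralities cannot be joined by these moves, and the paper explicitly leaves that case open (Remark \ref{odd_case}): the failure there is that the orbit may be too small relative to the R-equivalence class, not that ``a double switch collapses onto a shift'' or that some separation property degenerates.
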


It is clear that $\mathscr{C}$ is R-equivalent to $\mathscr{C}_{0}$ if $\mathscr{C}$ is obtained from $\mathscr{C}_{0}$ by shifts and even times of switches.
Therefore, we devote the remaining of this paper to prove the ``only if'' part.

\section{Two necessary conditions for R-equivalence}
\label{sec:necessary_conditions_for_R-equivalence}

In this section, we introduce two necessary conditions for non-trivial $\RotE$-colorings of $D(p, q)$ to be R-equivalent, which are stated in the following theorems.

\begin{theorem}
\label{necessary_condition_1}
 Let $\mathscr{C}$ and $\mathscr{C}^{\prime}$ respectively be non-trivial $\RotE$-colorings of $D(p, q)$ derived from a $(|p|, k; |q|, l)$-trochoid and a $(|p|, k^{\prime}; |q|, l^{\prime})$-trochoid.
 If $\mathscr{C}^{\prime}$ is R-equivalent to $\mathscr{C}$, then $k^{\prime} = k$ and $l^{\prime} = l$.
\end{theorem}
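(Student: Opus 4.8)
The plan is to find a quantity preserved under Reidemeister moves that detects the parameters $k$ and $l$. Since R-equivalence is generated by Reidemeister moves and planar isotopies, any invariant of an $X$-coloring that is preserved under these moves must agree on R-equivalent colorings. The natural candidate here is a geometric invariant read off from the polygons: by Theorem \ref{coloring_and_trochoid} together with Remark \ref{how_to_know_all_colors}, the coloring $\mathscr{C}$ records, as first components of the colors, the vertices of the polygons $Q$ (of type $(|q|,l)$) and $P_0$ (of type $(|p|,k)$). The interior angles of these polygons are $\left|\frac{(|q|-2l)\pi}{|q|}\right|$ and $\left|\frac{(|p|-2k)\pi}{|p|}\right|$, and the rotation angle $\theta(|p|,k;|q|,l)$ is likewise determined by $k$ and $l$. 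So the first task is to isolate which of these angular data are genuine invariants of the R-equivalence class.

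First I would observe that the common second component $e^{\theta\sqrt{-1}}$ of all colors (Lemma 3.2 of \cite{I2015}, reviewed in Section \ref{sec:RotE-coloring_of_torus_knots}) is itself an invariant: under any colored Reidemeister move the set of colors, hence the set of their second components, is determined by the original coloring, and since all second components coincide and equal $e^{\theta\sqrt{-1}}$, this single unit complex number is preserved. This already pins down $\theta(|p|,k;|q|,l) = \left(\frac{|p|-2k}{|p|}-\frac{|q|-2l}{|q|}\right)\pi \bmod 2\pi$. Next I would seek a second invariant to separate $k$ from $l$. The natural move is to extract a rotation angle intrinsic to one of the two polygons — for instance the turning angle along the trochoid, which is governed by the interior angle of $Q$ (equivalently $P_0$) and is visible as the angle of the local rotation at a fixed type of crossing. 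The key point is that a Reidemeister move changes the diagram and the coloring in a controlled way, but the \emph{angles} realized by the rotations assigned to arcs at the surviving crossings are carried along; so I would argue that the multiset of rotation angles appearing at the crossings, or more precisely the pair of polygon interior angles, is invariant.

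The cleanest route is probably to work directly with the developing/holonomy data. For each $\RotE$-coloring one can associate to the knot group, or to loops in the diagram, an element recording a net rotation, and this is a conjugacy-type invariant unchanged by Reidemeister moves. Concretely, I would compute the holonomy around the meridian and around a longitudinal loop of the torus knot: one of these detects the interior angle of $P_0$ (hence $k$) and the other detects that of $Q$ (hence $l$). Since holonomy along a fixed homotopy class of loops in the complement is manifestly preserved under Reidemeister moves and planar isotopy, extracting these two angles gives two independent invariants. Combining the two angular invariants — the common second component $\theta$ and one further polygon angle — yields a system of two equations in $k$ and $l$ over the relevant congruence classes, which (because $1\le k\le |p|-1$ and $1\le l\le |q|-1$) one then solves to force $k'=k$ and $l'=l$.

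The hard part will be showing rigorously that these angular quantities are genuinely invariant under every Reidemeister move, rather than merely under the specific shifts and switches already analyzed in Section \ref{sec:main_theorem}; in particular the type II and III moves introduce and destroy crossings, and I would need to verify that the angle read off at a crossing is transported correctly so that no rotation angle is spuriously created or lost. A secondary subtlety is the absolute-value ambiguity in the interior angle formula together with the $\bmod\, 2\pi$ ambiguity in $\theta$: I must check that the combined invariants still determine the \emph{unsigned} pair $(k,l)$ uniquely within the allowed ranges, which is where the coprimality of $p$ and $q$ and the constraints on $k,l$ must be used to rule out collisions.
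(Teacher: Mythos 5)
Your first step is already the paper's entire proof: the common second component of the colors is preserved under Reidemeister moves and planar isotopies (every new color arises as $x \ast^{\pm 1} y$, and the operation $\ast$ of $\RotE$ leaves second components unchanged), so R-equivalence forces $e^{\theta(|p|,k;|q|,l)\sqrt{-1}} = e^{\theta(|p|,k^{\prime};|q|,l^{\prime})\sqrt{-1}}$. Where you go astray is in believing this gives only one equation, insufficient to separate $k$ from $l$, so that a second independent invariant is needed. In fact the single invariant suffices. Since
\[
\theta(|p|,k;|q|,l) = \left(\frac{|p|-2k}{|p|} - \frac{|q|-2l}{|q|}\right)\pi = \frac{2\pi\,\bigl(l|p| - k|q|\bigr)}{|p||q|},
\]
equality of the second components means $l|p| - k|q| \equiv l^{\prime}|p| - k^{\prime}|q| \pmod{|p||q|}$. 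Reducing modulo $|q|$ gives $(l-l^{\prime})|p| \equiv 0 \pmod{|q|}$, and since $\gcd(p,q)=1$ and $1 \le l, l^{\prime} \le |q|-1$ this forces $l^{\prime} = l$; reducing modulo $|p|$ likewise forces $k^{\prime} = k$. So the coprimality-and-range argument you defer to the very last sentence is all that is missing, and it closes the proof with no further invariants.

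The genuine gap is in the remainder of your plan. The ``second invariant'' you propose --- the multiset of rotation angles at crossings, or holonomy around a meridian and a longitude detecting the interior angles of $P_{0}$ and $Q$ --- is never shown to be invariant under Reidemeister moves; you yourself flag this as ``the hard part,'' and as written it would need a substantial separate argument (crossings, hence the angles read off at them, are created and destroyed by RII and RIII moves, and no loop-holonomy formalism is set up in the paper). Fortunately none of it is needed: delete the search for a second invariant, insert the arithmetic above, and your proof becomes complete and coincides with the paper's one-line argument.
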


\begin{proof}
 Since the second components of the colors are preserved under Reidemeister moves and planar isotopies, we have the claim.
\end{proof}

\begin{theorem}
 \label{necessary_condition_2}
 Let $\mathscr{C}$ and $\mathscr{C}^{\prime}$ respectively be non-trivial $\RotE$-colorings of $D(p, q)$ derived from $(|p|, k; |q|, l)$-trochoids around $Q$ and $Q^{\prime}$.
 If $\mathscr{C}^{\prime}$ is R-equivalent to $\mathscr{C}$, then the side lengths of $Q$ and $Q^{\prime}$ are the same.
\end{theorem}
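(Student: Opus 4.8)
The plan is to exhibit a genuine R-equivalence invariant built from the ambient group $\mathrm{SE}(2)$ of orientation-preserving isometries of $\mathbb{E}^{2}$, and then to show that this invariant detects the side length. Since $\mathscr{C}$ and $\mathscr{C}^{\prime}$ are non-trivial, all of their colors share a common second component $e^{\theta \sqrt{-1}}$ with $\theta \neq 0$, and by Theorem \ref{necessary_condition_1} the two colorings have the same $k$ and $l$, hence the same $\theta$. For this fixed $\theta$ the assignment sending $(z, e^{\theta \sqrt{-1}})$ to the rotation about $z$ by angle $\theta$ identifies the colors with honest elements of $\mathrm{SE}(2)$, under which the quandle operation $x \ast y$ becomes the conjugation $y x y^{-1}$. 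Thus each non-trivial coloring $\mathscr{C}$ determines the subgroup $G(\mathscr{C}) \le \mathrm{SE}(2)$ generated by the colors of all arcs of $D(p, q)$.

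First I would prove that $G$ is preserved by every Reidemeister move and planar isotopy. This is the elementary heart of the argument: at an R1 move the new arc carries $x \ast x = x$; at R2 and R3 the newly created colors are conjugates and products of colors already present, and the colors that disappear are themselves products of the remaining ones; and a planar isotopy merely relabels arcs. In each case the subgroup generated by all colors of the diagram is literally unchanged, so $G(\mathscr{C})$ depends only on the R-equivalence class, and $\mathscr{C} \sim \mathscr{C}^{\prime}$ yields $G(\mathscr{C}) = G(\mathscr{C}^{\prime})$. In particular the translation subgroup $T(\mathscr{C}) := G(\mathscr{C}) \cap \mathbb{C}$ is an R-invariant. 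A direct computation shows that if $g, g^{\prime}$ are rotations by $\theta$ about $z, z^{\prime}$, then $g {g^{\prime}}^{-1}$ is the translation by $(z - z^{\prime})(1 - e^{\theta \sqrt{-1}})$; consequently $T(\mathscr{C}) = (1 - e^{\theta \sqrt{-1}}) \Lambda(\mathscr{C})$, where $\Lambda(\mathscr{C})$ is the subgroup of $\mathbb{C}$ generated by the differences of the rotation-centers, that is, of the trochoid vertices.

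It then remains to read the side length off of $T$. Here I would use that the center-configuration of any coloring derived from a $(|p|, k; |q|, l)$-trochoid is the image of a single fixed finite configuration $\Xi_{0} = \Xi_{0}(|p|, k; |q|, l)$ under a similarity whose ratio equals the side length $s$; recall that $Q$ and $P_{0}$ share the edge $w_{00} w_{01}$, so both polygons have side length $|w_{01} - w_{00}| = s$. Since $\mathscr{C}$ and $\mathscr{C}^{\prime}$ have the same combinatorial type $(|p|, k; |q|, l)$, their configurations are similar with ratio $s^{\prime}/s$, and $T(\mathscr{C}) = T(\mathscr{C}^{\prime})$ forces $s^{\prime} \Lambda_{0} = s \Lambda_{0}$ for the fixed subgroup $\Lambda_{0}$ spanned by the differences of $\Xi_{0}$; the goal is to conclude $s = s^{\prime}$.

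The main obstacle is precisely this last scale-recovery step. The translation subgroup need not be discrete, its discreteness being governed by the order of the root of unity $e^{\theta \sqrt{-1}}$ and failing outside the crystallographic cases, so one cannot simply compare a shortest vector or a covolume of $T$. I expect to resolve this by working with the explicit regular-polygon geometry rather than with the abstract subgroup: I would single out two arcs whose colors are $\theta$-rotations about adjacent vertices of $Q$ realizing the distance $s$, and exhibit the corresponding translation, of length $s\,|1 - e^{\theta \sqrt{-1}}|$, as a distinguished element detectable intrinsically from the coloring, for instance via a peripheral or longitudinal element of the associated $\mathrm{SE}(2)$-representation, whose translation length is invariant under conjugation and inversion and hence unchanged under R-equivalence. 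Showing that any similarity carrying $G(\mathscr{C})$ to $G(\mathscr{C}^{\prime})$ must preserve this distinguished length would then give $s = s^{\prime}$, completing the proof.
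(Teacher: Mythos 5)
Your first step is sound: viewing the colors as rotations in $\mathrm{SE}(2)$, the subgroup $G(\mathscr{C})$ generated by all colors is indeed unchanged by Reidemeister moves and planar isotopies (it is the image of the knot-group representation attached to the coloring), so $G(\mathscr{C})$ and its translation subgroup are R-equivalence invariants. (One small correction: conjugating a translation by a color rotates it by $\theta$, so $T(\mathscr{C})=(1-e^{\theta\sqrt{-1}})M(\mathscr{C})$ where $M(\mathscr{C})$ is the $\mathbb{Z}[e^{\theta\sqrt{-1}}]$-\emph{module}, not merely the subgroup, generated by the differences of the rotation centers.) The genuine gap is the scale-recovery step you flag at the end, and it is not just unresolved --- it provably cannot be closed from these invariants. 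Since $\theta=\frac{2(l^{\prime}p^{\prime}-k^{\prime}q^{\prime})}{p^{\prime}q^{\prime}}\pi$, the number $\zeta=e^{\theta\sqrt{-1}}$ is a root of unity, say of order $d$, and whenever $d\notin\{1,2,3,4,6\}$ the ring $\mathbb{Z}[\zeta]$ contains real units $u>1$; for $d=5$ or $10$ one may take the golden ratio $u=-\zeta_{5}^{2}-\zeta_{5}^{-2}$. (This is Dirichlet's unit theorem --- precisely the phenomenon analyzed in the appendix of this paper.) Now scale the whole trochoid about $w_{00}$ by such a $u$: this produces another $(|p|,k;|q|,l)$-trochoid whose coloring $\mathscr{C}^{\prime}$ has side length $us\neq s$, yet, writing $g_{0}$ for the rotation by $\theta$ about $w_{00}$, both groups equal $\bigl\{\tau_{v}\,g_{0}^{j} \bigm| v\in(1-\zeta)M,\ j\in\mathbb{Z}\bigr\}$ because $uM=M$ for the unit $u$; hence $G(\mathscr{C}^{\prime})=G(\mathscr{C})$ and $T(\mathscr{C}^{\prime})=T(\mathscr{C})$ although the side lengths differ. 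This situation occurs already for the $(5,2)$-torus knot, which Theorem \ref{necessary_condition_2} covers, so your invariant is strictly too weak there.

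Your proposed rescue via a peripheral or longitudinal element also fails. The longitude commutes with a meridian; the centralizer in $\mathrm{SE}(2)$ of a rotation by $\theta\neq 0$ consists exactly of the rotations about the same center; hence the longitude maps to a rotation, and conjugacy classes of rotations in $\mathrm{SE}(2)$ are determined by the angle alone --- they carry no length. More generally, conjugating the whole representation by the scaling $z\mapsto uz$ preserves every rotation angle and every piece of subgroup or conjugacy data, so no invariant of this group-theoretic kind can detect the scale. This is exactly why the paper uses a different device: it constructs a quandle $2$-cocycle $\Phi_{o}$ of $\RotE$ from \emph{signed areas} of triangles (Proposition \ref{quandle_2_cocycle}), computes the weight $W_{\Phi}\left(D(p,q),\mathscr{C}\right)=\varepsilon\left(S(P_{0})\cdot|q|-S(Q)\cdot|p|\right)$ (Proposition \ref{weight_of_Dpq}), and then uses that this R-equivalence invariant is nonzero and scales \emph{quadratically} in the side length, which immediately forces the two side lengths to agree. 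To repair your argument you would need some such second-order, area-type invariant; data invariant under $\mathrm{SE}(2)$-conjugation and similarity cannot suffice.
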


In the remaining of this section, we will prove the second theorem.
To do it, we will use weights of colorings associated with a quandle $2$-cocycle.
Thus, we start with reviewing it briefly.
More details can be found in \cite{K2017}, for example.

Let $X$ be a quandle and $A$ an abelian group.
A map $f : X \times X \to A$ is said to be a \emph{quandle $2$-cocycle} if $f$ satisfies the following two conditions:
\begin{itemize}
 \item[QC1.] For any $x, y, z \in X$, $f(x, y) + f(x \ast y, z) - f(x, z) - f(x \ast z, y \ast z) = 0$.
 \item[QC2.] For any $x \in X$, $f(x, x) = 0$.
\end{itemize}

Suppose that $f : X \times X \to A$ is a quandle $2$-cocycle.
Let $\mathscr{C}$ be an $X$-coloring of an oriented knot diagram $D$.
For each crossing $v$ of $D$, whose under and over arcs are colored by $\mathscr{C}$ as depicted in Figure \ref{coloring_condition}, we define a \emph{local weight} $W_{f}(v, \mathscr{C})$ of $\mathscr{C}$ at $v$ by  
\[
W_{f}(v, \mathscr{C}) = \varepsilon \cdot f(x, y),
\]
where $\varepsilon = + 1$ if $v$ is positive, otherwise $\varepsilon = - 1$.
Take the sum 
\[
W_{f}(D, \mathscr{C}) = \sum_{v} W_{f}(v, \mathscr{C})
\]
of the local weights over all crossings of $D$ and call it the \emph{weight} of $\mathscr{C}$.
It is easy to see that if an $X$-coloring $\mathscr{C}^{\prime}$ of $D^{\prime}$ is obtained from $\mathscr{C}$ by a finite sequence of Reidemeister moves and planar isotopies, then we have $W_{f}(D^{\prime}, \mathscr{C}^{\prime}) = W_{f}(D, \mathscr{C})$ (see Figure \ref{weight_Reidemeister_moves}).
Thus, weights of $X$-colorings which are R-equivalent to each other are the same.
\begin{figure}[htbp]
 \centering
 \includegraphics[scale=0.5]{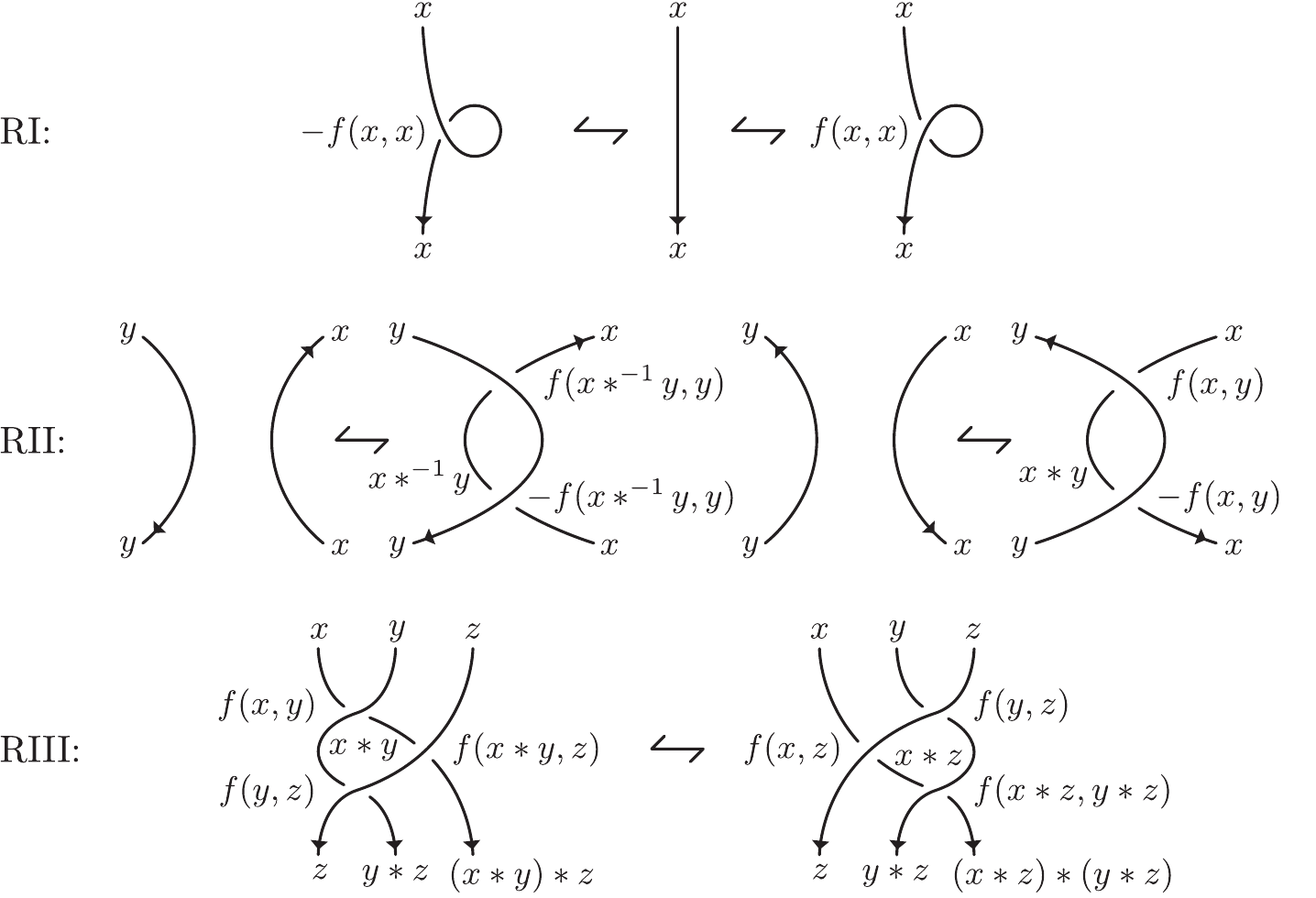}
 \caption{Reidemeister moves do not change the weights of $X$-colorings of the diagrams.}
 \label{weight_Reidemeister_moves}
\end{figure}

We next introduce a concrete quandle $2$-cocycle of $\RotE$.
Let $C_{n}(\mathbb{C})$ be the free $\mathbb{Z}$-module generated by $\mathbb{C}^{n}$ and $s : C_{3}(\mathbb{C}) \to \mathbb{R}$ a $\mathbb{Z}$-homomorphism  given by
\[
 s(x, y, z) = \delta \cdot (\text{the area of the triangle} \ (x, y, z)), 
\]
where the area of a degenerated triangle is zero, and $\delta = + 1$ if $x, y, z$ are arranged counterclockwise in this order, otherwise $\delta = - 1$.
We call $s(x, y, z)$ the \emph{signed area} of the triangle $(x, y, z)$.
We have the following two lemmas about signed area.

\begin{lemma}
 \label{sign_area}
 For each $x, y, z, w \in \mathbb{C}$ and $e^{\theta \sqrt{-1}} \in \text{U}(1)$, we have 
 \[
  s(x, y, z) = s \left( (x - w)e^{\theta \sqrt{-1}} + w, (y - w)e^{\theta \sqrt{-1}} + w, (z - w)e^{\theta \sqrt{-1}} + w \right).
 \]
\end{lemma}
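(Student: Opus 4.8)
The plan is to reduce the statement to the elementary fact that a rotation of $\mathbb{C}$ preserves signed area, and to make this rigorous by expressing the signed area of a triangle via its edge vectors. Concretely, I would first establish the identity
\[
 s(x, y, z) = \frac{1}{2} \, \mathrm{Im} \left( \overline{(y - x)} \, (z - x) \right)
\]
for all $x, y, z \in \mathbb{C}$, where $\overline{(\, \cdot \,)}$ denotes complex conjugation. The right-hand side is half of the real cross product of the edge vectors $y - x$ and $z - x$, so its absolute value is the unsigned area of the triangle, and it vanishes exactly when the three points are collinear, matching the convention that a degenerated triangle has area zero.

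The one step that requires a little care is checking that the sign of this expression agrees with the orientation factor $\delta$ in the definition of $s$, namely that it is positive precisely when $x, y, z$ are arranged counterclockwise. I would verify this on the standard configuration $(x, y, z) = (0, 1, \sqrt{-1})$, which is counterclockwise and for which the formula yields $\frac{1}{2} \, \mathrm{Im}(\sqrt{-1}) = \frac{1}{2} > 0$, and then note that the sign of a nonzero cross product is locally constant, so it stays positive throughout the connected set of counterclockwise (non-degenerate) triples. This pins down the sign convention and confirms that the displayed identity indeed computes $s(x, y, z)$ as defined.

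With the identity in hand, the conclusion is a short computation. Writing $x' = (x - w)e^{\theta \sqrt{-1}} + w$ and likewise $y', z'$, the common translation by $w$ cancels in the differences, giving $y' - x' = (y - x)e^{\theta \sqrt{-1}}$ and $z' - x' = (z - x)e^{\theta \sqrt{-1}}$. Substituting into the formula and using $\overline{e^{\theta \sqrt{-1}}} \cdot e^{\theta \sqrt{-1}} = \left| e^{\theta \sqrt{-1}} \right|^{2} = 1$, I obtain
\[
 \overline{(y' - x')} \, (z' - x') = \overline{(y - x)} \, e^{- \theta \sqrt{-1}} \, (z - x) \, e^{\theta \sqrt{-1}} = \overline{(y - x)} \, (z - x),
\]
so the imaginary parts coincide and hence $s(x', y', z') = s(x, y, z)$, which is exactly the claim. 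The only genuine obstacle is the orientation-sign verification in the second paragraph; the rest is routine manipulation of complex numbers.
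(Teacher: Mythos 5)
Your proof is correct, but it proceeds by a genuinely different route from the paper's. The paper's argument is purely geometric and takes two sentences: the triple $(x', y', z')$ is the image of $(x, y, z)$ under the $\theta$-rotation about $w$, and a rotation changes neither the area of a triangle nor the (counter)clockwise order of its vertices, so both factors $\delta$ and the area in the definition of $s$ are untouched. You instead derive the closed formula $s(x, y, z) = \frac{1}{2}\,\mathrm{Im}\bigl(\overline{(y - x)}\,(z - x)\bigr)$ and obtain the invariance by a two-line algebraic computation, replacing the paper's appeal to geometric intuition (that rotations preserve orientation) with something explicitly checkable: a sign calibration at $(0, 1, \sqrt{-1})$ plus connectedness of the set of positively oriented non-degenerate triples. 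One small addition would make that calibration airtight: you verify positivity on counterclockwise triples, but ``positive precisely when counterclockwise'' also requires negativity on clockwise triples, which follows at once from the antisymmetry $\mathrm{Im}\bigl(\overline{(z - x)}\,(y - x)\bigr) = -\,\mathrm{Im}\bigl(\overline{(y - x)}\,(z - x)\bigr)$, or by repeating the connectedness argument on the (also connected) set of clockwise triples. With that noted, both proofs are complete; the paper's is shorter and works directly from the definition of $s$, while yours is more self-contained and yields an explicit formula that could be reused in the later signed-area manipulations of the paper.
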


\begin{proof}
 Suppose that 
 \[
  x^{\prime} = (x - w)e^{\theta \sqrt{-1}} + w, \quad y^{\prime} = (y - w)e^{\theta \sqrt{-1}} + w, \quad z^{\prime} = (z - w)e^{\theta \sqrt{-1}} + w.
 \]
 Then the triangle $(x^{\prime}, y^{\prime}, z^{\prime})$ is obtained from the triangle $(x, y, z)$ by the $\theta$-rotation about $w$.
 Since the rotation does not change the area and the orders of vertices, we have the claim.
\end{proof}

\begin{lemma}
 \label{0-map}
 Define a $\mathbb{Z}$-homomorphism $\partial : C_{4}(\mathbb{C}) \to C_{3}(\mathbb{C})$ by
 \[
  \partial(x, y, z, w) = (y, z, w) - (x, z, w) + (x, y, w) - (x, y, z).
 \]
 Then $s \circ \partial$ is the $0$-map.
\end{lemma}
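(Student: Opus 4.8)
The plan is to turn the identity $s(y,z,w)-s(x,z,w)+s(x,y,w)-s(x,y,z)=0$ into a short exercise in bilinear algebra. The key observation is that the signed area can be written, for $a,b,c\in\mathbb{C}$, as
\[
 s(a,b,c)=A(b-a,\,c-a),
\]
where $A\colon\mathbb{C}\times\mathbb{C}\to\mathbb{R}$ is defined by $A(u,v)=\tfrac12\,\mathrm{Im}(\overline{u}\,v)$. I would first record (checking it against, say, the triangle $(0,1,\sqrt{-1})$ to fix the orientation convention $\delta=\pm1$, and noting that $A(u,v)=0$ exactly when $u,v$ are parallel, so the degenerate case is covered) the two structural facts that make everything work: $A$ is $\mathbb{R}$-bilinear, and it is alternating, i.e. $A(u,u)=0$ and $A(v,u)=-A(u,v)$.

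Given this, the computation is immediate. Setting $b=y-x$, $c=z-x$, $d=w-x$ and using the translation invariance of $s$ (which is built into the formula above, since it depends only on differences), the four terms become $A(c-b,d-b)$, $A(c,d)$, $A(b,d)$ and $A(b,c)$. Expanding $A(c-b,d-b)$ by bilinearity and discarding the term $A(b,b)=0$ leaves $A(c,d)-A(c,b)-A(b,d)$, and after substituting back the whole expression collapses to $-A(c,b)-A(b,c)$, which vanishes by antisymmetry. Hence $s\circ\partial$ is the zero map.

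I expect the only real care to be needed in the first paragraph rather than the second: pinning down the complex formula for $s$ together with the correct sign so that it genuinely reproduces the signed area of the lemma's definition (including the convention that a degenerate triangle has area $0$), since once $A$ is known to be bilinear and alternating the cancellation is forced. As a conceptual cross-check I would note that the statement is exactly the claim that signed area is a simplicial $2$-cocycle on the affine plane: writing $s(a,b,c)=\tfrac12\oint_{a\to b\to c\to a}(x\,dy-y\,dx)$ via Green's theorem, $s\circ\partial$ amounts to integrating this $1$-form over $\partial(\partial(x,y,z,w))$, which is the zero $1$-chain because $\partial^2=0$. This both explains why the identity should hold and guards against a stray sign in the direct computation.
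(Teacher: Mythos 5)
Your proof is correct, and it takes a genuinely different route from the paper's. The paper argues geometrically: it first checks $s\circ\partial(x,y,z,w)=0$ when the four points lie on a circle in counterclockwise order, and then asserts it is ``routine to see'' that the identity persists as the points are moved continuously and independently. You instead write $s(a,b,c)=A(b-a,c-a)$ with $A(u,v)=\tfrac12\,\mathrm{Im}(\overline{u}\,v)$, note that $A$ is $\mathbb{R}$-bilinear and alternating (and that this formula reproduces the paper's $\delta\cdot(\text{area})$ convention, including degenerate triangles), and then the identity collapses in two lines of algebra; I checked the expansion $A(c-b,d-b)-A(c,d)+A(b,d)-A(b,c)=-A(c,b)-A(b,c)=0$ and it is right. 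The comparison is instructive: the paper's verification on concyclic configurations plus a deformation step hides real work, since the concyclic $4$-tuples form a proper hypersurface in $\mathbb{C}^4$, and propagating an identity off such a set requires tracking how the orientation signs $\delta$ change under perturbation --- which is exactly the case analysis the phrase ``routine to see'' suppresses. Your computation sidesteps this entirely by exhibiting $s$ as a single polynomial in the real coordinates, so the identity holds for all configurations at once with no cases; as a byproduct it also supplies the smoothness that the paper's continuity argument implicitly relies on. Your closing remark --- that $s(a,b,c)=\tfrac12\oint_{a\to b\to c\to a}(x\,dy-y\,dx)$, so $s\circ\partial$ is an integral over $\partial\circ\partial=0$ --- is itself a complete second proof and gives the conceptual reason the lemma must hold, namely that signed area is a simplicial cocycle on the plane.
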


\begin{proof}
 Suppose first that $x$, $y$, $z$ and $w$ are arranged on a circle counterclockwise in this order.
 Then we can check that $s \circ \partial(x, y, z, w) = 0$.
 It is routine to see that this equality is still true even if we move the points $x$, $y$, $z$ and $w$ continuously and independently.
 Thus, we have the claim.
\end{proof}

Utilizing signed area, we have a quandle $2$-cocycle of $\RotE$ as follows.

\begin{proposition}
 \label{quandle_2_cocycle}
 Choose and fix $o \in \mathbb{C}$ and define a map $\Phi_{o} : \RotE \times \RotE \to \mathbb{R}$ by
 \[
  \Phi_{o} \Bigl( \left( x, e^{\theta_{1} \sqrt{-1}} \right), \left( y, e^{\theta_{2} \sqrt{-1}} \right) \Bigl)
  = - s \left( o, x, y \right) + s \left( o, (x - y)e^{\theta_{2} \sqrt{-1}} + y, y \right).
 \]
 Then $\Phi_{o}$ is a quandle $2$-cocycle of $\RotE$.
\end{proposition}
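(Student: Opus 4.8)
The plan is to check the two defining conditions QC1 and QC2 directly, using Lemmas \ref{sign_area} and \ref{0-map} as the only tools so that no explicit coordinate computation is needed. Throughout I would write an element $a = (x, e^{\theta_{1}\sqrt{-1}})$ of $\RotE$ as the rotation about $x$ by angle $\theta_{1}$, so that for $b = (y, e^{\theta_{2}\sqrt{-1}})$ the first component of $a \ast b$ is exactly the image $b(x) := (x-y)e^{\theta_{2}\sqrt{-1}} + y$ of the centre $x$ under the rotation $b$, while the angle is unchanged. In this language the cocycle reads $\Phi_{o}(a,b) = s(o, b(x), y) - s(o, x, y)$, a difference of two signed-area triangles sharing the vertices $o$ and $y$. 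Condition QC2 is then immediate: taking $b = a$ forces $b(x) = x$, so both triangles coincide and $\Phi_{o}(a,a) = 0$.

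For QC1 I would first record the one genuinely algebraic input, namely that $\ast$ is conjugation of rotations, so that $(b \ast c)\bigl(c(x)\bigr) = c\bigl(b(x)\bigr)$. Expanding the four terms of $\Phi_{o}(a,b) + \Phi_{o}(a\ast b, c) - \Phi_{o}(a, c) - \Phi_{o}(a \ast c, b \ast c)$, with $a,b,c$ having centres $x,y,z$, and using this identity turns the last term into $s(o, c(b(x)), c(y)) - s(o, c(x), c(y))$. The whole expression thereby becomes an alternating sum of eight signed-area triangles, all with apex $o$. I would then split these eight triangles into the four in which no vertex has been moved by $c$ and the four in which the relevant vertices have been hit by $c$.

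The heart of the argument is to collapse each group of four separately. Applying Lemma \ref{0-map} to the quadruples $(o, x, b(x), y)$ and $(o, x, b(x), z)$ rewrites each difference $s(o, b(x), P) - s(o, x, P)$ as the central triangle $s(x, b(x), P)$ minus the common term $s(o, x, b(x))$; since the common terms cancel in pairs, the first group reduces to $s(x, b(x), y) - s(x, b(x), z)$. The same manipulation applied to the $c$-images, followed by Lemma \ref{sign_area} to strip the rotation $c$ off all three vertices (legitimate because $c$ fixes its own centre, so $c(z) = z$), shows the second group equals $-s(x, b(x), y) + s(x, b(x), z)$. The two groups cancel, and the expression in QC1 vanishes.

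The step I expect to be the main obstacle is organisational rather than conceptual: keeping the indices, the two-component structure, and above all the signs straight while expanding the four terms, and making sure the conjugation identity $(b \ast c)(c(x)) = c(b(x))$ is applied to the correct vertex so that the fourth term lines up with the others. Everything else is a mechanical application of the two lemmas, and I would double-check the sign conventions in QC1 against Figure \ref{coloring_condition} before committing to the final pairing.
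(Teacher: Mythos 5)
Your proof is correct and follows essentially the same route as the paper: QC2 is immediate, and QC1 is verified by combining Lemma \ref{sign_area} (rotation invariance of signed area) with Lemma \ref{0-map} (vanishing of $s \circ \partial$), which are exactly the two tools the paper uses. The only difference is bookkeeping --- the paper packages the QC1 alternating sum as a combination of four vanishing $s \circ \partial$ terms, whereas you collapse the eight triangles in two groups via the boundary identity and cancel the groups against each other using the conjugation identity $(b \ast c)(c(x)) = c(b(x))$ together with Lemma \ref{sign_area}; this is the same computation organized differently.
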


\begin{proof}
 Obviously, $\Phi_{o}$ satisfies the condition QC2. 
 Thus, we check that $\Phi_{o}$ satisfies the condition QC1.
 Let $(x, e^{\theta_{1} \sqrt{-1}}), (y, e^{\theta_{2} \sqrt{-1}}), (z, e^{\theta_{3} \sqrt{-1}}) \in \RotE$.
 It is routine to check in light of Lemma \ref{sign_area} that we have
 \begin{align*}
  & \hspace{1.3em} \Phi_{o} 
     \Bigl( \left( x, e^{\theta_{1} \sqrt{-1}} \right), \left( y, e^{\theta_{2} \sqrt{-1}} \right) \Bigl) \\
  & \hspace{1.3em} \quad +\Phi_{o} 
     \Bigl( \left( x, e^{\theta_{1} \sqrt{-1}} \right) \ast \left( y, e^{\theta_{2} \sqrt{-1}} \right),
             \left( z, e^{\theta_{3} \sqrt{-1}} \right) \Bigl) \\
  & \hspace{1.3em} \qquad -\Phi_{o} 
     \Bigl( \left( x, e^{\theta_{1} \sqrt{-1}} \right), \left( z, e^{\theta_{3} \sqrt{-1}} \right) \Bigl) \\
  & \hspace{1.3em} \quad \qquad -\Phi_{o} 
     \Bigl( \left( x, e^{\theta_{1} \sqrt{-1}} \right) \ast \left( z, e^{\theta_{3} \sqrt{-1}} \right),
             \left( y, e^{\theta_{2} \sqrt{-1}} \right) \ast \left( z, e^{\theta_{3} \sqrt{-1}} \right) \Bigl) \\
   & = s \circ \partial (o, x, y, z) \\
   & \hspace{1.3em} \quad +s \circ \partial \left( o, \left( (x - y)e^{\theta_{2} \sqrt{-1}} + y - z \right)
                                            e^{\theta_{3} \sqrt{-1}} + z, (y - z)e^{\theta_{3} \sqrt{-1}} + z, z \right) \\
   & \hspace{1.3em} \qquad -s \circ \partial \left( o, (x - y)e^{\theta_{2} \sqrt{-1}} + y, y, z \right) \\
   & \hspace{1.3em} \quad \qquad -s \circ \partial 
                      \left( o, (x - z)e^{\theta_{3} \sqrt{-1}} + z, (y - z)e^{\theta_{3} \sqrt{-1}} + z, z \right).
 \end{align*}
 By Lemma \ref{0-map}, the right-hand side of the above equality is equal to zero.
\end{proof}

We now prove Theorem \ref{necessary_condition_2} utilizing the above quandle 2-cocycle.
Let \linebreak $P = (v_{0}, \cdots, v_{m - 1})$ denote a polygon in $\mathbb{C}$ whose vertices are $v_{0}, \cdots, v_{m - 1}$ in this order.
We define the \emph{signed area} of $P$ by
\[
  S(P) = S(v_{0}, \cdots, v_{m - 1}) = \textstyle\sum\limits_{i = 0}^{m - 1} s(o, v_{i}, v_{[i + 1]_{m}})
\]
with some $o \in \mathbb{C}$.
It is routine to see that the value of $S(P)$ does not depend on the choice of $o$.
We note that $S(v_{0}, v_{1}, v_{2}) = s(v_{0}, v_{1}, v_{2})$ for a triangle $(v_{0}, v_{1}, v_{2})$.
We have the following key proposition.

\begin{proposition}
\label{weight_of_Dpq}
 Let $\mathscr{C}$ be a non-trivial $\RotE$-coloring of $D(p, q)$ derived from a $(|p|, k; |q|, l)$-trochoid around $Q$ starting at $P_{0}$ and $o$ the center of (the convex hull of the vertices of) $Q$.
 Then we have 
 \[
  W_{\Phi_{o}} \left( D(p, q),\mathscr{C} \right) =
  \varepsilon \left( S(P_{0}) \cdot |q| -S(Q) \cdot |p| \right),
 \]
 where $\varepsilon = + 1$ if $pq > 0$, otherwise $\varepsilon = - 1$.
\end{proposition}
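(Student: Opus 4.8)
The plan is to compute the weight $W_{\Phi_{o}}(D(p,q),\mathscr{C})$ directly by summing the local weights over all crossings of $D(p,q)$, and to recognize the resulting sum as a telescoping collection of signed areas that reorganizes into the two polygon areas $S(P_0)$ and $S(Q)$. First I would recall from Theorem \ref{coloring_and_trochoid} the explicit colors $\mathscr{C}(a_{ij}) = (w_{i,[i+j+1]_{|p|}}, e^{\theta\sqrt{-1}})$ assigned to the arcs. At a crossing of $D(p,q)$ the under-arc has some color $(x, e^{\theta\sqrt{-1}})$ and the over-arc has color $(y, e^{\theta\sqrt{-1}})$, so by the definition of $\Phi_o$ the local weight is
\[
 W_{\Phi_o}(v,\mathscr{C}) = \varepsilon_v \left( -s(o,x,y) + s\bigl(o, (x-y)e^{\theta\sqrt{-1}}+y, y\bigr)\right),
\]
where $x$, $y$ are the first components of the two colors at $v$. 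The key geometric input is that, by the construction of the trochoid, the point $(x-y)e^{\theta\sqrt{-1}}+y$ is exactly the vertex obtained by rotating $x$ about $y$ through the trochoid angle $\theta$, i.e.\ it is the first component of the color $x \ast y$ of the outgoing under-arc. Thus each local weight is a difference of two triangle areas sharing the common apex $o$ and the common base endpoint $y$.

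Next I would organize the crossings row by row according to the index $i$ (the $|q|$ copies of the polygon $P_i$) and column by column according to $j$ (the $|p|$ crossings within each row). For fixed $i$, summing the $-s(o,x,y)$ terms over the $|p|$ crossings in that row should reproduce $-S(P_i)$ up to sign and orientation, because the under-arcs in a single row cycle through the consecutive vertices $w_{i,[i+j+1]_{|p|}}$ of the polygon $P_i$, and the sum $\sum_j s(o, v_j, v_{[j+1]_{|p|}})$ is by definition $S(P_i)$. Meanwhile the rotated terms $s(o,(x-y)e^{\theta\sqrt{-1}}+y, y)$ should, after reindexing, contribute the area of the polygon $Q$ traced by the pivot vertices $v_{[i]_{|q|}} = w_{i,[i]_{|p|}}$ as $i$ runs over $0,\dots,|q|-1$. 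Since each $P_i$ is a rotate of $P_0$ and signed area is rotation-invariant (Lemma \ref{sign_area}), every $S(P_i)=S(P_0)$; summing over the $|q|$ rows yields the factor $|q|$ in front of $S(P_0)$. The $|p|$ crossings that contribute to the $Q$-area each count the same $S(Q)$, producing the factor $|p|$. The global sign $\varepsilon$ distinguishing $pq>0$ from $pq<0$ comes from the uniform sign of all crossings in $D(p,q)$, which depends only on whether $p$ and $q$ have the same sign.

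The main obstacle I expect is the careful bookkeeping of the two telescoping sums: I must verify that the rotated apex $(x-y)e^{\theta\sqrt{-1}}+y$ really coincides with the next trochoid vertex (so that the second areas assemble into $S(Q)$ rather than into some unrelated configuration), and I must track the index shifts $[i+j+1]_{|p|}$, $[i]_{|q|}$, $[i+1]_{|q|}$ carefully so that the boundary edges of the rows match up and the telescoping closes correctly around each polygon. In particular, showing that the ``vertical'' contributions reorganize into $|p|$ copies of $S(Q)$ requires using the trochoid relation $w_{i,[i]_{|p|}} = v_{[i]_{|q|}}$ identifying the pivot vertices of successive polygons $P_{i-1}, P_i$ with the vertices of $Q$, together with the convention (Theorem \ref{coloring_and_trochoid}) that $a_{i0}$ and $a_{[i+1]_{|q|},|p|-1}$ denote the same arc, which is what glues the end of one row to the start of the next and makes the count of $Q$-contributions equal to $|p|$. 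Once the identification of apices and the index tracking are pinned down, the invariance of signed area under rotation and the additivity in Lemma \ref{0-map} reduce everything to the stated closed form.
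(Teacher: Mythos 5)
Your setup (the explicit colors, the identification of the rotated point $(x-y)e^{\theta\sqrt{-1}}+y$ with the vertex $w_{[i+1]_{|q|},j}$ of the next polygon, and the uniform crossing sign producing $\varepsilon$) matches the paper, but the core of your argument --- the claimed telescoping of row sums into $-S(P_i)$ and of column sums into $S(Q)$ --- does not work, and this is exactly where the difficulty of the proposition lies. At every crossing in ``row'' $i$, the over-arc color is the \emph{same} pivot vertex $w_{i,[i+1]_{|p|}}$; hence the row sum of the first terms is the fan sum $-\sum_j s\bigl(o, w_{ij}, w_{i,[i+1]_{|p|}}\bigr)$ with fixed second argument, not the edge sum $-\sum_j s\bigl(o, w_{ij}, w_{i,[j+1]_{|p|}}\bigr)$ that defines $-S(P_i)$. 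A fan of triangles from $o$ to the polygon's vertices and one fixed vertex is not the polygon's signed area (it is not even independent of $o$), so ``the under-arcs cycle through consecutive vertices'' does not justify the identification. Likewise, the rotated terms $s\bigl(o, w_{[i+1]_{|q|},j}, w_{i,[i+1]_{|p|}}\bigr)$ have a varying first vertex, so summing them over $i$ does not produce $S(Q)$: the pivots alone trace $Q$, but the triangles involved are not the triangles $\bigl(o, v_i, v_{[i+1]_{|q|}}\bigr)$.

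What the paper actually does after this common setup is genuinely different. It first collapses the $j$-sum to $0 \leq j \leq p'-1$ with a multiplicity factor $\gcd(|p|,k)$ (when $\gcd(|p|,k)\neq 1$ the vertices repeat, $w_{ij_1}=w_{ij_2}$ for $j_1 \equiv j_2 \bmod p'$ --- an issue your plan does not address), and reduces the $i$-sum to the rows $i=0,1$ by rotation invariance, since each $P_i$ is the $\tfrac{2\pi l i}{|q|}$-rotation of $P_0$ about $o$; this is what produces the factor $|q|$. It then needs two reflection symmetries special to the regular-polygon geometry --- reflection through the line $ow_{01}$, giving $s(o,w_{1j},w_{01}) = -s(o,w_{0,[2-j]_{p'}},w_{01})$, and reflection through the perpendicular bisector of $w_{00}w_{01}$ --- together with congruences obtained by rotating about the center of $P_0$, to evaluate $2\sum_j s(o,w_{01},w_{0j})$ as $-p'\,s(o,w_{00},w_{01}) + S(P_0)/\gcd(|p|,k)$, and finally uses $s(o,w_{00},w_{01}) = S(Q)/|q|$. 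None of these symmetry arguments appear in your proposal, and Lemmas \ref{sign_area} and \ref{0-map} alone cannot substitute for them: the quantity you must compute is a sum of $o$-dependent fan areas whose value only becomes geometric after these cancellations are exhibited.
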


\begin{proof}
 Let $p^{\prime} = \frac{|p|}{\gcd(|p|, k)}$, $k^{\prime} = \frac{k}{\gcd(|p|, k)}$ and $\theta = \theta(|p|, k; |q|, l)$.
 Suppose that $\mathscr{C}$ is given by 
 \[
  \mathscr{C}(a_{ij}) = \left( w_{i, [i + j + 1]_{|p|}}, e^{\theta \sqrt{-1}} \right),
 \]
 as ever.
 Then, in light of the axiom QC2 of a quandle 2-cocycle, we have 
 \begin{align}
  & \hspace{1.3em} W_{\Phi_{o}} \left( D(p, q), \mathscr{C} \right) \notag \\[0.5ex]
  & = \varepsilon \left( 
                   \ \sum_{\substack{0 \leq i \leq |q| - 1, \\ 0 \leq j \leq |p| - 1}}
                   \Phi_{o} \left( \left( w_{ij}, e^{\theta \sqrt{-1}} \right), 
                                   \left( w_{i, [i + 1]_{|p|}}, e^{\theta \sqrt{-1}} \right) 
                            \right) \right.
                    \notag \\
  & \hspace{1.3em} \qquad \left. - \sum_{\substack{0 \leq i \leq |q| - 1 \\ \phantom{|}}}
                     \Phi_{o} \left( \left( w_{i, [i + 1]_{|p|}}, e^{\theta \sqrt{-1}} \right),
                                     \left( w_{i, [i + 1]_{|p|}}, e^{\theta \sqrt{-1}} \right) \right) 
                  \right)
                  \notag \\[0.5ex]
  & = \varepsilon \ \sum_{\substack{0 \leq i \leq |q| - 1, \\ 0 \leq j \leq |p| - 1}}
                   \Phi_{o} \left( \left( w_{ij}, e^{\theta \sqrt{-1}} \right), 
                                   \left( w_{i, [i + 1]_{|p|}}, e^{\theta \sqrt{-1}} \right) 
                            \right).
                    \label{eq:A}
 \end{align}
 Furthermore, remarking that $w_{i j_{1}} = w_{i j_{2}}$ if $j_{1} \equiv j_{2} \bmod p^{\prime}$, we have
 \begin{align}
   (\ref{eq:A})
      & = \varepsilon \gcd(|p|, k) 
             \sum_{\substack{0 \leq i \leq |q| - 1, \\ 0 \leq j \leq p^{\prime} - 1}}
             \Phi_{o} \left( \left( w_{ij}, e^{\theta \sqrt{-1}} \right),
                              \left( w_{i, [i + 1]_{p^{\prime}}}, e^{\theta \sqrt{-1}} \right) 
                      \right)
              \notag \\
      & = \varepsilon \gcd (|p|, k)
           \left(
            \  \sum_{\substack{0 \leq i \leq |q| - 1, \\ 0 \leq j \leq p^{\prime} - 1}}
             - s(o, w_{ij}, w_{i,[i + 1]_{p^{\prime}}}) \right.
             \notag \\
      & \hspace{1.3em} \quad \quad \quad \quad \quad \quad \left. +\sum_{\substack{0 \leq i \leq |q| - 1, \\ 0 \leq j \leq p^{\prime} - 1}}
              s \left( o, ( w_{ij} - w_{i, [i + 1]_{p^{\prime}}}) e^{\theta \sqrt{-1}} + w_{i, [i + 1]_{p^{\prime}}}, w_{i, [i + 1]_{p^{\prime}}} \right)
           \right) \notag \\
      & = \varepsilon \gcd (|p|, k)
           \left(
            \  \sum_{\substack{0 \leq i \leq |q| - 1, \\ 0 \leq j \leq p^{\prime} - 1}}
             - s(o, w_{ij}, w_{i,[i + 1]_{p^{\prime}}}) \right.
             \notag \\
      & \hspace{1.3em} \quad \quad \quad \quad \quad \quad \left. +\sum_{\substack{0 \leq i \leq |q| - 1, \\ 0 \leq j \leq p^{\prime} - 1}}
              s(o, w_{[i + 1]_{|q|}, j}, w_{i, [i + 1]_{p^{\prime}}})
           \right).
                     \label{eq:B}
 \end{align}
 Since the regular polygon  
 $(w_{i, [i]_{p^{\prime}}}, w_{i, [i + 1]_{p^{\prime}}}, \cdots, w_{i, [i + |p| - 1]_{p^{\prime}}})$ 
 of type $(|p|, k)$ is obtained from $P_{0}$ by the $\frac{2 \pi li}{|q|}$-rotation about $o$, we have
 \begin{align}
   (\ref{eq:B}) & = \varepsilon |q| \gcd (|p|, k) 
           \left( \ 
            \sum_{0 \leq j \leq p^{\prime} - 1}  - s(o, w_{0j}, w_{01}) \hspace{1.5ex}
            +\sum_{0 \leq j \leq p^{\prime}-1} s(o, w_{1j}, w_{01}) 
           \right)
            \label{eq:C}
 \end{align}
 in light of Lemma \ref{sign_area}.
 We furthermore have $s(o, w_{1j}, w_{01}) = - s(o, w_{0, [2 - j]_{p^{\prime}}}, w_{01})$, because $(o, w_{0, [2 - j]_{p^{\prime}}}, w_{01})$ is obtained from $(o, w_{1j}, w_{01})$ by the reflection through the line $ow_{01}$.
 Thus, we have
 \begin{align}
  (\ref{eq:C}) & = \varepsilon |q| \gcd (|p|, k) 
           \left(
            \ \sum_{0 \leq j \leq p^{\prime} - 1}
              - s(o, w_{0j}, w_{01}) \hspace{1.5ex}
             +\sum_{0 \leq j \leq p^{\prime} - 1}
              - s(o, w_{0, [2 - j]_{p^{\prime}}}, w_{01})
          \right)
           \notag \\
      & = \varepsilon |q| \gcd (|p|,k) 
           \left(
            \ \sum_{0 \leq j \leq p^{\prime} - 1}
             - s(o, w_{0j}, w_{01}) \hspace{1.5ex}
            +\sum_{0 \leq j \leq p^{\prime} - 1}
             - s(o, w_{0j}, w_{01})
          \right) 
           \notag \\
      & = \varepsilon |q| \gcd (|p|, k) \cdot
            2 \sum_{0 \leq j \leq p^{\prime} - 1} s(o, w_{01}, w_{0j}).
            \label{eq:D}
 \end{align}
 We may rewrite 
 $\displaystyle 2 \sum_{0 \leq j \leq p^{\prime} - 1} s(o, w_{01}, w_{0j})$ as follows.
 \begin{align}
  & \hspace{1.3em} 2\sum_{0 \leq j \leq p^{\prime}-1} s(o, w_{01}, w_{0j})
     \notag \\
  & = 2s(o, w_{01}, w_{00})
     + 2s(o, w_{01}, w_{01})
     + 2 \sum_{2 \leq j \leq p^{\prime} - 1} s(o, w_{01}, w_{0j})
      \notag \\
  & = - 2s(o, w_{00}, w_{01})
     + \sum_{2 \leq j \leq p^{\prime} - 1} 
     \left( s(o, w_{01}, w_{0j}) + s(o, w_{01}, w_{0, [1 - j]_{p^{\prime}}}) 
     \right).
     \label{eq:E}
 \end{align}
 Since $(o, w_{01}, w_{0, [1 - j]_{p^{\prime}}})$ is obtained from $(o, w_{00}, w_{0j})$ by the reflection  through the perpendicular bisector of the line segment $w_{00} w_{01}$, we have $s(o, w_{01}, w_{0, [1 - j]_{p^{\prime}}}) = - s (o, w_{00}, w_{0j})$.
 Thus, we have
 \begin{align}
  (\ref{eq:E}) & = - 2s(o, w_{00}, w_{01})
                   + \sum_{2 \leq j \leq p^{\prime} - 1} 
                   \left( 
                    s(o, w_{01}, w_{0j}) - s(o, w_{00}, w_{0j}) 
                   \right)
                   \notag \\
               & = - 2s(o, w_{00}, w_{01})
                   + \sum_{2 \leq j \leq p^{\prime} - 1} S(o, w_{01}, w_{0j}, w_{00}).
                   \label{eq:F}
 \end{align} 
 Let $o^{\prime}$ be the center of (the convex hull of the vertices of) $P_{0}$ and 
 $o_{j}$ the point obtained from $o$ by the $\frac{2 \pi k^{\prime} j}{p^{\prime}}$-rotation 
 about $o^{\prime}$ for each $j$ ($0 \leq j \leq p^{\prime} - 1$).
 Then obviously $(o, w_{01}, w_{0j}, w_{00})$ is congruent to $(o_{[2 - j]_{p^{\prime}}}, w_{0, [3 - j]_{p^{\prime}}}, w_{02}, w_{0, [2 - j]_{p^{\prime}}})$ preserving the orders of vertices.
 Thus, remarking that $(o_{[2 - j]_{p^{\prime}}}, w_{0, [2 - j]_{p^{\prime}}}, w_{0, [3 - j]_{p^{\prime}}})$ is congruent to $(o, w_{00}, w_{01})$ preserving the orders of vertices, we have
 \begin{align}
  (\ref{eq:F}) & = - 2s(o, w_{00}, w_{01})
                   + \sum_{2 \leq j \leq p^{\prime} - 1} 
                    S(o_{[2 - j]_{p^{\prime}}}, w_{0, [3 - j]_{p^{\prime}}},
                             w_{02}, w_{0, [2 - j]_{p^{\prime}}})
                    \notag \\
               & = - 2s(o, w_{00}, w_{01})
                   + \sum_{2 \leq j \leq p^{\prime} - 1}
                    s(o_{[2 - j]_{p^{\prime}}}, w_{0, [3 - j]_{p^{\prime}}},
                             w_{0, [2 - j]_{p^{\prime}}})
                    \notag \\
               & \hspace{1.3em} \quad + \sum_{2 \leq j \leq p^{\prime} - 1}
                    s(w_{02}, w_{0, [2 - j]_{p^{\prime}}}, w_{0, [3 - j]_{p^{\prime}}})
                    \notag \\
               & = - 2s(o, w_{00}, w_{01})
                   - \sum_{2 \leq j \leq p^{\prime} - 1}
                    s(o_{[2 - j]_{p^{\prime}}}, w_{0, [2 - j]_{p^{\prime}}},
                             w_{0, [3 - j]_{p^{\prime}}})
                   + \frac{S(P_{0})}{\gcd(|p|, k)}
                    \notag \\
               & = - p^{\prime} \cdot s(o, w_{00}, w_{01})
                   + \frac{S(P_{0})}{\gcd(|p|, k)}
                    \notag \\
               & = - p^{\prime} \cdot \frac{S(Q)}{|q|}
                   + \frac{S(P_{0})}{\gcd(|p|, k)}.
                   \label{eq:G}
 \end{align}
 We obtain the claim substituting (\ref{eq:G}) in (\ref{eq:D}).
\end{proof}
 
\noindent
We note in the above proof that, since we have
\begin{align*}
& \hspace{2.5ex} \sum_{\substack{0 \leq i \leq |q| - 1, \\ 0 \leq j \leq p^{\prime} - 1}}
             \left(
             - s(o, w_{ij}, w_{i, [i + 1]_{p^{\prime}}})
             + s(o, w_{[i + 1]_{|q|}, j}, w_{i, [i + 1]_{p^{\prime}}})
             \right) \\
&=\sum_{\substack{0 \leq i \leq |q| - 1, \\ 0 \leq j \leq p^{\prime} - 1}}
             \left(
             - s(o, w_{ij}, w_{i, [i + 1]_{p^{\prime}}})
             + s(o, w_{[i + 1]_{|q|}, j}, w_{i, [i + 1]_{p^{\prime}}})
             \right. \\
&             \hspace{20ex}
             \left.
             - s(o, w_{[i + 1]_{|q|}, j}, w_{ij})
             + s(o, w_{[i + 1]_{|q|}, j}, w_{ij})
             \right) \\
&=\sum_{\substack{0 \leq i \leq |q| - 1, \\ 0 \leq j \leq p^{\prime} - 1}}
             \left(
             - s(o, w_{ij}, w_{i, [i + 1]_{p^{\prime}}})
             - s(o, w_{i, [i + 1]_{p^{\prime}}}, w_{[i + 1]_{|q|}, j})
             \right. \\
&             \hspace{20ex}
             \left.
             - s(o, w_{[i + 1]_{|q|}, j}, w_{ij})
             - s(o, w_{ij}, w_{[i + 1]_{|q|}, j})
             \right) \\
&= - \sum_{\substack{0 \leq i \leq |q| - 1, \\ 0 \leq j \leq p^{\prime} - 1}}
             S(w_{ij}, w_{i, [i + 1]_{p^{\prime}}}, w_{[i + 1]_{|q|}, j})
   - \sum_{\substack{0 \leq j \leq p^{\prime} - 1}}
             S(w_{0j}, w_{1j}, \cdots, w_{|q| - 1, j}),
\end{align*}
the value of (\ref{eq:B}), that is the value of $W_{\Phi_{o}} \left( D(p, q), \mathscr{C} \right)$, does not depend on the choice of $o$
\footnote{We can check that, for any 2-cycle $Z$ of $\RotE$, the value of $\Phi_{o}(Z)$ does not depend on the choice of $o$ in a similar way.}.
Thus, we write $W_{\Phi_{o}} \left( D(p, q), \mathscr{C} \right)$ as $W_{\Phi}(D(p, q), \mathscr{C})$ in the remaining.

\begin{proof}[Proof of Theorem \ref{necessary_condition_2}]
 Let $d$ and $d^{\prime}$ be the side lengths of $Q$ and $Q^{\prime}$ respectively.
 Then, in light of Proposition \ref{weight_of_Dpq}, we have 
 \[
  W_{\Phi}(D(p, q), \mathscr{C}^{\prime})
  = \left( \frac{d^{\prime}}{d} \right)^{2} W_{\Phi}(D(p, q), \mathscr{C}).
 \]
 We can check by a straightforward calculation that $W_{\Phi}(D(p, q), \mathscr{C}) \neq 0$.
 Thus, we know that $W_{\Phi}(D(p, q), \mathscr{C}^{\prime}) \neq W_{\Phi}(D(p, q), \mathscr{C})$ 
 if $d^{\prime} \neq d$.
\end{proof}

\section{Proof of Theorem \ref{main_thm}}
\label{sec:proof_of_main_theorem}

We devote this section to proving Theorem \ref{main_thm}.
Throughout this section, we let $p^{\prime} = \frac{|p|}{\gcd(|p|, k)}$, $q^{\prime} = \frac{|q|}{\gcd(|q|, l)}$, $k^{\prime} = \frac{k}{\gcd(|p|, k)}$, $l^{\prime} = \frac{l}{\gcd(|q|, l)}$ and $\theta = \theta(|p|, k; |q|, l)$.
Furthermore, we assume that $\mathscr{C}_{0}(a_{ij}) = \mathscr{C}(Q^{0}, P^{0}_{0})(a_{ij}) = (w_{i, [i + j + 1]_{|p|}}, e^{\theta \sqrt{-1}})$ and the side length of $Q^{0}$ is equal to $1$ scaling the coordinate system of the complex plane if necessary.
In light of Theorems \ref{necessary_condition_1} and \ref{necessary_condition_2}, we only need to investigate non-trivial $\RotE$-colorings of $D(p, q)$ derived from $(|p|, k; |q|, l)$-trochoids around polygons whose side lengths are $1$ to determine the R-equivalence class of $\mathscr{C}_{0}$.

Let $\alpha$ be $\frac{p^{\prime}q^{\prime}}{2}$ if $p^{\prime}q^{\prime}$ is even, otherwise $p^{\prime}q^{\prime}$, and consider the set $\mathscr{V}$ consisting of the $2 \alpha$ complex numbers
\[
v_{\sigma} = \cos \left( \arg (w_{01} - w_{00}) + \frac{\sigma \pi}{\alpha} \right) + \sqrt{-1} \sin \left( \arg (w_{01} - w_{00}) + \frac{\sigma \pi}{\alpha} \right),
\]
where $\sigma$ satisfies $0 \leq \sigma \leq 2 \alpha - 1$.
Furthermore, we inductively construct a set $\mathscr{W}_{1}$ as follows.
We first let $\mathscr{W}_{1}$ be the set consisting of $w_{00}$.
We next enlarge the set $\mathscr{W}_{1}$ by adding $w + v_{\sigma}$ to $\mathscr{W}_{1}$ for each $w \in \mathscr{W}_{1}$ and $v_{\sigma} \in \mathscr{V}$, repeatedly.
Consequently, $\mathscr{W}_{1}$ coincides with the set $\displaystyle \biggl\{ w_{00} + \sum_{0 \leq \sigma \leq 2 \alpha - 1} a_{\sigma} v_{\sigma} \biggm| a_{\sigma} \in \mathbb{Z} \biggr\}$.

\begin{proposition}
 \label{W_1}
 The set $\mathscr{W}_{1}$ coincides with the set, say $\mathscr{W}_{2}$, of points in $\mathbb{C}$ each of which some coloring obtained from $\mathscr{C}_{0}$ by shifts and even times of switches uses as the first component of a color of an arc of $D(p, q)$.
\end{proposition}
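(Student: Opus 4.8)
The plan is to prove the two inclusions $\mathscr{W}_2 \subseteq \mathscr{W}_1$ and $\mathscr{W}_1 \subseteq \mathscr{W}_2$ separately. Throughout I write $u = \cos(\arg(w_{01}-w_{00})) + \sqrt{-1}\,\sin(\arg(w_{01}-w_{00}))$ for the unit vector along the shared edge $w_{00}w_{01}$ and $\zeta_m = e^{2\pi\sqrt{-1}/m}$. First I would record two elementary facts about $L := \langle \mathscr{V}\rangle_{\mathbb{Z}}$, the subgroup of $\mathbb{C}$ generated by $\mathscr{V}$. (i) Since the angular spacing $\pi/\alpha$ of the $v_\sigma$ divides both $2\pi/p'$ and $2\pi/q'$ (because $2\alpha/p'$ and $2\alpha/q'$ are integers in either parity case), the rotations by $2\pi/p'$ and $2\pi/q'$ about the origin permute $\mathscr{V}$, so $L$ is stable under multiplication by $\zeta_{p'}$ and $\zeta_{q'}$. (ii) Every edge vector of $P_0$ (of direction $\arg u + 2\pi j/p'$) and of $Q$ (of direction $\arg u + 2\pi j/q'$) is one of the $v_\sigma$, hence lies in $L$. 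Together these identify $L = u\,\mathbb{Z}[\zeta_{p'q'}]$.

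For $\mathscr{W}_2 \subseteq \mathscr{W}_1$: every reachable coloring is $\mathscr{C}(Q^{*},P^{*})$ for polygons $Q^{*},P^{*}$ that are copies of $Q$ and $P_0$ obtained by rotations through multiples of $2\pi/p'$ or $2\pi/q'$. In any such coloring two consecutive trochoid vertices differ by an edge vector of such a rotated copy, hence by an element of $L$ by facts (i) and (ii); and consecutive trochoid polygons share a vertex. Thus all first components of a single coloring lie in one coset of $L$, and since $w_{00}$ occurs among them this coset is $w_{00}+L$. A shift leaves the family of trochoid polygons unchanged, while a switch replaces it by the family of the dual trochoid, whose polygons are again rotated copies sharing vertices with the old ones; so the coset $w_{00}+L$ is preserved along any sequence of shifts and switches. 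Taking the union over all reachable colorings gives $\mathscr{W}_2 \subseteq w_{00}+L = \mathscr{W}_1$.

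For the reverse inclusion I would interpret the moves group-theoretically. Within $\mathscr{C}_0$ the realized points already form the orbit $\langle \rho_Q\rangle\cdot V(P_0)$, where $\rho_Q$ is the rotation by $2\pi/q'$ about the center $o_Q$ of $Q$ and $V(P_0)$ is the vertex set of $P_0$; in particular the edges of the rotated copies $P_i$ realize every generator $u\zeta_{p'}^{a}\zeta_{q'}^{b}$ of $L$. The role of the switches is to make the center of rotation itself move: a switch exchanges the two polygons and turns the center of the trochoid from $o_Q$ into the center $o_P$ of $P_0$, and, crucially, the re-bracketing $\langle\,\cdot\,\rangle$ selects a \emph{different} shared edge, so a block (switch, shifts, switch) relocates the configuration by a nontrivial isometry rather than fixing $o_P$. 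I would show that, as the intermediate number of shifts varies and such blocks are iterated, the realized isometries are exactly the orientation-preserving elements of the group $\Gamma = \langle \rho_P, \rho_Q\rangle$ generated by the two rotations; the restriction to an \emph{even} number of switches is precisely what keeps us orientation-preserving, since a single switch passes through the mirror image. Consequently $\mathscr{W}_2 = \Gamma\cdot\mathscr{W}_0 \supseteq N\cdot w_{00} = w_{00}+N$, where $N$ is the translation subgroup of $\Gamma$, and it remains to prove $N = L$.

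The main obstacle is this last identification. By the easy inclusion $N \subseteq L$, so the content is $N \supseteq L$. Here I would compute in $\Gamma$: placing $o_P$ at the origin, the commutator $\rho_P\rho_Q\rho_P^{-1}\rho_Q^{-1}$ is the translation by $(\zeta_{p'}-1)(1-\zeta_{q'})\,o_Q$, and conjugation by $\rho_P$ and $\rho_Q$ shows that $N$ is a $\mathbb{Z}[\zeta_{p'q'}]$-submodule of $L$ containing this element; together with the translation $g_0^{\,p'q'}$ arising from a word $g_0$ whose linear part is the primitive root $\zeta_{p'q'}$, these should generate $N$. The remaining step is the geometric–arithmetic computation that the module so generated is all of $u\,\mathbb{Z}[\zeta_{p'q'}]$: this uses the explicit relation between $o_Q-o_P$, the edge direction $u$, and the circumradii $\tfrac{1}{2}\csc(\pi/p')$ and $\tfrac{1}{2}\csc(\pi/q')$ of the two unit-edged polygons, and it is exactly here that the parity hypothesis on $p'q'$ (through the definition of $\alpha$, hence of $\mathscr{V}$) is needed to pin down $N = L$ rather than a proper submodule. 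I expect verifying this equality — and, before it, the clean identification of the set of realizable isometries with the orientation-preserving part of $\Gamma$ — to be the hardest and most delicate part of the argument.
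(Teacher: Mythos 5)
Your first inclusion ($\mathscr{W}_2 \subseteq \mathscr{W}_1$) is essentially correct and is in substance the same as the paper's: the paper proves it by induction on the sequence of moves, maintaining the invariant (its condition $(\heartsuit)$) that all polygon vertices lie in $\mathscr{W}_1$ and all polygon edges are parallel to segments $0v$ with $v \in \mathscr{V}$; your coset-of-$L$ formulation with the shared-vertex chain is the same invariant. (Your phrasing ``copies of $Q$ and $P_0$ obtained by rotations through multiples of $2\pi/p^{\prime}$ or $2\pi/q^{\prime}$'' is not quite accurate---the configurations also get translated, and the relevant rotation angle for the moves that actually displace the picture is $\theta = \frac{2(l^{\prime}p^{\prime}-k^{\prime}q^{\prime})}{p^{\prime}q^{\prime}}\pi$---but the invariant you actually use survives this correction.)

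The reverse inclusion, however, has genuine gaps, and this is the half that carries all the content. First, your characterization of the realizable isometries as ``the orientation-preserving elements of $\Gamma = \langle \rho_P, \rho_Q\rangle$'' is both internally inconsistent and based on a false premise: $\Gamma$ is generated by two rotations, so every element of it is already orientation-preserving; and a switch does \emph{not} pass through a mirror image of the configuration in the plane. The paper is explicit that $P_{0}$ and $\langle P_{0} \rangle$ (and $Q$ and $\langle Q \rangle_{0}$) occupy the same points of $\mathbb{C}$---only the combinatorial type and vertex ordering reverse. The evenness restriction on switches is there because an odd number of switches yields a coloring of $D(q,p)$ rather than $D(p,q)$, not because of orientation. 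Consequently, the only moves that displace the configuration are the composites (switch, shift, switch, shift)---the paper's \emph{fundamental deformations}---which are $\theta$-rotations about vertices of $Q$, not rotations by $2\pi/p^{\prime}$ about $o_P$ or $2\pi/q^{\prime}$ about $o_Q$; so the identification of the realized group with your $\Gamma$ would itself need a proof you do not supply. Second, the decisive step $N = L$ is exactly what you leave open (``the hardest and most delicate part''), so the argument is a plan rather than a proof; and your expectation that the parity hypothesis on $p^{\prime}q^{\prime}$ is what pins this down is misplaced: Proposition \ref{W_1} carries no parity hypothesis and is proved in the paper for both parities (the definition of $\alpha$, hence of $\mathscr{V}$, already adjusts for parity); parity enters only later, in the proof of Theorem \ref{main_thm}.

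For comparison, the paper avoids any computation of an isometry group or of a translation lattice. It shows (Lemma \ref{fundamental_deformation_3}) that if $Q$ has an edge from the center point $c$ to $c+v_{\sigma}$, then iterated fundamental deformations rotate that edge through the orbit of $v_{\sigma}$ under multiplication by $e^{\theta\sqrt{-1}}$, and---in the odd case using the second edge of $Q$ at $c$, whose direction lies in the complementary orbit---these orbits cover all of $\mathscr{V}$. Combined with Remark \ref{fundamental_deformation_2} (any vertex of $Q$ can be made the center point by shifts), one walks the lattice step by step: from center point $w$ reach all $w+v$ ($v \in \mathscr{V}$), move the center point to $w+v$, and repeat. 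This inductive edge-propagation is the idea your proposal is missing, and it is what makes the proof work uniformly in both parity cases.
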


To prove this proposition, we prepare the following key deformation of $D(p,q)$.
Apply a switch, a shift, a switch and a shift to $D(p, q)$ in this order.
Then we obtain $D(p, q)$ again.
We call this sequence of deformations a \emph{fundamental deformation} of $D(p, q)$.
Let $\mathscr{C} = \mathscr{C}(Q, P_{0})$ be a non-trivial $\RotE$-coloring of $D(p, q)$ derived from a $(|p|, k; |q|, l)$-trochoid, and $\mathscr{C}^{\prime} = \mathscr{C}(Q^{\prime}, P^{\prime}_{0})$ the one of $D(p, q)$ obtained from $\mathscr{C}$ by a fundamental deformation, which is R-equivalent to $\mathscr{C}$.
Figure \ref{c1_c2} illustrates the diagrams of trochoids, from which colorings appearing in the sequence giving a fundamental deformation are derived, in a case of $p = 4$, $q = 3$, $k = 1$ and $l = 2$, for example.
We also call the deformation of the diagram of the trochoid from which $\mathscr{C}$ is derived to the one from which $\mathscr{C}^{\prime}$ is derived a \emph{fundamental deformation}.
It is easy to see that the diagram of the trochoid from which $\mathscr{C}^{\prime}$ is derived is obtained from the one from which $\mathscr{C}$ is derived by a ``$\theta$-rotation'' about a vertex $c$ of $Q$ (see Figure \ref{c1_c2}).
We call this vertex $c$ the \emph{center point} of the fundamental deformation of the diagram of the trochoid from which $\mathscr{C}$ is derived, or of $\mathscr{C}$ for short.
\begin{figure}[htbp]
 \centering
 \includegraphics[scale=0.5]{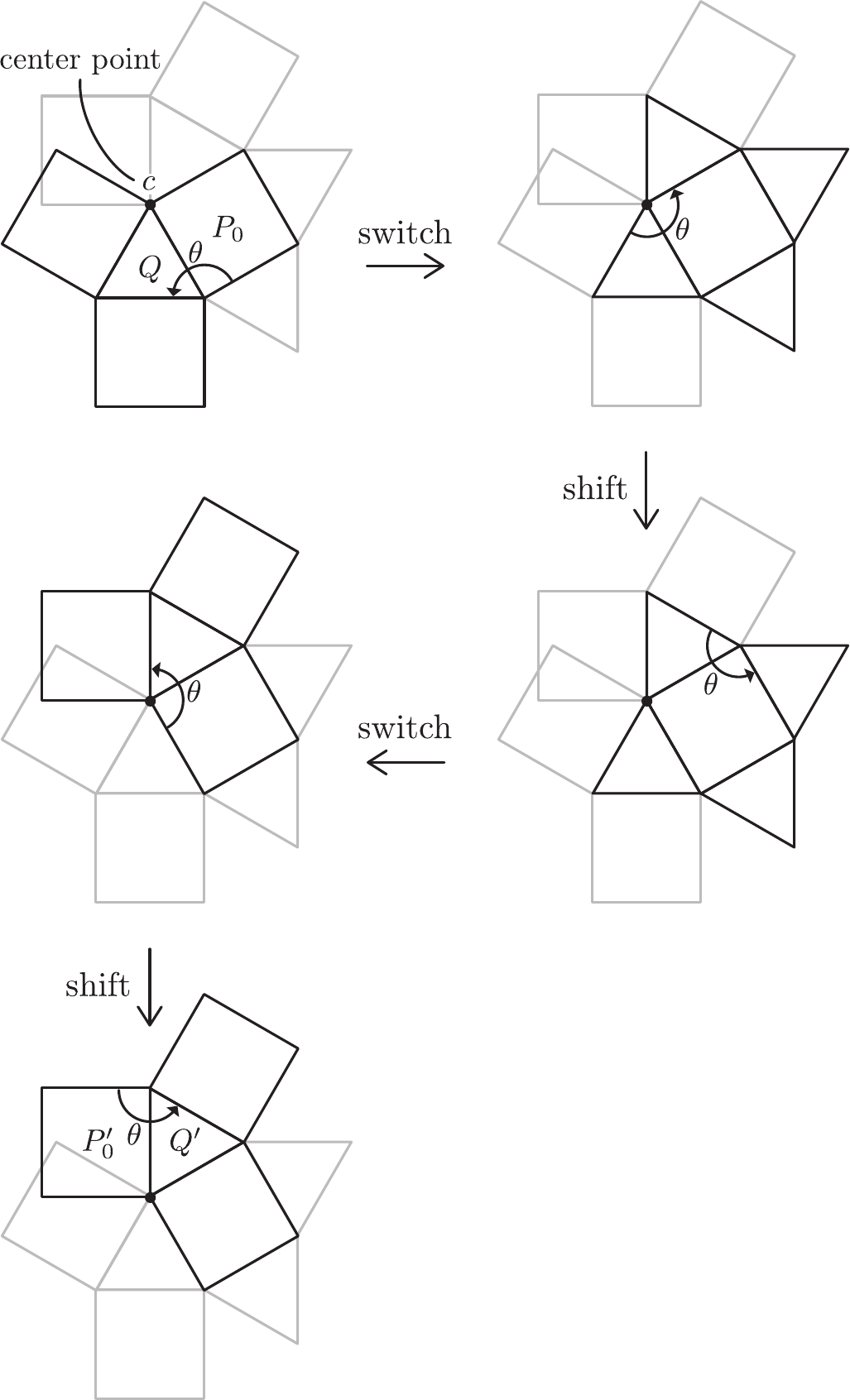}
 \caption{The diagrams of the trochoids, from which colorings appearing in the sequence giving a fundamental deformation are derived.}
 \label{c1_c2}
\end{figure}

\begin{remark}
 \label{fundamental_deformation_2}
 We can set any vertex of $Q$ as the center point of a fundamental deformation of a diagram of a trochoid, applying adequate shifts to the diagram of the trochoid from which $\mathscr{C}$ is derived before the fundamental deformation.
 It means that, for each vertex of $Q$, we have a non-trivial $\RotE$-coloring of $D(p,q)$ whose center point is the vertex obtained from $\mathscr{C}$ by several times of shifts.
\end{remark}

\begin{remark}
 \label{fundamental_deformation_1}
 We note that serial fundamental deformations of diagrams of trochoids share the same center point.
 Since $l^{\prime}p^{\prime} - k^{\prime}q^{\prime}$ and $p^{\prime}q^{\prime}$ are coprime, remarking that $\theta = \frac{2 (l^{\prime}p^{\prime} - k^{\prime}q^{\prime})}{p^{\prime}q^{\prime}} \pi$, we do not obtain the diagram of a $(|p|, k; |q|, l)$-trochoid again from itself by $n$-times of fundamental deformations if $0 < n < p^{\prime}q^{\prime}$.
\end{remark}

\begin{lemma}
 \label{fundamental_deformation_3}
 Let $\mathscr{C} = \mathscr{C}(Q, P_{0})$ be a non-trivial $\RotE$-coloring of $D(p, q)$ derived from a $(|p|, k; |q|, l)$-trochoid.
 Assume that $Q$ has an edge connecting the center point $c$ of $\mathscr{C}$ and $c + v_{\sigma}$ \textup{(}$v_{\sigma} \in \mathscr{V}$\textup{)}.
 Then, for each $v \in \mathscr{V}$, we obtain a non-trivial $\RotE$-coloring $\mathscr{C}(Q^{\prime}, P^{\prime}_{0})$ from $\mathscr{C}$ by several times of fundamental deformations so that $Q^{\prime}$ has an edge connecting $c$ and $c + v$.
\end{lemma}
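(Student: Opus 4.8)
The plan is to track what a fundamental deformation does to the single edge of $Q$ emanating from the center point $c$, and to reduce the lemma to a statement about the orbit of a residue under addition in $\mathbb{Z}/2\alpha\mathbb{Z}$. Recall from the discussion preceding Remark \ref{fundamental_deformation_1} that a fundamental deformation carries the diagram of the trochoid from which $\mathscr{C}$ is derived to the one from which the new coloring is derived by the $\theta$-rotation about $c$, and that by Remark \ref{fundamental_deformation_1} serial fundamental deformations share this same center point $c$. Consequently, applying $n$ fundamental deformations to $\mathscr{C}$ amounts to rotating $Q$ by the angle $n\theta$ about the fixed point $c$, which maps the regular polygon of type $(|q|, l)$ and side length $1$ to another such polygon $Q^{(n)}$ still having $c$ as a vertex. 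Since a rotation carries edges to edges, the edge of $Q$ connecting $c$ and $c + v_{\sigma}$ is carried to the edge of $Q^{(n)}$ connecting $c$ and $c + e^{n\theta \sqrt{-1}} v_{\sigma}$.

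First I would check that each $e^{n\theta \sqrt{-1}} v_{\sigma}$ again lies in $\mathscr{V}$ and identify the index it carries. Since $v_{\sigma}$ has argument $\arg(w_{01} - w_{00}) + \frac{\sigma \pi}{\alpha}$, this reduces to showing that $\theta$ is an integer multiple of the angular spacing $\frac{\pi}{\alpha}$ of $\mathscr{V}$. Working in the setting of Theorem \ref{main_thm}, where $p^{\prime} q^{\prime}$ is even and hence $2\alpha = p^{\prime} q^{\prime}$, the identity $\theta = \frac{2(l^{\prime} p^{\prime} - k^{\prime} q^{\prime})}{p^{\prime} q^{\prime}} \pi$ from Remark \ref{fundamental_deformation_1} gives $\theta = (l^{\prime} p^{\prime} - k^{\prime} q^{\prime}) \cdot \frac{\pi}{\alpha}$. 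Thus one fundamental deformation shifts the index of the edge direction at $c$ by $d := l^{\prime} p^{\prime} - k^{\prime} q^{\prime}$ modulo $2\alpha$; that is, $Q^{(n)}$ has the edge connecting $c$ and $c + v_{[\sigma + nd]_{2\alpha}}$.

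It then remains to see that these indices exhaust all of $\mathscr{V}$. By Remark \ref{fundamental_deformation_1}, $l^{\prime} p^{\prime} - k^{\prime} q^{\prime}$ and $p^{\prime} q^{\prime} = 2\alpha$ are coprime, so $d$ is a unit in $\mathbb{Z}/2\alpha\mathbb{Z}$; hence the set $\{ [\sigma + nd]_{2\alpha} \mid n \in \mathbb{Z} \}$ is all of $\mathbb{Z}/2\alpha\mathbb{Z}$. Given a target $v = v_{\sigma^{\prime}} \in \mathscr{V}$, I would choose $n$ with $\sigma + nd \equiv \sigma^{\prime} \pmod{2\alpha}$; then the coloring $\mathscr{C}(Q^{(n)}, P^{(n)}_{0})$ obtained from $\mathscr{C}$ by $n$ fundamental deformations has $Q^{(n)}$ carrying an edge connecting $c$ and $c + v$, which is exactly the conclusion with $Q^{\prime} = Q^{(n)}$ and $P^{\prime}_{0} = P^{(n)}_{0}$.

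The main obstacle I anticipate is the bookkeeping in the first two paragraphs rather than any deep difficulty: one must verify cleanly that the $\theta$-rotation about $c$ keeps $c$ a vertex bearing an edge of unit length in direction $e^{n\theta \sqrt{-1}} v_{\sigma}$, and that each intermediate configuration is genuinely the polygon of a coloring reached by a fundamental deformation, so that the shared-center-point statement of Remark \ref{fundamental_deformation_1} applies at every step and the total effect really is rotation by $n\theta$. Once the rotation angle is pinned to the lattice $\mathscr{V}$ of directions via $\theta = d \cdot \frac{\pi}{\alpha}$, the conclusion is a one-line consequence of the coprimality of $l^{\prime} p^{\prime} - k^{\prime} q^{\prime}$ with $2\alpha$.
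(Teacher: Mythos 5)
Your reduction of the lemma to the orbit of the index $\sigma$ under a fixed shift in $\mathbb{Z}/2\alpha\mathbb{Z}$ is exactly the right mechanism, and in the case where $p^{\prime}q^{\prime}$ is even your argument agrees with the paper's: there $2\alpha = p^{\prime}q^{\prime}$, the shift per fundamental deformation is $d = l^{\prime}p^{\prime} - k^{\prime}q^{\prime}$, and coprimality of $d$ with $p^{\prime}q^{\prime}$ makes the orbit of $v_{\sigma}$ all of $\mathscr{V}$ (the paper phrases the same fact as a cardinality comparison: the orbit $\mathscr{V}_{\sigma}$ has $p^{\prime}q^{\prime}$ mutually different elements inside the $2\alpha$-element set $\mathscr{V}$).

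However, there is a genuine gap: the lemma carries no parity hypothesis, while you explicitly restrict to ``the setting of Theorem \ref{main_thm}, where $p^{\prime}q^{\prime}$ is even.'' The full-strength statement is what the paper actually uses: it feeds, through Corollary \ref{fundamental_deformation_4}, into Proposition \ref{W_1}, which is stated for both parities, and the odd-parity analysis in this very proof is what Remark \ref{odd_case} later relies on. When $p^{\prime}q^{\prime}$ is odd one has $\alpha = p^{\prime}q^{\prime}$, hence $\theta = 2\left( l^{\prime}p^{\prime} - k^{\prime}q^{\prime} \right)\cdot \frac{\pi}{\alpha}$, so the index shift per fundamental deformation is the \emph{even} number $\beta = 2\left( l^{\prime}p^{\prime} - k^{\prime}q^{\prime} \right)$ modulo $2\alpha = 2p^{\prime}q^{\prime}$; the orbit of $\sigma$ then consists only of indices with the same parity as $\sigma$, i.e.\ exactly half of $\mathscr{V}$, and your single-edge argument cannot reach the other half. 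The paper closes this case by bringing in the second edge of $Q$ incident to $c$: it connects $c$ to $c + v_{\tau}$ with $\tau = \left[ \sigma \pm \left( (q^{\prime}-1)\cdot \frac{2l^{\prime}\alpha}{q^{\prime}} + \alpha \right) \right]_{2\alpha}$, and in the odd case $(q^{\prime}-1)\cdot \frac{2l^{\prime}\alpha}{q^{\prime}} + \alpha$ is odd, so $\tau$ has parity opposite to $\sigma$ and the two orbits satisfy $\mathscr{V} = \mathscr{V}_{\sigma} \sqcup \mathscr{V}_{\tau}$, together exhausting all directions. To repair your proof you would need to add this second-edge argument (or some substitute) covering the odd case.
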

 
\begin{proof}
 It is easy to see under the assumption that $Q$ also has an edge connecting $c$ and $c+v_{\tau}$.
 Here, $\tau$ is equal to either $\Bigl[ \sigma + (q^{\prime} - 1) \cdot \frac{2 l^{\prime} \alpha}{q^{\prime}} + \alpha \Bigr]_{2 \alpha}$ or $\Bigl[ \sigma - (q^{\prime} - 1) \cdot \frac{2 l^{\prime} \alpha}{q^{\prime}} - \alpha \Bigr]_{2 \alpha}$.
 We note that $v_{\tau}$ and $v_{\sigma}$ are the same if $q^{\prime} = 2$.
 By a similar argument in Remark \ref{fundamental_deformation_1}, applying $\theta$-rotations about $0$ to $v_{\sigma}$ and $v_{\tau}$ in a row, we obtain mutually different $p^{\prime}q^{\prime}$ complex numbers respectively.
 We let $\mathscr{V}_{\sigma}$ and $\mathscr{V}_{\tau}$ respectively denote the sets of those $p^{\prime}q^{\prime}$ complex numbers obtained from $v_{\sigma}$ and $v_{\tau}$.
 It is easy to see that a $\theta$-rotation about $0$ maps $v_{\mu} \in \mathscr{V}$ to $v_{[\mu + \beta]_{2 \alpha}}$, where $\beta = \frac{2 l^{\prime} \alpha}{q^{\prime}} - \frac{2 k^{\prime} \alpha}{p^{\prime}}$.
 Therefore, since $v_{\sigma}$ and $v_{\tau}$ are elements of $\mathscr{V}$, $\mathscr{V}_{\sigma}$ and $\mathscr{V}_{\tau}$ are subsets of $\mathscr{V}$.
 If $p^{\prime}q^{\prime}$ is even, comparing cardinality of the sets, we know that $\mathscr{V}_{\sigma} = \mathscr{V}_{\tau} = \mathscr{V}$.
 If $p^{\prime}q^{\prime}$ is odd, since $\sigma$ and $\tau$ have different parities (because $(q^{\prime} - 1) \cdot \frac{2 l^{\prime} \alpha}{q^{\prime}} + \alpha$ is odd) and $\beta$ is even, we know that $\mathscr{V}_{\sigma} \cap \mathscr{V}_{\tau} = \emptyset$.
 Thus, $\mathscr{V} = \mathscr{V}_{\sigma} \sqcup \mathscr{V}_{\tau}$.
 
 Suppose that $v$ is an element of $\mathscr{V}$.
 If $v$ is an element of $\mathscr{V}_{\sigma}$ (respectively of $\mathscr{V}_{\tau}$), applying adequate fundamental deformations to the diagram of the trochoid from which $\mathscr{C}$ is derived, we can map the edge connecting $c$ and $c + v_{\sigma}$ (respectively $c + v_{\tau}$) to the one connecting $c$ and $c+v$.
 It means that we can obtain a non-trivial $\RotE$-coloring $\mathscr{C}(Q^{\prime}, P^{\prime}_{0})$ of $D(p,q)$ from $\mathscr{C}$ by several times of fundamental deformations so that $Q^{\prime}$ has an edge connecting $c$ and $c + v$.
 Since $\mathscr{V} = \mathscr{V}_{\sigma} \cup \mathscr{V}_{\tau}$, we have the claim.
\end{proof}

In light of Lemma \ref{fundamental_deformation_3}, we immediately have the following corollary.

\begin{corollary}
 \label{fundamental_deformation_4}
 Assume that $\mathscr{C} = \mathscr{C}(Q, P_{0})$ is obtained from $\mathscr{C}_{0}$ by shifts and even times of switches, and $Q$ has an edge connecting the center point $c$ of $\mathscr{C}$ and $c + v_{\sigma}$ \textup{(}$v_{\sigma} \in \mathscr{V}$\textup{)}.
 Then, for each $v \in \mathscr{V}$, $c + v$ is an element of $\mathscr{W}_{2}$.
\end{corollary}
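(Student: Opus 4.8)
The plan is to combine Lemma \ref{fundamental_deformation_3} with a short bookkeeping of which elementary deformations make up a fundamental deformation, and then to read off the conclusion through Remark \ref{how_to_know_all_colors}.

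First I would observe that a single fundamental deformation is, by definition, a switch, a shift, a switch and a shift applied in this order; in particular it uses exactly two switches. Hence it preserves the property of being obtained from $\mathscr{C}_{0}$ by shifts and an even number of switches: if $\mathscr{C}$ enjoys this property, then so does any coloring obtained from $\mathscr{C}$ by several fundamental deformations, since we merely append pairs of switches together with some shifts. By Remark \ref{fundamental_deformation_1} the center point $c$ is the same for every diagram appearing in such a chain, so $c$ remains a vertex of the relevant polygon throughout.

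Next I would invoke Lemma \ref{fundamental_deformation_3}. By hypothesis $Q$ has an edge joining the center point $c$ of $\mathscr{C}$ to $c + v_{\sigma}$, so the lemma applies: for each $v \in \mathscr{V}$ there is a non-trivial $\RotE$-coloring $\mathscr{C}(Q^{\prime}, P^{\prime}_{0})$, obtained from $\mathscr{C}$ by several fundamental deformations, whose polygon $Q^{\prime}$ has an edge joining $c$ and $c + v$. By the previous paragraph this $\mathscr{C}(Q^{\prime}, P^{\prime}_{0})$ is still obtained from $\mathscr{C}_{0}$ by shifts and even times of switches.

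Finally, since $Q^{\prime}$ has an edge joining $c$ and $c + v$, the point $c + v$ is a vertex of $Q^{\prime}$. By Remark \ref{how_to_know_all_colors} every vertex of the polygon around which the trochoid is built occurs as the first component of the color of some arc of $D(p, q)$ under $\mathscr{C}(Q^{\prime}, P^{\prime}_{0})$. Therefore $c + v$ is a point that some coloring obtained from $\mathscr{C}_{0}$ by shifts and even times of switches uses as a first component, that is $c + v \in \mathscr{W}_{2}$, as required. There is essentially no obstacle here, which is why the corollary is \emph{immediate}; the only point deserving a moment's care is that the two switches inside each fundamental deformation return us to a diagram of $D(p, q)$ rather than $D(q, p)$ and keep $c$ a vertex throughout, which is exactly built into the definition of a fundamental deformation and into Remark \ref{fundamental_deformation_1}.
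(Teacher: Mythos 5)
Your proof is correct and is exactly the argument the paper leaves implicit: the paper states the corollary as an immediate consequence of Lemma \ref{fundamental_deformation_3}, and your write-up simply supplies the missing bookkeeping (each fundamental deformation contributes two switches, so parity is preserved; the center point persists by Remark \ref{fundamental_deformation_1}; and the vertex $c+v$ of $Q^{\prime}$ shows up as a first component of a color by Remark \ref{how_to_know_all_colors}). Nothing in your argument deviates from the intended route, and no step is missing.
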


\begin{proof}[Proof of Propositon \ref{W_1}]
 We first prove inductively that $\mathscr{W}_{1}$ is a subset of $\mathscr{W}_{2}$.
 Since $w_{00}$ is the center point of $\mathscr{C}_{0}$ and $Q^{0}$ has an edge connecting $w_{00}$ and $w_{01} = w_{00} + v_{0}$, in light of Corollary \ref{fundamental_deformation_4}, we know that $w_{00} + v$ is an element of $\mathscr{W}_{2}$ for each $v \in \mathscr{V}$.
 
 For some $w \in \mathscr{W}_{1}$ and $v \in \mathscr{V}$, assume that we obtain a non-trivial $\RotE$-coloring $\mathscr{C} = \mathscr{C}(Q, P_{0})$ from $\mathscr{C}_{0}$ by shifts and even times of switches so that $w$ is the center point of $\mathscr{C}$ and $Q$ has an edge connecting $w$ and $w^{\prime} = w + v$.
 As mentioned in Remark \ref{fundamental_deformation_2}, applying adequate shifts to $\mathscr{C}$, we obtain a non-trivial $\RotE$-coloring $\mathscr{C}^{\prime} = \mathscr{C}(Q^{\prime}, P^{\prime}_{0})$ of $D(p,q)$ whose center point is $w^{\prime}$.
 We note that $Q^{\prime}$ has an edge connecting $w^{\prime}$ and $w$ since $Q^{\prime}$ coincides with $Q$.
 Since $w = w^{\prime} - v$ and $- v \in \mathscr{V}$, in light of Corollary \ref{fundamental_deformation_4}, we know that $w^{\prime} + v^{\prime}$ is an element of $\mathscr{W}_{2}$ for each $v^{\prime} \in \mathscr{V}$.
 
 We next prove inductively that $\mathscr{W}_{2}$ is a subset of $\mathscr{W}_{1}$.
 To do it, we consider the following condition for a non-trivial $\RotE$-coloring $\mathscr{C} = \mathscr{C}(Q, P_{0})$ of $D(p, q)$ or $D(q, p)$:
 \begin{itemize}
  \item[$(\heartsuit)$] Each first component of colors of the arcs of $D(p, q)$ or $D(q, p)$ by $\mathscr{C}$ is an element of $\mathscr{W}_{1}$.
  Furthermore, each edge of $Q$ and $P_{i}$ is parallel to some line segment $0v$ ($v \in \mathscr{V}$).
 \end{itemize}
 Since the line segment $w_{00}w_{01}$ is parallel to the line segment $0 v_{0}$, it is routine to see that the line segment $w_{i, [i + j]_{|p|}} w_{i, [i + j + 1]_{|p|}}$ is parallel to $0 v_{[\gamma]_{2 \alpha}}$, where $\gamma = i \cdot \frac{2 l^{\prime} \alpha}{q^{\prime}} + j \cdot \frac{2 k^{\prime} \alpha}{p^{\prime}}$.
 Therefore, since $w_{00}$ is an element of $\mathscr{W}_{1}$, we know that $w_{i, [i + j + 1]_{|p|}}$ is an element of $\mathscr{W}_{1}$.
 Thus, $\mathscr{C}_{0}$ satisfies the condition $(\heartsuit)$.
 
 Let $\mathscr{C} = \mathscr{C}(Q, P_{0})$ be a non-trivial $\RotE$-coloring of $D(p, q)$ or $D(q, p)$ obtained from $\mathscr{C}_{0}$ by shifts and $n$-times of switches.
 Assume that $\mathscr{C}$ satisfies the condition $(\heartsuit)$.
 Suppose that $\mathscr{C}^{\prime} = \mathscr{C}(Q^{\prime}, P^{\prime}_{0})$ is a non-trivial $\RotE$-coloring obtained from $\mathscr{C}$ by a shift or a switch.
 If $\mathscr{C}^{\prime}$ is obtained from $\mathscr{C}$ by a shift, since $Q^{\prime}$ and $P^{\prime}_{i}$ respectively coincide with $Q$ and $P_{i+1}$, $\mathscr{C}^{\prime}$ obviously satisfies the condition $(\heartsuit)$.
 Assume that $\mathscr{C}^{\prime}$ is obtained from $\mathscr{C}$ by a switch.
 We note that this is the $(n+1)$-th switch in total.
 Since $Q^{\prime}$ and $P_{0}$ have the same shape, the vertices of $Q^{\prime}$ are elements of $\mathscr{W}_{1}$ and each edge of $Q^{\prime}$ is parallel to some line segment $0 v$ ($v \in \mathscr{V}$).
 Since each $P^{\prime}_{i}$ shares an edge with $Q^{\prime}$, we may assume that the edge is parallel to some line segment $0 v_{\sigma}$ ($v_{\sigma} \in \mathscr{V}$).
 Then each edge of $P^{\prime}_{i}$ is parallel to one of the line segments $0 v_{[\sigma + \tau \delta]_{2 \alpha}}$, where $\delta = \frac{2 l^{\prime} \alpha}{q^{\prime}}$ and $0 \leq \tau \leq |q| - 1$ if $n$ is even, otherwise $\delta = \frac{2 k^{\prime} \alpha}{p^{\prime}}$ and $0 \leq \tau \leq |p| - 1$.
 Therefore, $\mathscr{C}^{\prime}$ also satisfies the condition $(\heartsuit)$.
\end{proof}

\begin{proposition}
 \label{W_2}
 The set $\mathscr{W}_{2}$ coincides with the set, say $\mathscr{W}_{3}$, of points in $\mathbb{C}$ each of which some coloring obtained from $\mathscr{C}_{0}$ by a finite sequence of Reidemeister moves and planar isotopies uses as the first component of a color of an arc of a diagram of the $(p, q)$-torus knot.
\end{proposition}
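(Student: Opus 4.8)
The plan is to prove the two inclusions $\mathscr{W}_{2} \subseteq \mathscr{W}_{3}$ and $\mathscr{W}_{3} \subseteq \mathscr{W}_{2}$, using the identity $\mathscr{W}_{1} = \mathscr{W}_{2}$ from Proposition \ref{W_1} for the second one. The inclusion $\mathscr{W}_{2} \subseteq \mathscr{W}_{3}$ is immediate: a shift is a planar isotopy and a switch is realized by a finite sequence of Reidemeister moves and planar isotopies, so any coloring of $D(p, q)$ obtained from $\mathscr{C}_{0}$ by shifts and even times of switches is in particular obtained from $\mathscr{C}_{0}$ by a finite sequence of Reidemeister moves and planar isotopies. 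Since $D(p, q)$ is a diagram of the $(p, q)$-torus knot, every first component it uses lies in $\mathscr{W}_{3}$.

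The substance is the reverse inclusion $\mathscr{W}_{3} \subseteq \mathscr{W}_{1}$. First I would record the algebraic structure of $\mathscr{W}_{1}$: writing $L$ for the additive subgroup $\bigl\{ \sum_{\sigma} a_{\sigma} v_{\sigma} \mid a_{\sigma} \in \mathbb{Z} \bigr\}$ of $\mathbb{C}$ generated by $\mathscr{V}$, the set $\mathscr{W}_{1} = w_{00} + L$ is a single coset of $L$. The key observation is that $\mathscr{W}_{1}$ is closed under the quandle operations of $\RotE$ once the common second component $e^{\theta \sqrt{-1}}$ is fixed. Indeed, for $z, w \in \mathscr{W}_{1}$ the first component of $(z, e^{\theta \sqrt{-1}}) \ast (w, e^{\theta \sqrt{-1}})$ equals $w + (z - w)e^{\theta \sqrt{-1}}$, and since $z - w \in L$ while a $\theta$-rotation about $0$ permutes $\mathscr{V}$, sending $v_{\mu}$ to $v_{[\mu + \beta]_{2 \alpha}}$ with $\beta = \frac{2 l^{\prime} \alpha}{q^{\prime}} - \frac{2 k^{\prime} \alpha}{p^{\prime}}$ as noted in the proof of Lemma \ref{fundamental_deformation_3}, we get $(z - w)e^{\theta \sqrt{-1}} \in L$ and hence $w + (z - w)e^{\theta \sqrt{-1}} \in w_{00} + L = \mathscr{W}_{1}$. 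As the $\theta$-rotation is a bijection of the finite set $\mathscr{V}$, the same computation with $e^{- \theta \sqrt{-1}}$ shows that $\mathscr{W}_{1}$ is closed under $\ast^{-1}$ as well.

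With closure in hand, I would argue by induction on the length of the deforming sequence. Every coloring reachable from $\mathscr{C}_{0}$ has all of its second components equal to $e^{\theta \sqrt{-1}}$ (Theorem \ref{necessary_condition_1}, following Inoue), so at each crossing the outgoing color is obtained from the incoming ones by $\ast$ or $\ast^{-1}$ with the fixed angle $\theta$. A planar isotopy assigns the same colors to the corresponding arcs and so introduces no new first component, while a single Reidemeister move leaves the colors of all arcs unrelated to the move unchanged and assigns to each newly created arc a color of the form $x \ast y$ or $x \ast^{-1} y$ built from colors already present. Since the first components of $\mathscr{C}_{0}$ all lie in $\mathscr{W}_{1}$, which is exactly the content of the condition $(\heartsuit)$ verified in the proof of Proposition \ref{W_1}, the closure of $\mathscr{W}_{1}$ under $\ast$ and $\ast^{-1}$ propagates this property along the whole sequence: after each move every arc of the current diagram is still colored by an element whose first component lies in $\mathscr{W}_{1}$. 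Therefore every first component appearing in any coloring reachable from $\mathscr{C}_{0}$ lies in $\mathscr{W}_{1}$, giving $\mathscr{W}_{3} \subseteq \mathscr{W}_{1} = \mathscr{W}_{2}$ and hence $\mathscr{W}_{2} = \mathscr{W}_{3}$.

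I expect the point requiring the most care to be the verification that each of the three Reidemeister moves, together with their mirror and orientation variants, genuinely creates only $\ast$- and $\ast^{-1}$-combinations of already-present colors, so that the inductive step applies uniformly; the closure of $\mathscr{W}_{1}$ under rotation by $\pm \theta$ is the algebraic crux, but it is settled once and for all by the permutation of $\mathscr{V}$ recalled above.
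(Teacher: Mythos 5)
Your proof is correct and follows essentially the same route as the paper: the easy inclusion $\mathscr{W}_{2} \subseteq \mathscr{W}_{3}$, then induction along the sequence of Reidemeister moves and planar isotopies, with the key fact that a $\pm\theta$-rotation permutes $\mathscr{V}$ (via $v_{\mu} \mapsto v_{[\mu \pm \beta]_{2\alpha}}$) so that new colors $x \ast^{\pm 1} y$ stay in $\mathscr{W}_{1} = \mathscr{W}_{2}$. Packaging this as a closure property of the coset $w_{00} + L$ before the induction, rather than performing the computation inline as the paper does in verifying its condition $(\diamondsuit)$, is only a cosmetic difference.
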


\begin{proof}
 It is clear that $\mathscr{W}_{2}$ is a subset of $\mathscr{W}_{3}$.
 Therefore, we prove inductively that $\mathscr{W}_{3}$ is a subset of $\mathscr{W}_{2}$.
 To do it, we consider the following condition for a non-trivial $\RotE$-coloring $\mathscr{C}$ of a diagram $D$ of the $(p, q)$-torus knot:
 \begin{itemize}
  \item [$(\diamondsuit)$] Each first component of colors of the arcs of $D$ by $\mathscr{C}$ is an element of $\mathscr{W}_{2}$.
 \end{itemize}
 We first note that, by the definition of $\mathscr{W}_{2}$, $\mathscr{C}_{0}$ satisfies the condition $(\diamondsuit)$.
 Let $\mathscr{C}$ be a non-trivial $\RotE$-coloring of a diagram $D$ obtained from $\mathscr{C}_{0}$ by a finite sequence of Reidemeister moves and planar isotopies, and assume that $\mathscr{C}$ satisfies the condition $(\diamondsuit)$.
 Suppose that $\mathscr{C}^{\prime}$ is a non-trivial $\RotE$-coloring of a diagram $D^{\prime}$ obtained from $\mathscr{C}$ by a Reidemeister move or a planar isotopy. 
 If $\mathscr{C}^{\prime}$ is obtained from $\mathscr{C}$ by a planar isotopy, an RI or an RII which decreases the number of crossings, since each color of the arcs of $D^{\prime}$ by $\mathscr{C}^{\prime}$ is already used as a color of some arc of $D$ by $\mathscr{C}$, $\mathscr{C}^{\prime}$ also satisfies the condition $(\diamondsuit)$.
 Otherwise, although we may have a color of some arc of $D^{\prime}$ by $\mathscr{C}^{\prime}$ which is different from any colors of the arcs of $D$ by $\mathscr{C}$, that color can be written as $x \, {\ast}^{\pm 1} \, y$ with some colors $x$, $y$ of arcs of $D$ by $\mathscr{C}$.
 Let $\widehat{x}$, $\widehat{y}$ and $\widehat{x \, {\ast}^{\pm 1} \, y}$ denote the first components of $x$, $y$  and $x \, {\ast}^{\pm 1} \, y$ respectively.
 Since $\mathscr{W}_{2} = \mathscr{W}_{1}$ by Proposition \ref{W_1}, $\widehat{x}$ can be written with some integers $a_{\sigma}$ as 
 \[
 \widehat{x} = \widehat{y} + \sum_{0 \leq \sigma \leq 2 \alpha - 1} a_{\sigma} v_{\sigma}.
 \]
 Since $\widehat{x \, {\ast}^{\pm 1} \, y}$ is obtained from $\widehat{x}$ by the $\pm \theta$-rotation about $\widehat{y}$, we have
 \[
 \widehat{x \, {\ast}^{\pm 1} \, y} = \widehat{y} + \sum_{0 \leq \sigma \leq 2 \alpha -1} a_{\sigma} v_{[\sigma \pm \beta]_{2 \alpha}}.
 \]
 Here, $\beta$ is the one defined in the proof of Lemma \ref{fundamental_deformation_3}.
 It means that $\widehat{x \, {\ast}^{\pm 1} \, y}$ is an element of $\mathscr{W}_{1} = \mathscr{W}_{2}$.
 Therefore, $\mathscr{C}^{\prime}$ also satisfies the condition $(\diamondsuit)$.
\end{proof}

By Propositions \ref{W_1} and \ref{W_2}, we obviously have the following claim.

\begin{corollary}
 \label{W_2_and_W}
 For each $w, w^{\prime} \in \mathscr{W}_{3}$, $w^{\prime}$ can be written with some integers $a_{\sigma}$ as
 \[
 w^{\prime} = w + \sum_{0 \leq \sigma \leq 2 \alpha - 1} a_{\sigma} v_{\sigma}.
 \]
\end{corollary}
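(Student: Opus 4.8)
The plan is to reduce the statement to the explicit description of $\mathscr{W}_{1}$ that was recorded along the way. First I would chain the two preceding propositions: Proposition \ref{W_2} gives $\mathscr{W}_{3} = \mathscr{W}_{2}$ and Proposition \ref{W_1} gives $\mathscr{W}_{2} = \mathscr{W}_{1}$, so that $\mathscr{W}_{3} = \mathscr{W}_{1}$. By the inductive construction carried out before Proposition \ref{W_1}, this common set coincides with $\bigl\{ w_{00} + \sum_{0 \le \sigma \le 2\alpha - 1} a_{\sigma} v_{\sigma} \bigm| a_{\sigma} \in \mathbb{Z} \bigr\}$. Hence every point of $\mathscr{W}_{3}$ is $w_{00}$ plus an integer combination of the $v_{\sigma}$, which is the only structural fact I need.

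Next, given $w, w' \in \mathscr{W}_{3}$, I would fix such expressions $w = w_{00} + \sum_{0 \le \sigma \le 2\alpha - 1} b_{\sigma} v_{\sigma}$ and $w' = w_{00} + \sum_{0 \le \sigma \le 2\alpha - 1} c_{\sigma} v_{\sigma}$ with all $b_{\sigma}, c_{\sigma} \in \mathbb{Z}$. Subtracting cancels the common base point $w_{00}$ and yields $w' - w = \sum_{0 \le \sigma \le 2\alpha - 1} (c_{\sigma} - b_{\sigma}) v_{\sigma}$. Setting $a_{\sigma} = c_{\sigma} - b_{\sigma}$, which is again an integer since $\mathbb{Z}$ is closed under subtraction, gives $w' = w + \sum_{0 \le \sigma \le 2\alpha - 1} a_{\sigma} v_{\sigma}$, exactly as claimed.

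There is essentially no obstacle at this stage: all the substantive work lives in Propositions \ref{W_1} and \ref{W_2}, and the corollary merely transports their conclusion into the relative form needed in the sequel. The one point worth flagging is that the representation of a point of $\mathscr{W}_{1}$ as an integer combination of the $v_{\sigma}$ is \emph{not} unique (the $2\alpha$ unit vectors satisfy $v_{\sigma} + v_{[\sigma + \alpha]_{2\alpha}} = 0$, hence are $\mathbb{Z}$-linearly dependent in $\mathbb{C} \cong \mathbb{R}^{2}$); but uniqueness is irrelevant here, since I only need the \emph{existence} of one integer expression for each of $w$ and $w'$, and the difference of any two such expressions already furnishes a valid integer tuple $(a_{\sigma})$.
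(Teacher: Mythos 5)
Your proposal is correct and follows exactly the paper's route: the paper also deduces the corollary directly from Propositions \ref{W_1} and \ref{W_2} (stating it is ``obvious''), namely $\mathscr{W}_{3} = \mathscr{W}_{2} = \mathscr{W}_{1} = \bigl\{ w_{00} + \sum_{0 \leq \sigma \leq 2\alpha - 1} a_{\sigma} v_{\sigma} \bigm| a_{\sigma} \in \mathbb{Z} \bigr\}$, after which the claim follows by subtracting two such representations. Your remark that uniqueness of the representation is irrelevant (and indeed fails, since $v_{[\sigma+\alpha]_{2\alpha}} = -v_{\sigma}$) is a correct and harmless addition.
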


Furthermore, by Corollary \ref{W_2_and_W} and Proposition \ref{prop:main} in Appendix A, we straightforwardly have the following corollary.

\begin{corollary}
 \label{point_number}
 For each $w \in \mathscr{W}_{3}$, there are exactly $2 \alpha$ points of $\mathscr{W}_{3}$ on the unit circle centered at $w$, which are $w + v_{\sigma}$ \textup{(}$0 \leq \sigma \leq 2 \alpha - 1$\textup{)}.
\end{corollary}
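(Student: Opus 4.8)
The plan is to prove the asserted description of $\mathscr{W}_{3} \cap \{z : |z - w| = 1\}$ by two opposite inclusions, since the genuine arithmetic content is already isolated in Proposition \ref{prop:main} and the corollary is then a matter of matching hypotheses. First I would record that each of the $2\alpha$ candidate points $w + v_{\sigma}$ ($0 \leq \sigma \leq 2\alpha - 1$) does lie both on the unit circle centered at $w$ and in $\mathscr{W}_{3}$. The former is immediate because $|v_{\sigma}| = 1$. For the latter, recall that $\mathscr{W}_{3} = \mathscr{W}_{2} = \mathscr{W}_{1}$ by Propositions \ref{W_1} and \ref{W_2}, and that $\mathscr{W}_{1} = \bigl\{ w_{00} + \sum_{\sigma} a_{\sigma} v_{\sigma} \mid a_{\sigma} \in \mathbb{Z} \bigr\}$ by construction; writing $w = w_{00} + \sum_{\sigma} a_{\sigma} v_{\sigma}$, the point $w + v_{\tau} = w_{00} + \sum_{\sigma} a_{\sigma}' v_{\sigma}$ (with $a_{\tau}' = a_{\tau} + 1$ and $a_{\sigma}' = a_{\sigma}$ otherwise) is again an integer combination of the $v_{\sigma}$ added to $w_{00}$, hence an element of $\mathscr{W}_{1} = \mathscr{W}_{3}$. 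Since the $2\alpha$ unit vectors $v_{\sigma}$ are pairwise distinct, these furnish $2\alpha$ distinct points.

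Next I would establish that there are no others. Let $w' \in \mathscr{W}_{3}$ satisfy $|w' - w| = 1$. By Corollary \ref{W_2_and_W} I may write $w' - w = \sum_{\sigma} a_{\sigma} v_{\sigma}$ with all $a_{\sigma} \in \mathbb{Z}$, so that $\bigl| \sum_{\sigma} a_{\sigma} v_{\sigma} \bigr| = 1$. Now the vectors $v_{\sigma} = \cos\!\bigl(\arg(w_{01} - w_{00}) + \tfrac{\sigma \pi}{\alpha}\bigr) + \sqrt{-1}\,\sin\!\bigl(\arg(w_{01} - w_{00}) + \tfrac{\sigma \pi}{\alpha}\bigr)$ are exactly the $2\alpha$ equally spaced unit vectors, that is, the $2\alpha$-th roots of unity rotated by the fixed phase $\arg(w_{01} - w_{00})$, which is precisely the configuration governed by Proposition \ref{prop:main}. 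That proposition forces $\sum_{\sigma} a_{\sigma} v_{\sigma} = v_{\tau}$ for some $\tau$, whence $w' = w + v_{\tau}$. Combining the two inclusions yields the corollary.

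The hard part is not this assembly but Proposition \ref{prop:main} itself, namely the statement that among all integer combinations of these $2\alpha$-th roots of unity (equivalently, the lattice points of a rotated copy of $\mathbb{Z}[e^{\pi\sqrt{-1}/\alpha}]$) only the roots of unity have modulus one; that is where I expect the real effort to sit, and here I am entitled to invoke it. Even granting it, the one subtlety I would be careful about is that the representation $w' - w = \sum_{\sigma} a_{\sigma} v_{\sigma}$ is not unique as a coefficient vector, because $v_{\sigma + \alpha} = -v_{\sigma}$ gives the relations $v_{\sigma} + v_{\sigma + \alpha} = 0$. Consequently Proposition \ref{prop:main} must be read as a statement about the \emph{value} of the complex number $\sum_{\sigma} a_{\sigma} v_{\sigma}$, asserting that it equals a single $v_{\tau}$, rather than as a statement constraining the coefficients $a_{\sigma}$; uniqueness of the index $\tau$ then follows at once from the distinctness of the $v_{\sigma}$ as points of $\mathbb{C}$.
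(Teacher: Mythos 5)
Your proposal is correct and takes essentially the same route as the paper, which derives the corollary ``straightforwardly'' from Corollary \ref{W_2_and_W} and Proposition \ref{prop:main}: your two inclusions (membership of the points $w + v_{\sigma}$ via $\mathscr{W}_{3} = \mathscr{W}_{2} = \mathscr{W}_{1}$ from Propositions \ref{W_1} and \ref{W_2}, and exclusion of all other points by factoring out the fixed phase $\arg(w_{01}-w_{00})$ and applying Proposition \ref{prop:main} with $n = 2\alpha$ even) are precisely the intended assembly. Your closing remark, that Proposition \ref{prop:main} constrains the value of $\sum_{\sigma} a_{\sigma} v_{\sigma}$ rather than the (non-unique) coefficient vector, is a correct and careful reading.
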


\begin{proof}[Proof of Theorem \ref{main_thm}]
 Suppose that $\mathscr{C} = \mathscr{C}(Q, P_{0})$ is a non-trivial $\RotE$-coloring of $D(p, q)$ being R-equivalent to $\mathscr{C}_{0}$.
 Then, by definition, the vertices of $Q$ and each $P_{i}$ are elements of $W_{3}$.
 Since the side length of $Q$ is $1$ (as mentioned in the first paragraph of this section), in light of Corollary \ref{point_number}, each edge of $Q$ should connect some $w \in \mathscr{W}_{3}$ and $w + v$ ($v \in \mathscr{V}$).
 
 Conversely, for some $w \in \mathscr{W}_{3}$, let $w_{\sigma} = w + {v}_{\sigma}$ (${v}_{\sigma} \in \mathscr{V}$).
 Then, for each $\sigma$, there are at most two regular polygons of type $(|q|,l)$ having the line segment $ww_{\sigma}$ as a side. 
 One is $Q^{+}$ having $w w_{[\sigma + \varepsilon]_{2\alpha}}$ as another side, and the other $Q^{-}$ having  $w w_{[\sigma - \varepsilon]_{2\alpha}}$ as another side, where $\varepsilon$ is $(q^{\prime} - 1) \cdot \frac{2 l^{\prime} \alpha}{q^{\prime}} + \alpha$.
 We note that $Q^{+}$ coincides with $Q^{-}$ if and only if $q^{\prime} = 2$.
 Let $\mathscr{C}^{+}$ and $\mathscr{C}^{-}$ be non-trivial $\RotE$-colorings of $D(p,q)$ derived from $(|p|,k;|q|,l)$-trochoids around $Q^{+}$ and $Q^{-}$ respectively.
 Then the argument that $\mathscr{W}_{1}$ is a subset of $\mathscr{W}_{2}$ in the proof of Proposition \ref{W_1} lets us know that at least $\mathscr{C}^{+}$ or $\mathscr{C}^{-}$ is obtained from $\mathscr{C}_{0}$ by shifts and even times of switches.
 Assume that we obtain $\mathscr{C}^{+}$ from $\mathscr{C}_{0}$ by shifts and even times of switches.
 By Remark \ref{fundamental_deformation_2}, applying adequate shifts to $\mathscr{C}^{+}$, we obtain a non-trivial $\RotE$-coloring $\mathscr{C}^{+}_{\ast}$ of $D(p,q)$ whose center point is $w$.
 Choose ${v}_{[\sigma + \varepsilon]_{2 \alpha}}$ as $v_{\tau}$ in the proof of Lemma \ref{fundamental_deformation_3}.
 Since $p^{\prime}q^{\prime}$ is even, we know that $\mathscr{V}_{\sigma} = \mathscr{V}_{\tau} = \mathscr{V}$.
 Thus, we can obtain a non-trivial $\RotE$-coloring $\mathscr{C}^{-}_{\ast}$ of $D(p,q)$, which is derived from a trochoid around $Q^{-}$, from $\mathscr{C}^{+}_{\ast}$ by serial fundamental deformations.
 Applying adequate shifts to $\mathscr{C}^{-}_{\ast}$, we can obtain $\mathscr{C}^{-}$.
 Therefore, we can obtain $\mathscr{C}^{-}$ from $\mathscr{C}^{+}$ by shifts and even times of switches.
 In a similar way, we can obtain $\mathscr{C}^{+}$ from $\mathscr{C}^{-}$ by shifts and even times of switches, even if we obtain $\mathscr{C}^{-}$ from $\mathscr{C}_{0}$.
 Therefore, both $\mathscr{C}^{+}$ and $\mathscr{C}^{-}$ are obtained from $\mathscr{C}_{0}$ by shifts and even times of switches.
 It means that both $\mathscr{C}^{+}$ and $\mathscr{C}^{-}$ are R-equivalent to $\mathscr{C}_{0}$.
 
 In conclusion, all colorings of $D(p, q)$ being R-equivalent to $\mathscr{C}_{0}$ are obtained from $\mathscr{C}_{0}$ by shifts and even times of switches.
\end{proof}

\begin{remark}
 \label{odd_case}
 If $p^{\prime}q^{\prime}$ is odd, we never obtain $\mathscr{C}^{-}_{\ast}$ from $\mathscr{C}^{+}_{\ast}$ by serial fundamental deformations, because $\mathscr{V}_{\sigma} \cap \mathscr{V}_{\tau} = \emptyset$.
 However, we might obtain $\mathscr{C}^{-}_{\ast}$ from $\mathscr{C}^{+}_{\ast}$ by another finite sequence of Reidemeister moves and planar isotopies.
 Therefore, we could not determine the R-equivalence class of $\mathscr{C}_{0}$ completely so far in this case.
\end{remark}

\section*{Acknowledgments}
\label{sec:acknowledgments}

The author would like to express her sincere gratitude to her supervisor Professor Ayumu Inoue for his hearty encouragements and helpful suggestions.
She also would like to express her thanks to Professor Takashi Hara who kindly lets her know valuable information and provides the appendix to this paper.

\appendix
\section{On cyclotomic integers of absolute value $1$ \\ by Takashi Hara\footnote{t-hara@tsuda.ac.jp} (Department of Mathematics, Tsuda University)}
\label{sec.appendix}

For a natural number $n$, let $\zeta_n$ denote a particular $n$-th root of unity $\exp(2\pi \sqrt{-1}/n)\in \mathbb{C}$. In this appendix, we verify the following claim yielding Corollary  \ref{point_number}. 

\begin{proposition} \label{prop:main}
Let $\alpha$ be a complex number of the form
\begin{align} \label{eq:cyclo_int}
 \alpha=\sum_{k=0}^{n-1} c_k\zeta_n^k =c_0+c_1\zeta_n+c_2\zeta_n^2+\cdots+c_{n-1}\zeta_n^{n-1} \qquad \text{for } c_0,c_1,\dotsc, c_{n-1}\in \mathbb{Z}.
\end{align} 
Then $\alpha$ is a root of unity if the absolute value $\lvert \alpha \rvert$ of $\alpha$ equals $1$.
More precisely $\alpha$ is an $n$-th root of unity if $n$ is even and a $2n$-th root of unity if $n$ is odd (see Lemma~\ref{lem:unit_torsion}).
\end{proposition}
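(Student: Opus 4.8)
The plan is to reduce the statement to Kronecker's theorem, which asserts that an algebraic integer all of whose Galois conjugates have absolute value $1$ is necessarily a root of unity. Since $\alpha$ lies in the ring $\mathbb{Z}[\zeta_n]$ of integers of the cyclotomic field $\mathbb{Q}(\zeta_n)$, it is automatically an algebraic integer; the only gap is that the hypothesis $\lvert \alpha \rvert = 1$ constrains $\alpha$ itself but not, a priori, its conjugates. The heart of the argument is therefore to promote the single equality $\lvert \alpha \rvert = 1$ to the same equality for \emph{every} Galois conjugate of $\alpha$.

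First I would exploit the fact that $\mathbb{Q}(\zeta_n)/\mathbb{Q}$ is an abelian extension, with Galois group $G \cong (\mathbb{Z}/n\mathbb{Z})^{\times}$, and that complex conjugation, which restricts on $\mathbb{Q}(\zeta_n)$ to the automorphism $c \colon \zeta_n \mapsto \zeta_n^{-1}$, is itself an element of $G$. Because $G$ is abelian, $c$ commutes with every $\sigma \in G$. Writing $\bar\alpha = c(\alpha)$, the hypothesis reads $\alpha\, c(\alpha) = \lvert \alpha \rvert^2 = 1$, and applying an arbitrary $\sigma \in G$ gives
\[
 \lvert \sigma(\alpha) \rvert^2 = \sigma(\alpha)\, \overline{\sigma(\alpha)} = \sigma(\alpha)\, c\bigl( \sigma(\alpha) \bigr) = \sigma(\alpha)\, \sigma\bigl( c(\alpha) \bigr) = \sigma\bigl( \alpha\, c(\alpha) \bigr) = \sigma(1) = 1.
\]
Thus every conjugate $\sigma(\alpha)$ also lies on the unit circle, which is exactly the hypothesis needed for Kronecker's theorem.

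For self-containedness I would include a short proof of Kronecker's theorem. Let $d$ be the degree of $\alpha$ over $\mathbb{Q}$. For each $m \geq 1$ the conjugates of $\alpha^m$ are among the $m$-th powers of the conjugates of $\alpha$, hence all have absolute value $1$; consequently $\alpha^m$ is a root of a monic integer polynomial of degree at most $d$ whose $j$-th coefficient, being up to sign an elementary symmetric function of at most $d$ numbers on the unit circle, is bounded in absolute value by $\binom{d}{j}$ independently of $m$. Only finitely many such polynomials exist, so there are only finitely many distinct values among $\alpha, \alpha^2, \alpha^3, \dotsc$; hence $\alpha^i = \alpha^j$ for some $i < j$, and since $\lvert \alpha \rvert = 1$ forces $\alpha \neq 0$, we obtain $\alpha^{j-i} = 1$.

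Finally, knowing that $\alpha$ is a root of unity lying in $\mathbb{Q}(\zeta_n)$, the refined statement follows from the determination of the torsion subgroup of $\mathbb{Q}(\zeta_n)^{\times}$, which consists of $\mu_n$ when $n$ is even and of $\mu_{2n}$ when $n$ is odd; the latter holds because $-\zeta_n$ is a primitive $2n$-th root of unity, so $\mathbb{Q}(\zeta_n) = \mathbb{Q}(\zeta_{2n})$. I would isolate this purely field-theoretic fact as Lemma~\ref{lem:unit_torsion}. The main obstacle is conceptual rather than computational: it lies in recognizing that the CM structure of the cyclotomic field, namely the centrality of complex conjugation in the abelian group $G$, is precisely what upgrades the single-point condition $\lvert \alpha \rvert = 1$ to a condition on all conjugates. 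Once this observation is in place, the remaining steps are entirely standard.
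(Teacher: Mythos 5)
Your proof is correct, but it takes a genuinely different route from the one in the paper's appendix. Your key move is to use the CM structure of the cyclotomic field: since complex conjugation $\sigma_{-1}$ lies in the abelian group $\mathrm{Gal}(\mathbb{Q}(\zeta_n)/\mathbb{Q})$ and hence commutes with every $\sigma$, the single equality $\alpha\overline{\alpha}=1$ propagates to $\lvert\sigma(\alpha)\rvert=1$ for all conjugates, after which Kronecker's theorem (which you prove by the classical pigeonhole argument on monic integer polynomials with coefficients bounded by binomial coefficients) finishes the job. The paper argues differently: it notes that $\lvert\alpha\rvert^2=\mathrm{Nr}_{\mathbb{Q}(\zeta_n)/\mathbb{Q}(\zeta_n)^+}(\alpha)=1$ makes $\alpha$ a unit of $\mathbb{Z}[\zeta_n]$, and reduces everything to Proposition~\ref{prop:main2}, the statement that the kernel of the norm map $\mathcal{O}_{\mathbb{Q}(\zeta_n)}^\times\to\mathcal{O}_{\mathbb{Q}(\zeta_n)^+}^\times$ is exactly $W_{\mathbb{Q}(\zeta_n)}$. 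That is proved with Dirichlet's unit theorem (both unit groups modulo torsion are free of the same rank $\rho_n=\varphi(n)/2-1$, by the computation of $r_{K,1}$ and $r_{K,2}$ in Lemma~\ref{lem:r1r2}) together with the determinant trick: the norm composed with the inclusion is squaring, so $A_{\overline{\mathrm{Nr}}}A_{\overline{i}}=2I_{\rho_n}$, whence $\det A_{\overline{\mathrm{Nr}}}\neq 0$ and the induced map on free quotients is injective. Your approach is more elementary and self-contained, requiring only the Galois theory of cyclotomic fields and no structure theory of unit groups; the paper's approach is heavier machinery but proves strictly more, namely that the group of relative units of $\mathbb{Q}(\zeta_n)/\mathbb{Q}(\zeta_n)^+$ is torsion, which is what connects the appendix to the Odai--Suzuki results cited in its closing remark. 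Both arguments ultimately rely on the same Lemma~\ref{lem:unit_torsion} to pin down the refined form of the conclusion; note that your parenthetical justification of that lemma (that $-\zeta_n$ is a primitive $2n$-th root of unity when $n$ is odd) only establishes the containment $\mu_{2n}(\mathbb{C})\subseteq\mathbb{Q}(\zeta_n)$, not that no further roots of unity occur, but since you, like the paper, isolate this as a standard cited fact, this is not a gap in your argument.
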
 

A complex number $\alpha$ of the form \eqref{eq:cyclo_int} is called a {\em cyclotomic integer}, which is a typical object dealt with in algebraic number theory. Indeed, one may deduce Proposition \ref{prop:main} by standard algebraic arguments. In the following, we shall give the proof of Proposition~\ref{prop:main} in some detail, for the sake of those who are not familiar with arithmetic theory. We refer the readers to \cite[Part II]{Cox} for basic notion in field theory.

\subsection{Materials from algebraic number theory}

In the following, we regard every algebraic extension of $\mathbb{Q}$ as a subfield of the complex number field $\mathbb{C}$. Finite extension fields of $\mathbb{Q}$ are called ({\em algebraic}) {\em number fields}. An algebraic number $\alpha \in \mathbb{C}$ is said to be {\em integral} (over $\mathbb{Q}$) if it is a root of a certain monic polynomial $f(x)$ with coefficients in $\mathbb{Z}$. For a number field $K$, we define $\mathcal{O}_K$ as a set of all the integral numbers contained in $K$, which is called the {\em ring of integers} of $K$. The group of multiplicatively invertible elements $\mathcal{O}_K^{\times}$ of $K$ is called the {\em unit group} of $K$.  

The structure of the unit group of a number field is determined by Dirichlet, which is one of the fundamental results of algebraic number theory. To state Dirichlet's result precisely, let us prepare several notation. A field embedding $K\hookrightarrow \mathbb{C}$ of a number field $K$ into $\mathbb{C}$ is called a {\em real embedding} if its image is contained in the real number field $\mathbb{R}$; otherwise it is called a {\em complex embedding}. For a complex embedding $\iota \colon K\hookrightarrow \mathbb{C}$, one could define its {\em conjugate} $\overline{\iota}\colon K\hookrightarrow \mathbb{C}$ by setting $\overline{\iota}(\alpha):=\overline{\iota(\alpha)}$ for every $\alpha\in K$; here $\overline{\iota(\alpha)}$ denotes the complex conjugate of $\iota(\alpha)\in \mathbb{C}$. We call $\iota$ and $\overline{\iota}$ a {\em conjugate pair} of complex embeddings. The number of real embeddings of $K$ and that of conjugate pairs of complex embeddings are respectively denoted as $r_{K,1}$ and $r_{K,2}$. Then, since the total number of embeddings of $K$ into $\mathbb{C}$ is known to equal the extension degree $[K:\mathbb{Q}]$ of $K$ over $\mathbb{Q}$ (see \cite[Corollary of Theorem 50]{Marcus} for details), we obtain a basic identity
\begin{align} \label{eq:emb_degree}
 r_{K,1}+2r_{K,2}=[K:\mathbb{Q}].
\end{align}

\begin{theorem}[Dirichlet's unit theorem] \label{thm:Dirichlet}
Let $K$ be a number field and $\mathcal{O}_K^\times$ the unit group of $K$. Furthermore, let $r_{K,1}$ and $r_{K,2}$ be as above. Then $\mathcal{O}_K^\times$ is a finitely generated abelian group of rank $\rho_K:=r_{K,1}+r_{K,2}-1$. More precisely, let $W_K$ denote the multiplicative group consisting of all the roots of unity in $K$. Then there exist $\rho_K$ units $\varepsilon_1,\varepsilon_2,\dotsc,\varepsilon_{\rho_K}\in \mathcal{O}_K^\times$ such that any unit $u\in \mathcal{O}_K^\times$ is described as 
\begin{align*}
 u=\zeta \varepsilon_1^{e_1}\varepsilon_2^{e_2}\cdots \varepsilon_{\rho_K}^{e_{\rho_K}} \qquad \text{for } \zeta\in W_K,\; e_1,e_2,\dotsc,e_{\rho_K}\in \mathbb{Z}
\end{align*}
in a unique manner.
\end{theorem}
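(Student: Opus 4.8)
The statement is the classical Dirichlet unit theorem, so in the paper itself one would normally just cite it; nonetheless, the plan I would follow to prove it is the standard geometry-of-numbers argument, realizing $\mathcal{O}_K^\times$ as a discrete subgroup of a Euclidean hyperplane via a logarithmic embedding and then using Minkowski's lattice-point theorem to determine its rank. Fix the real embeddings $\iota_1,\dotsc,\iota_{r_{K,1}}$ together with one embedding from each conjugate pair $\iota_{r_{K,1}+1},\dotsc,\iota_{r_{K,1}+r_{K,2}}$, so that by \eqref{eq:emb_degree} there are $r_{K,1}+r_{K,2}$ of them in total. First I would introduce the group homomorphism
\[
 L\colon \mathcal{O}_K^\times \longrightarrow \mathbb{R}^{r_{K,1}+r_{K,2}}, \qquad u\longmapsto \bigl( \log |\iota_1(u)|,\dotsc,\log|\iota_{r_{K,1}}(u)|,\, 2\log|\iota_{r_{K,1}+1}(u)|,\dotsc \bigr),
\]
from the multiplicative unit group to an additive Euclidean space.

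Second, I would locate the image and the kernel of $L$. Because the norm of a unit satisfies $N_{K/\mathbb{Q}}(u)\in\{\pm 1\}$, taking logarithms of absolute values shows that $L(\mathcal{O}_K^\times)$ lies in the hyperplane $H=\{x : \sum_i x_i = 0\}$, whose dimension is $\rho_K = r_{K,1}+r_{K,2}-1$. The kernel of $L$ consists of units all of whose archimedean absolute values equal $1$; such an algebraic integer has all conjugates on the unit circle, hence is a root of unity by Kronecker's theorem, so $\ker L = W_K$ is finite. A boundedness argument then shows that $L(\mathcal{O}_K^\times)$ is discrete: a unit with $L(u)$ in a bounded region has all conjugates bounded, so the integer coefficients of its minimal polynomial are bounded and only finitely many such units can occur. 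Combining discreteness with the finiteness of $\ker L$, I conclude that $L(\mathcal{O}_K^\times)$ is a free abelian group of some rank $r\le\rho_K$ and that $\mathcal{O}_K^\times\cong W_K\times\mathbb{Z}^r$.

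The crux, and the step I expect to be the main obstacle, is the reverse inequality $r\ge\rho_K$, that is, the assertion that $L(\mathcal{O}_K^\times)$ actually spans $H$. For this I would embed $\mathcal{O}_K$ as a full-rank lattice in $\mathbb{R}^{r_{K,1}}\times\mathbb{C}^{r_{K,2}}\cong\mathbb{R}^{[K:\mathbb{Q}]}$ through the canonical Minkowski embedding and invoke Minkowski's convex-body theorem: for any prescribed positive bounds on the archimedean absolute values whose product exceeds a fixed multiple of the covolume of the lattice, there exists a nonzero element of $\mathcal{O}_K$ of bounded norm lying in the corresponding box. Varying the box so as to force the absolute value to be small in one chosen coordinate and large elsewhere yields a sequence of algebraic integers of uniformly bounded norm; since only finitely many ideals have bounded norm, two of these integers generate the same ideal, and their ratio is a unit whose log-vector points in a prescribed direction. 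Producing enough such units shows that $L(\mathcal{O}_K^\times)$ is contained in no proper subspace of $H$, forcing $r=\rho_K$. Once the rank is pinned down, the stated normal form $u=\zeta\varepsilon_1^{e_1}\cdots\varepsilon_{\rho_K}^{e_{\rho_K}}$ and its uniqueness follow at once from the decomposition $\mathcal{O}_K^\times\cong W_K\times\mathbb{Z}^{\rho_K}$ by taking $\varepsilon_1,\dotsc,\varepsilon_{\rho_K}$ to be a $\mathbb{Z}$-basis of a complement of $W_K$.
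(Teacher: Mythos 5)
Your outline is the standard geometry-of-numbers argument (logarithmic embedding, Kronecker's theorem identifying the kernel with $W_K$, discreteness, and Minkowski's convex-body theorem for the hard inequality that the log-image spans the trace-zero hyperplane), and this is exactly the proof the paper itself defers to: its ``proof'' consists of citing Dirichlet's original argument and the modern proof via Minkowski's Geometry of Numbers in Marcus. So your proposal is correct and takes essentially the same route as the paper's intended (cited) proof.
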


The $\rho_K$ units $\varepsilon_1,\varepsilon_2,\dotsc,\varepsilon_{\rho_K}$ are often called the {\em fundamental units} of $K$ (though they are not determined uniquely).

\begin{proof}
 Dirichlet's original proof can be seen in \cite[\S 183]{Dirichlet}. For a modern proof using Minkowski's {\em Geometry of Numbers}, see \cite[Theorem 38]{Marcus} for example.
\end{proof}

\subsection{Cyclotomic fields and its maximal real subfields}

For a natural number $n$, we call a field $\mathbb{Q}(\zeta_n)$ obtained by adjoining the primitive $n$-th root of unity $\zeta_n$ to $\mathbb{Q}$ the {\em $n$-th cyclotomic field}, which is known to be a Galois extension of $\mathbb{Q}$ of degree $\varphi(n)$; here $\varphi(n):=\#\bigl(\mathbb{Z}/n\mathbb{Z}\bigr)^\times$ denotes Euler's totient function (see \cite[Corollary 9.1.10]{Cox}). It is well known that the ring of integers $\mathcal{O}_{\mathbb{Q}(\zeta_n)}$ of $\mathbb{Q}(\zeta_n)$ coincides with 
\begin{align*}
 \mathbb{Z}[\zeta_n]:=\{ c_0+c_1\zeta_n+c_2\zeta_n^2+\cdots +c_{n-1}\zeta_n^{n-1} \mid c_0,c_1,\dotsc,c_{n-1}\in \mathbb{Z} \}
\end{align*}
(see \cite[Corollary 2 of Theorem 12]{Marcus}). The Galois group $\mathrm{Gal}(\mathbb{Q}(\zeta_n)/\mathbb{Q})$ is explicitly described via the isomorphism
\begin{align*}
 \bigl(\mathbb{Z}/n\mathbb{Z}\bigr)^\times \xrightarrow{\;\; \cong \;\;} \mathrm{Gal}(\mathbb{Q}(\zeta_n)/\mathbb{Q})\, ; \, a \!\!\!\pmod{n} \mapsto \sigma_a,
\end{align*}
where $\sigma_a$ is a unique element of $\mathrm{Gal}(\mathbb{Q}(\zeta_n)/\mathbb{Q})$ characterised by $\sigma_a(\zeta_n)=\zeta_n^a$ (see \cite[Theorem~9.1.11]{Cox}). Then one readily observes that $\sigma_{-1}\in \mathrm{Gal}(\mathbb{Q}(\zeta_n)/\mathbb{Q})$ is induced by the complex conjugation on $\mathbb{C}$ (recall that we regard $\mathbb{Q}(\zeta_n)$ as a subfield of $\mathbb{C}$), and it generates a subgroup $\langle \sigma_{-1}\rangle=\{\mathrm{id}, \sigma_{-1}\}$ of cardinality $2$ in $\mathrm{Gal}(\mathbb{Q}(\zeta_n)/\mathbb{Q})$. Let $\mathbb{Q}(\zeta_n)^+=\mathbb{Q}(\zeta_n+\zeta_n^{-1})$ denote the subfield of $\mathbb{Q}(\zeta_n)$ corresponding to $\langle \sigma_{-1}\rangle$; it is a Galois extension of $\mathbb{Q}$ of degree $\varphi(n)/2$ by definition. For later use, we compute $r_{K,1}$ and $r_{K,2}$ for $K=\mathbb{Q}(\zeta_n)$ and $\mathbb{Q}(\zeta_n)^+$. 

\begin{lemma} \label{lem:r1r2}
 Let the notation be as above. Then we have $(r_{\mathbb{Q}(\zeta_n),1}, r_{\mathbb{Q}(\zeta_n),2})=(0,\varphi(n)/2)$ and $(r_{\mathbb{Q}(\zeta_n)^+,1}, r_{\mathbb{Q}(\zeta_n)^+,2})=(\varphi(n)/2,0)$.
\end{lemma}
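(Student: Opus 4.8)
The plan is to exploit the fact that both $\mathbb{Q}(\zeta_n)$ and $\mathbb{Q}(\zeta_n)^+$ are Galois over $\mathbb{Q}$, so that all of their embeddings into $\mathbb{C}$ have one and the same image, namely the field itself. This reduces each assertion to the single elementary question of whether the field lies inside $\mathbb{R}$: a Galois number field is \emph{totally real} (every embedding real) exactly when it is contained in $\mathbb{R}$, and \emph{totally complex} ($r_{K,1}=0$) exactly when it is not. First I would record the degrees, both of which are already noted in the text above: $\mathbb{Q}(\zeta_n)/\mathbb{Q}$ is Galois of degree $\varphi(n)$, while $\mathbb{Q}(\zeta_n)^+/\mathbb{Q}$ is Galois of degree $\varphi(n)/2$, the latter because $\mathrm{Gal}(\mathbb{Q}(\zeta_n)/\mathbb{Q})\cong(\mathbb{Z}/n\mathbb{Z})^\times$ is abelian, so the subgroup $\langle\sigma_{-1}\rangle$ is normal and its fixed field is again Galois over $\mathbb{Q}$.

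For $K=\mathbb{Q}(\zeta_n)$ I would observe that, for $n\geq 3$, the generator $\zeta_n=\exp(2\pi\sqrt{-1}/n)$ is not real, so $K\not\subseteq\mathbb{R}$. Since every embedding of the Galois extension $K$ has image $K$, no embedding is real, whence $r_{K,1}=0$. Substituting into the identity \eqref{eq:emb_degree}, that is $r_{K,1}+2r_{K,2}=[K:\mathbb{Q}]=\varphi(n)$, gives $r_{K,2}=\varphi(n)/2$, as claimed.

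For $K=\mathbb{Q}(\zeta_n)^+$ I would use that, as already established, $\sigma_{-1}$ is induced by complex conjugation on $\mathbb{C}$; hence its fixed field $\mathbb{Q}(\zeta_n)^+$ consists precisely of those elements of $\mathbb{Q}(\zeta_n)$ fixed under complex conjugation, i.e.\ of real numbers, so $\mathbb{Q}(\zeta_n)^+\subseteq\mathbb{R}$. Again because the extension is Galois, every one of its embeddings has image inside $\mathbb{R}$, so all $\varphi(n)/2$ embeddings are real. This yields $r_{K,1}=[K:\mathbb{Q}]=\varphi(n)/2$ and $r_{K,2}=0$, consistent with \eqref{eq:emb_degree}.

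The only point requiring genuine care is the reduction \emph{``Galois over $\mathbb{Q}$ forces all embeddings to share a single image''}, which is what collapses each of the two statements into the membership question $K\subseteq\mathbb{R}$; once this is in hand, the hypothesis $n\geq 3$ (ensuring both $\zeta_n\notin\mathbb{R}$ and $\varphi(n)$ even) makes both computations immediate. I would also flag the degenerate cases $n=1,2$, where $\varphi(n)/2\notin\mathbb{Z}$, as lying outside the scope of the statement.
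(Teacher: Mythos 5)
Your proposal is correct and takes essentially the same route as the paper: both exploit that $\mathbb{Q}(\zeta_n)$ and $\mathbb{Q}(\zeta_n)^+$ are Galois over $\mathbb{Q}$ — the paper phrases this as every embedding having the form $\iota_0\circ\sigma_a$ and then checks where the generator $\zeta_n$ (resp.\ $\zeta_n+\zeta_n^{-1}$) is sent, while you phrase it as all embeddings sharing the single image $K$, reducing everything to whether $K\subseteq\mathbb{R}$ — and both then conclude via the identity $r_{K,1}+2r_{K,2}=[K:\mathbb{Q}]$. Your explicit caveat about the degenerate cases $n=1,2$ is a reasonable point that the paper leaves implicit.
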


\begin{proof}
 For a finite Galois extension $K$ over $\mathbb{Q}$, the set of embeddings of $K$ into $\mathbb{C}$ is described as $\{\iota_0\circ \sigma \mid \sigma\in \mathrm{Gal}(K/\mathbb{Q})\}$ where $\iota_0\colon K\hookrightarrow \mathbb{C}$ be the natural inclusion (see \cite[Theorem 52]{Marcus}). Suppose $K=\mathbb{Q}(\zeta_n)$. Then, for each $\sigma_a\in \mathrm{Gal}(\mathbb{Q}(\zeta_n)/\mathbb{Q})$, the embedding $\iota_0\circ \sigma_a$ sends $\zeta_n$ to an imaginary number $\zeta_n^a$; in other words, every embedding of $\mathbb{Q}(\zeta_n)$ into $\mathbb{C}$ is complex. This observation and the basic identity \eqref{eq:emb_degree} imply the first half of the statement. Next suppose $K=\mathbb{Q}(\zeta_n)^+=\mathbb{Q}(\zeta_n+\zeta_n^{-1})$. Then, for each $\sigma_a \! \pmod{\langle \sigma_{-1}\rangle} \in \mathrm{Gal}(\mathbb{Q}(\zeta_n)^+/\mathbb{Q})$, the embedding $\iota_0\circ \sigma_a$ sends $\zeta_n+\zeta_n^{-1}$ to a real number $\zeta_n^a+\zeta_n^{-a}$; in other words, every embedding of $\mathbb{Q}(\zeta_n)^+$ into $\mathbb{C}$ is real. This observation and the basic identity \eqref{eq:emb_degree} imply the latter half of the statement.
\end{proof}

\begin{lemma} \label{lem:unit_torsion}
 For a number field $K$, let $W_K$ denote the multiplicative group of all the roots of unity in $K$. Then we have $W_{\mathbb{Q}(\zeta_n)}=\{\pm \zeta_n^k \mid k=0,1,\ldots,n-1\}$ and $W_{\mathbb{Q}(\zeta_n)^+}=\{\pm 1\}$.
\end{lemma}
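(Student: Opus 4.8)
The plan is to handle the two assertions separately, starting with the easier one for the totally real field $\mathbb{Q}(\zeta_n)^+$. By Lemma~\ref{lem:r1r2} every embedding of $\mathbb{Q}(\zeta_n)^+$ into $\mathbb{C}$ is real, so under the fixed inclusion $\mathbb{Q}(\zeta_n)^+\subseteq\mathbb{C}$ we have $\mathbb{Q}(\zeta_n)^+\subseteq\mathbb{R}$. Since the only roots of unity lying in $\mathbb{R}$ are $\pm 1$, and both plainly belong to $\mathbb{Q}(\zeta_n)^+$, this immediately gives $W_{\mathbb{Q}(\zeta_n)^+}=\{\pm 1\}$.

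For $K=\mathbb{Q}(\zeta_n)$ I would first record two general facts about $W_K$. On the one hand, $W_K$ is precisely the torsion subgroup of $\mathcal{O}_K^{\times}$: each root of unity is integral (a root of $x^m-1$) and invertible in $\mathcal{O}_K$, while conversely every torsion unit is a root of unity. By Dirichlet's unit theorem (Theorem~\ref{thm:Dirichlet}) the group $\mathcal{O}_K^{\times}$ is finitely generated, so its torsion subgroup $W_K$ is finite; being a finite subgroup of the multiplicative group of a field, $W_K$ is cyclic. Writing $w:=\#W_K$ we then have $W_K=\langle\zeta_w\rangle$ and hence $\mathbb{Q}(\zeta_w)\subseteq\mathbb{Q}(\zeta_n)$.

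Next I would pin down the candidate answer. Set $N:=n$ if $n$ is even and $N:=2n$ if $n$ is odd; in either case $N$ is even, $\varphi(N)=\varphi(n)$, and $\mathbb{Q}(\zeta_N)=\mathbb{Q}(\zeta_n)$. A direct check shows $\{\pm\zeta_n^k\mid 0\le k\le n-1\}=\langle\zeta_N\rangle$: for $n$ even one has $-1=\zeta_n^{n/2}$, whereas for $n$ odd the element $-\zeta_n$ equals $\exp(2\pi\sqrt{-1}\,(n+2)/(2n))$ with $\gcd(n+2,2n)=1$, so it is a primitive $2n$-th root of unity. In particular $\langle\zeta_N\rangle\subseteq W_K$, giving $N\mid w$, and then $\mathbb{Q}(\zeta_N)\subseteq\mathbb{Q}(\zeta_w)\subseteq\mathbb{Q}(\zeta_n)=\mathbb{Q}(\zeta_N)$ forces $\mathbb{Q}(\zeta_w)=\mathbb{Q}(\zeta_N)$, whence $\varphi(w)=\varphi(N)$.

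The crux is then the elementary fact that if $a\mid b$ and $\varphi(a)=\varphi(b)$, then either $b=a$, or $a$ is odd and $b=2a$. I would prove it by comparing the quotient $\varphi(b)/\varphi(a)=\prod_p \varphi(p^{e_p})/\varphi(p^{f_p})$ prime by prime: any prime at which the exponents differ contributes a factor strictly greater than $1$, unless that prime is $2$ with $f_2=0$ and $e_2=1$. Applying this with $(a,b)=(N,w)$ and using that $N$ is even rules out the exceptional case, so $w=N$ and therefore $W_K=\langle\zeta_N\rangle=\{\pm\zeta_n^k\}$. The only genuine obstacle is this $\varphi$-comparison step; everything else is formal once the finiteness of $W_K$ has been extracted from Theorem~\ref{thm:Dirichlet}.
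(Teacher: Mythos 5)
Your proof is correct, but it takes a different route from the paper on the main point. For $W_{\mathbb{Q}(\zeta_n)^+}=\{\pm 1\}$ you argue exactly as the paper does ($\mathbb{Q}(\zeta_n)^+$ lies in $\mathbb{R}$ and the only real roots of unity are $\pm 1$; the paper sees this directly from $\mathbb{Q}(\zeta_n)^+=\mathbb{Q}(\zeta_n+\zeta_n^{-1})$, you route it through Lemma~\ref{lem:r1r2}). The difference is the first half: the paper disposes of $W_{\mathbb{Q}(\zeta_n)}=\{\pm\zeta_n^k\}$ entirely by citing \cite[Corollary 3 of Theorem 3]{Marcus}, whereas you prove it from scratch: $W_K$ is the torsion subgroup of $\mathcal{O}_K^\times$, hence finite by Theorem~\ref{thm:Dirichlet} and cyclic of some order $w$ as a finite subgroup of $\mathbb{C}^\times$; the candidate set $\{\pm\zeta_n^k\}$ equals $\langle\zeta_N\rangle$ with $N=n$ for $n$ even and $N=2n$ for $n$ odd (your check that $-\zeta_n$ is a primitive $2n$-th root of unity via $\gcd(n+2,2n)=1$ is right), so $N\mid w$ and $\mathbb{Q}(\zeta_w)=\mathbb{Q}(\zeta_N)$ forces $\varphi(w)=\varphi(N)$; your prime-by-prime lemma that $a\mid b$ and $\varphi(a)=\varphi(b)$ imply $b=a$ or ($a$ odd, $b=2a$) is sound, and evenness of $N$ kills the exceptional case, giving $w=N$. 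All steps check out. What your route buys is self-containedness, very much in the spirit of the appendix's stated aim of spelling out details for readers unfamiliar with arithmetic theory (indeed, your argument is essentially the standard one underlying the result cited from Marcus); what the paper's route buys is brevity. A minor remark: invoking Dirichlet's unit theorem just for finiteness of $W_K$ is heavier than necessary, since a root of unity of order $m$ in $K$ forces $\varphi(m)\leq [K:\mathbb{Q}]$ and $\varphi(m)\to\infty$, which already bounds the possible orders.
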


One could immediately deduce from Lemma~\ref{lem:unit_torsion} that 
\begin{align*}
 W_{\mathbb{Q}(\zeta_n)}=
\begin{cases}
 \mu_n(\mathbb{C}) & \text{if $n$ is even}, \\
 \mu_{2n}(\mathbb{C}) & \text{if $n$ is odd} 
\end{cases}
\end{align*}
holds where $\mu_m(\mathbb{C})$ denotes the multiplicative group of all the $m$-th roots of unity.

\begin{proof}[Proof of Lemma~$\ref{lem:unit_torsion}$]
 See \cite[Corollary 3 of Theorem 3]{Marcus} for the first half of the statement. The latter half is obvious since $\mathbb{Q}(\zeta_n)^+=\mathbb{Q}(\zeta_n+\zeta_n^{-1})$ is a subfield of $\mathbb{R}$.
\end{proof}

\subsection{Restatement via the norm map} 

The {\em norm map} $\mathrm{Nr}_{\mathbb{Q}(\zeta_n)/\mathbb{Q}(\zeta_n)^+}$ from $\mathbb{Q}(\zeta_n)$ to $\mathbb{Q}(\zeta_n)^+$ (see \cite[p.\ 15]{Marcus}) is described explicitly as 
\begin{align*} 
 \mathrm{Nr}_{\mathbb{Q}(\zeta_n)/\mathbb{Q}(\zeta_n)^+}(\alpha)=\alpha\cdot \sigma_{-1}(\alpha)=\alpha\overline{\alpha}=\lvert \alpha \rvert^2 \qquad \text{for }\alpha\in \mathbb{Q}(\zeta_n)^\times.
\end{align*}
Here $\lvert \, \cdot \, \rvert$ denotes the usual absolute value defined on $\mathbb{C}$. The norm map $\mathrm{Nr}_{\mathbb{Q}(\zeta_n)/\mathbb{Q}(\zeta_n)^+}$ induces a homomorphism of finitely generated abelian groups
\begin{align} \label{eq:norm}
 \mathrm{Nr}_{\mathbb{Q}(\zeta_n)/\mathbb{Q}(\zeta_n)^+} \colon \mathcal{O}_{\mathbb{Q}(\zeta_n)}^\times \longrightarrow \mathcal{O}_{\mathbb{Q}(\zeta_n)^+}^\times \, ; \, \alpha \mapsto \lvert \alpha \rvert^2.
\end{align}
Furthermore if $\alpha \in \mathbb{Z}[\zeta_n]=\mathcal{O}_{\mathbb{Q}(\zeta_n)}$ satisfies the equality $\mathrm{Nr}_{\mathbb{Q}(\zeta_n)/\mathbb{Q}(\zeta_n)^+}(\alpha)=1$,  it is indeed a unit of $\mathcal{O}_{\mathbb{Q}(\zeta_n)}$ since $\overline{\alpha} \in \mathcal{O}_{\mathbb{Q}(\zeta_n)}$ is the inverse of $\alpha$ due to \eqref{eq:norm}. From these observations and Lemma~\ref{lem:unit_torsion}, one readily sees that Proposition~\ref{prop:main} is deduced from the following claim.

\begin{proposition} \label{prop:main2}
 The kernel of the homomorphism \eqref{eq:norm} is $W_{\mathbb{Q}(\zeta_n)}$.
\end{proposition}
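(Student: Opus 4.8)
The plan is to compare the free ranks of the unit groups appearing as the source and target of the norm map \eqref{eq:norm}, and to show that the kernel has rank zero, so that it collapses onto its torsion subgroup. One inclusion is immediate and I would dispose of it first: every root of unity $\zeta \in W_{\mathbb{Q}(\zeta_n)}$ has $\lvert \zeta \rvert = 1$, hence is sent to $\lvert \zeta \rvert^2 = 1$ by \eqref{eq:norm}, so $W_{\mathbb{Q}(\zeta_n)} \subseteq \ker \mathrm{Nr}_{\mathbb{Q}(\zeta_n)/\mathbb{Q}(\zeta_n)^+}$. All the work lies in the reverse inclusion.

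First I would record the ranks predicted by Dirichlet's unit theorem (Theorem~\ref{thm:Dirichlet}) together with Lemma~\ref{lem:r1r2}. Writing $K = \mathbb{Q}(\zeta_n)$ and $K^+ = \mathbb{Q}(\zeta_n)^+$, the lemma gives $(r_{K,1}, r_{K,2}) = (0, \varphi(n)/2)$ and $(r_{K^+,1}, r_{K^+,2}) = (\varphi(n)/2, 0)$, so that $\rho_K = r_{K,1} + r_{K,2} - 1 = \varphi(n)/2 - 1$ and likewise $\rho_{K^+} = \varphi(n)/2 - 1$. The decisive point is that these two ranks coincide.

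Next I would analyze the image of the norm map by restricting it to the subgroup $\mathcal{O}_{K^+}^\times \subseteq \mathcal{O}_K^\times$. Since every $u \in K^+$ is fixed by complex conjugation $\sigma_{-1}$, the explicit formula for the norm gives $\mathrm{Nr}(u) = u\,\sigma_{-1}(u) = u^2$, so the image of \eqref{eq:norm} contains the subgroup of squares $(\mathcal{O}_{K^+}^\times)^2$. By Lemma~\ref{lem:unit_torsion} we have $W_{K^+} = \{\pm 1\}$, hence $\mathcal{O}_{K^+}^\times \cong \{\pm 1\} \times \mathbb{Z}^{\rho_{K^+}}$, and its quotient by squares is finite; therefore $(\mathcal{O}_{K^+}^\times)^2$, and a fortiori the image of $\mathrm{Nr}$, has finite index in $\mathcal{O}_{K^+}^\times$, so it has full rank $\rho_{K^+}$. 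Finally I would close by a rank count: since $\mathcal{O}_K^\times$ has rank $\rho_K$ while $\mathrm{Nr}(\mathcal{O}_K^\times)$ has rank $\rho_{K^+} = \rho_K$, the kernel has rank $\rho_K - \rho_{K^+} = 0$. A finitely generated abelian group of rank $0$ is finite, so $\ker \mathrm{Nr}$ is a finite subgroup of $\mathbb{C}^\times$ contained in $K$; each of its elements has finite order, hence is a root of unity lying in $K$, giving $\ker \mathrm{Nr} \subseteq W_{\mathbb{Q}(\zeta_n)}$ and thus equality.

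I expect the only genuinely delicate step to be establishing that the norm image has full rank $\rho_{K^+}$; the squaring identity $\mathrm{Nr}\vert_{\mathcal{O}_{K^+}^\times} = (\,\cdot\,)^2$ is what makes this clean, since it lets me bound the image from below by the finite-index subgroup $(\mathcal{O}_{K^+}^\times)^2$ without ever naming fundamental units explicitly.
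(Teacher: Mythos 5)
Your proof is correct, and it reaches the conclusion by a route whose bookkeeping differs from the paper's, although both arguments pivot on the same two facts: the rank equality $\rho_{\mathbb{Q}(\zeta_n)} = \rho_{\mathbb{Q}(\zeta_n)^+} = \varphi(n)/2 - 1$ (Theorem \ref{thm:Dirichlet} combined with Lemma \ref{lem:r1r2}) and the identity $\mathrm{Nr}_{\mathbb{Q}(\zeta_n)/\mathbb{Q}(\zeta_n)^+}(u) = u^2$ for $u \in \mathcal{O}_{\mathbb{Q}(\zeta_n)^+}^\times$. The paper passes to the torsion-free quotients $\mathcal{E}_K = \mathcal{O}_K^\times / W_K$, fixes fundamental units on both sides, and encodes the induced map $\overline{\mathrm{Nr}}$ by an integer matrix $A_{\overline{\mathrm{Nr}}}$; the squaring identity becomes $A_{\overline{\mathrm{Nr}}} A_{\overline{i}} = 2 I_{\rho_n}$, whence $\det A_{\overline{\mathrm{Nr}}} \neq 0$ by multiplicativity of the determinant, and injectivity of $\overline{\mathrm{Nr}}$ follows from the determinant criterion cited from \cite{AW}. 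You avoid that apparatus entirely: the squaring identity bounds the image of \eqref{eq:norm} from below by $\bigl(\mathcal{O}_{\mathbb{Q}(\zeta_n)^+}^\times\bigr)^2$, a finite-index subgroup, so the image has full rank $\rho_{\mathbb{Q}(\zeta_n)^+}$; rank additivity in the exact sequence $0 \to \ker \mathrm{Nr} \to \mathcal{O}_{\mathbb{Q}(\zeta_n)}^\times \to \mathrm{Nr}\bigl(\mathcal{O}_{\mathbb{Q}(\zeta_n)}^\times\bigr) \to 0$ then forces $\ker \mathrm{Nr}$ to have rank $0$, hence to be finite, hence to consist of roots of unity lying in $\mathbb{Q}(\zeta_n)$, which together with the trivial inclusion $W_{\mathbb{Q}(\zeta_n)} \subseteq \ker \mathrm{Nr}$ gives equality. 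What your version buys is a coordinate-free argument: no choice of fundamental units, no matrix presentation, no appeal to the injectivity-iff-nonzero-determinant criterion --- only the standard facts that a finite-index subgroup of a finitely generated abelian group has the same rank and that rank is additive in short exact sequences. What the paper's version buys is marginally finer information: its identity pins down $\det A_{\overline{\mathrm{Nr}}}$ as a divisor of $2^{\rho_n}$, i.e., the induced map on the free quotients has cokernel of $2$-power order, a precision not needed for Proposition \ref{prop:main2}. For the statement at hand the two proofs are interchangeable, and yours is arguably the cleaner packaging of the same underlying mechanism.
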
 

In the rest of the appendix, we give the proof of Proposition~\ref{prop:main2}.

\subsection{Proof of Proposition~\ref{prop:main2}} For $K=\mathbb{Q}(\zeta_n)$ or $\mathbb{Q}(\zeta_n)^+$, let $\mathcal{E}_K$ denote the quotient group $\mathcal{O}_K^\times/W_K$; then $\mathcal{E}_K$ is a free abelian group since $W_K$ is indeed the torsion part of the finitely generated abelian group $\mathcal{O}_K^\times$. Furthermore both $\mathcal{E}_{\mathbb{Q}(\zeta_n)}$ and $\mathcal{E}_{\mathbb{Q}(\zeta_n)^+}$ are of rank $\rho_n:=\varphi(n)/2-1$ due to Theorem~\ref{thm:Dirichlet} and Lemma~\ref{lem:r1r2}. The norm map $\mathrm{Nr}_{\mathbb{Q}(\zeta_n)/\mathbb{Q}(\zeta_n)^+}$ then induces a homomorphism of free abelian groups $\overline{\mathrm{Nr}} \colon \mathcal{E}_{\mathbb{Q}(\zeta_n)}\rightarrow \mathcal{E}_{\mathbb{Q}(\zeta_n)^+}$. In order to verify Proposition~\ref{prop:main2}, it suffices to show that $\overline{\mathrm{Nr}}$ is injective. 

To study $\overline{\mathrm{Nr}}$, let us fix fundamental units $\varepsilon_1,\varepsilon_2,\dotsc,\varepsilon_{\rho_n}$ of $\mathbb{Q}(\zeta_n)$ and $\epsilon_1,\epsilon_2,\dotsc,\epsilon_{\rho_n}$ of $\mathbb{Q}(\zeta_n)^+$, respectively. We use the same symbols for their images in the torsion free quotients $\mathcal{E}_{\mathbb{Q}(\zeta_n)}$ (or $\mathcal{E}_{\mathbb{Q}(\zeta_n)^+}$) by abuse of notation. Let $A_{\overline{\mathrm{Nr}}}\in \mathrm{M}_{\rho_n}(\mathbb{Z})$ be the matrix presentation of $\overline{\mathrm{Nr}}$ with respect to these fundamental units, which is precisely defined as follows: for any $1\leq j\leq \rho_n$, let us describe $\overline{\mathrm{Nr}}(\varepsilon_j)$ by the fundamental units of $\mathbb{Q}(\zeta_n)^+$ as
\begin{align*}
 \overline{\mathrm{Nr}}(\varepsilon_j)=\epsilon_1^{a_{1j}}\epsilon_2^{a_{2j}}\cdots \epsilon_{\rho_n}^{a_{\rho_n\rho_n}} \qquad \text{for }a_1,a_2,\dotsc, a_{\rho_n}\in \mathbb{Z}.
\end{align*}
The matrix presentation $A_{\overline{\mathrm{Nr}}}$ of $\overline{\mathrm{Nr}}$ is then defined as $A_{\overline{\mathrm{Nr}}}=\bigl( a_{ij}\bigr)_{1\leq i,j\leq \rho_n}$. It is known that $\overline{\mathrm{Nr}}$ is injective if and only if the determinant $\det (A_{\overline{\mathrm{Nr}}})$ of $A_{\overline{\mathrm{Nr}}}$ is nonzero (see \cite[Proposition (3.17) (2)]{AW}). Now let $i \colon \mathbb{Q}(\zeta_n)^+\hookrightarrow \mathbb{Q}(\zeta_n)$ denote a natural inclusion, which induces a homomorphism $\overline{i}\colon \mathcal{E}_{\mathbb{Q}(\zeta_n)^+}\rightarrow \mathcal{E}_{\mathbb{Q}(\zeta_n)}$ of free abelian groups. Then the composition $\overline{\mathrm{Nr}}\circ \overline{i}$ is just the squaring map $x\mapsto x^2$ since one could calculate as
\begin{align*}
 \mathrm{Nr}_{\mathbb{Q}(\zeta_n)/\mathbb{Q}(\zeta_n)^+}\circ i(x)=x\cdot \sigma_{-1}(x)=x\cdot x=x^2 
\end{align*}
for $x\in \mathcal{O}_{\mathbb{Q}(\zeta_n)^+}^\times$. This fact is rewritten as $A_{\overline{\mathrm{Nr}}}A_{\overline{i}}=2I_{\rho_n}$, where $I_{\rho_n}$ denotes the identity matrix of degree $\rho_n$ (the matrix presentation $A_{\overline{i}}$ of $\overline{i}$ is defined similarly to $A_{\overline{\mathrm{Nr}}}$). Therefore, due to the multiplicativity of the determinant map (see \cite[Theorem (2.11)]{AW}), we have 
\begin{align*}
 \bigl(\det A_{\overline{\mathrm{Nr}}}\bigr) \bigl( \det A_{\overline{i}}\bigr)=\det (A_{\overline{\mathrm{Nr}}}A_{\overline{i}})=\det (2I_{\rho_n})=2^{\rho_n} \neq 0. 
\end{align*}
This equality implies that $\det A_{\overline{\mathrm{Nr}}}\neq 0$, as desired. 

\begin{remark}
 For an extension of number fields $K\supsetneq F$, we set
\begin{align*}
 \mathcal{O}_{K/F}^\times=\{\alpha \in \mathcal{O}_K^\times \mid \mathrm{Nr}_{K/M}(\alpha) \in W_M \; \text{for every intermediate field } F\subset M\subsetneq K\} \; \subset \mathcal{O}_K^\times
\end{align*}
and call it the {\em group of relative units} of $K$ over $F$. The quotient group $\mathcal{E}_{K/F}:=\mathcal{O}_{K/F}^\times/W_K$ is known to be a free abelian group. In Proposition~\ref{prop:main} (or Proposition~\ref{prop:main2}), we have verified that the rank of $\mathcal{E}_{\mathbb{Q}(\zeta_n)/\mathbb{Q}(\zeta_n)^+}$ equals $0$. We remark that Odai and Suzuki have determined the rank of $\mathcal{E}_{K/F}$ for general Galois extension $K/F$ in \cite{OS1,OS2}.
\end{remark}

\bibliographystyle{amsplain}

\end{document}